\renewcommand\labelenumi{(\roman{enumi})}
\renewcommand\theenumi\labelenumi
\numberwithin{equation}{section}
\theoremstyle{plain}
\newtheorem{theorem}{Theorem}[section]
\newtheorem{proposition}[theorem]{Proposition}
\newtheorem{lemma}[theorem]{Lemma}
\newtheorem{corollary}[theorem]{Corollary}
\theoremstyle{remark}
\newenvironment{remark}
{\pushQED{\qed}\remarkx}
 {\popQED\endremarkx}
\theoremstyle{definition}
\newtheorem{definition}[theorem]{Definition}
\newenvironment{example}
{\pushQED{\qed}\examplex}
 {\popQED\endexamplex}
\newcommand\calS{\mathcal{S}}
\newcommand\calH{\mathcal{H}}
\newcommand\g{\mathfrak{g}}
\DeclareMathOperator{\Lie}{Lie}
\DeclareMathOperator{\Aut}{Aut}
\DeclareMathOperator{\Stab}{Stab}
\DeclareMathOperator{\supp}{supp}
\DeclareMathOperator{\Ad}{Ad}
\newcommand{\Pro}{\mathbb{P}}
\newcommand{\E}{\mathbb{E}}
\DeclareMathOperator{\GL}{GL}
\DeclareMathOperator{\SL}{SL}
\DeclareMathOperator{\PGL}{PGL}
\DeclareMathOperator{\SO}{SO}
\DeclareMathOperator{\Orth}{O}
\DeclareMathOperator{\Gr}{Gr}
\newcommand{\pt}{\bigwedge_{\mathrm{p}}}
\newcommand{\acts}{\boldsymbol{\cdot}}
\newcommand{\leqs}{\leqslant}
\newcommand{\ren}{{\mathrm{r}}}
\newcommand\N{\mathbb{N}}
\newcommand\R{\mathbb{R}}
\newcommand\Z{\mathbb{Z}}
\newcommand{\dd}{\mathop{}\!\mathrm{d}}
\newcommand{\adm}[2]{\ensuremath{(#2{\leftarrow}#1)}}
\DeclarePairedDelimiter\br{(}{)}
\DeclarePairedDelimiter\sqbr{[}{]}
\DeclarePairedDelimiter\abs{\lvert}{\rvert}
\DeclarePairedDelimiter\norm{\lVert}{\rVert}
\providecommand\for{}
\newcommand\SetSymbol[1][]{%
\nonscript\:#1\vert
\allowbreak
\nonscript\:
\mathopen{}}
\DeclarePairedDelimiterX\set[1]{\lbrace}{\rbrace}{%
\renewcommand\for{\SetSymbol[\delimsize]}
#1
}
\newcommand{\euler}{\mathrm{e}}
\renewcommand{\epsilon}{\varepsilon}
\title[Markov random walks \& Diophantine approximation]{Markov random walks on homogeneous spaces and Diophantine approximation on fractals}
\author{Roland Prohaska}
\address{Departement Mathematik, ETH Z\"{u}rich, R\"{a}mistrasse 101, 8092 Z\"{u}rich, Switzerland}
\email{roland.prohaska@math.ethz.ch}
\author{Cagri Sert}
\address{Departement Mathematik, ETH Z\"{u}rich, R\"{a}mistrasse 101, 8092 Z\"{u}rich, Switzerland}
\curraddr{Institut f\"{u}r Mathematik, Universit\"{a}t Z\"{u}rich, Winterthurerstrasse 190, 8057 Z\"{u}rich, Switzerland}
\email{cagri.sert@math.uzh.ch}
\thanks{The second-named author was supported by SNF grants 152819 and 178958.}
\subjclass[2010]{Primary 37A50; Secondary 60G50, 37A45, 28A80}
\keywords{Random walk, homogeneous space, Markov chain, Diophantine approximation, fractal}
\date{\usdate\today}
\begin{document}

\begin{abstract}
In a first part, using the recent measure classification results of Eskin--Lindenstrauss, we give a criterion to ensure a.s.\ equidistribution of empirical measures of an i.i.d.\ random walk on a homogeneous space $G/\Gamma$. 
Employing renewal and joint equidistribution arguments, this result is generalized in the second part to random walks with Markovian dependence. 
Finally, following a strategy of Simmons--Weiss, we apply these results to Diophantine approximation problems on fractals and show that almost every point with respect to Hausdorff measure on a graph directed self-similar set is of generic type, so in particular, well approximable. 
\end{abstract}
\maketitle
\section{Introduction}\label{sec:intro}
For the introduction, let $G$ be a connected simple real Lie group and $\Gamma$ a lattice in $G$. 
Let $(Y_n)_n$ be a $G$-valued stochastic process and $x_0\in X=G/\Gamma$. 
These data define a \emph{random walk} on $X$: 
Starting at $x_0$, one consecutively applies the random group elements $Y_1$, $Y_2$, etc. 
One of the main objectives of this paper is to identify a set of conditions on the increment process $(Y_n)_n$ ensuring that the random walk trajectory
\begin{align*}
(Y_n\dotsm Y_1x_0)_n
\end{align*}
almost surely equidistributes towards the normalized Haar measure $m_X$ on $X$ for every $x_0\in X$, meaning convergence
\begin{align*}
\frac{1}{n}\sum_{k=0}^{n-1}\delta_{Y_k\dotsm Y_1x_0}\longrightarrow m_X
\end{align*}
in the weak* topology as $n\to\infty$. 
\subsection{I.I.D.\ Random Walks}\label{subsec:iid_intro}
We first consider the classical case where the increments $Y_n$ are independent and identically distributed (i.i.d.). 
Two different types of assumptions on the common distribution $\mu$ have previously been used to establish equidistribution results in this context. 
The first one concerns the algebraic structure of the support of $\mu$ and was studied by Benoist--Quint. 
A special case of their results in~\cite{BQ3} is the following.
\begin{theorem}[Benoist--Quint~\cite{BQ3}]\label{thm:BQ_intro}
Let $\mu$ be a compactly supported probability measure on $G$ and suppose that the closed subgroup $G_\calS$ generated by $\calS=\supp(\mu)$ has the property that $\Ad(G_\calS)$ is Zariski dense in $\Ad(G)$. 
Let $(Y_n)_n$ be a sequence of i.i.d.\ random variables with distribution $\mu$. 
Then for every $x_0\in X$ with infinite $G_\calS$-orbit, the random walk trajectory $(Y_n\dotsm Y_1x_0)_n$ almost surely equidistributes towards $m_X$.
\end{theorem}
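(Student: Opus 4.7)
My plan is to reduce the pointwise equidistribution claim to two inputs about $\mu$-stationary probability measures on $X$: a \emph{classification} of such measures, and a \emph{non-escape of mass} statement for the walk. This is the strategy pioneered by Benoist--Quint, and I expect the measure classification to be the main technical obstacle.

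\textbf{Measure classification.} The heart of the proof is to show that every $\mu$-stationary probability measure $\nu$ on $X$ is $G_\calS$-invariant. Granting this, simplicity of $G$ together with Zariski density of $\Ad(G_\calS)$ in $\Ad(G)$ forces $\Ad(G_\calS^\circ)=\Ad(G)$, whence every ergodic component of $\nu$ is either the Haar measure $m_X$ or the uniform measure on a single finite $G_\calS$-orbit. The proof of the $G_\calS$-invariance rests on the \emph{exponential drift} mechanism adapted by Benoist--Quint from Eskin--Margulis--Mozes to the random-walk setting: one constructs a unipotent direction in $\g$ using the Zariski density hypothesis and then propagates invariance of $\nu$ along this direction via a delicate entropy and conditional-measure analysis, itself relying on positivity of the top Lyapunov exponent and on the contraction properties of the Furstenberg boundary.

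\textbf{Tightness and conclusion.} Separately, I would establish tightness of the empirical measures $\nu_n^{x_0}:=\frac1n\sum_{k=0}^{n-1}\delta_{Y_k\dotsm Y_1 x_0}$ by constructing a proper function $V\colon X\to[1,\infty)$ satisfying a Margulis-type drift inequality $\int V(gx)\dd\mu(g)\leqs c\,V(x)+b$ for some $c<1$ and $b<\infty$; a standard supermartingale bound then forces the trajectory to spend a positive density of time in compact subsets of $X$. Given tightness, every weak* accumulation point $\nu_\infty$ of $(\nu_n^{x_0})_n$ is a.s.\ a $\mu$-stationary probability measure on $X$ (by a routine martingale argument on the Markov operator), and by the classification decomposes as a convex combination of $m_X$ with uniform measures on finite $G_\calS$-orbits. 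To eliminate the finite-orbit components I would invoke the infinite-orbit hypothesis on $x_0$: since any finite $G_\calS$-orbit $F\subset X$ is disjoint from $G_\calS\cdot x_0$, a recurrence analysis of returns of the walk to small neighbourhoods of $F$---using that the single-step transition kernel is Feller and that $F$ has a trivial $G_\calS$-equivariant neighbourhood structure---shows that such returns have vanishing asymptotic frequency a.s. Hence $\nu_\infty=m_X$ a.s., and the empirical measures equidistribute to $m_X$ as claimed.
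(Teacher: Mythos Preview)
This theorem is quoted in the paper as a result of Benoist--Quint~\cite{BQ3}; the paper does not give its own proof. The paper's contribution is the more general Theorem~\ref{thm:exp_equidist}, which recovers Theorem~\ref{thm:BQ_intro} under the slightly stronger hypothesis that $G_\calS$ admits \emph{no} finite orbit in $X$, once one checks (Proposition~\ref{prop:BQ_assumptions}) that Zariski density of $\Ad(G_\calS)$ implies uniform expansion on Grassmannians. The paper's route to measure classification (Theorem~\ref{thm:exp_stat}) goes through Eskin--Lindenstrauss~\cite{EL} rather than the exponential drift argument; the non-escape and Breiman steps are the same as in your outline.

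Your outline follows the original Benoist--Quint strategy and is broadly correct, but two steps need repair. First, the sentence ``Zariski density \dots forces $\Ad(G_\calS^\circ)=\Ad(G)$'' is false: $G_\calS$ may well be discrete (take $\mu$ finitely supported and generating a free subgroup), in which case $G_\calS^\circ$ is trivial. What Benoist--Quint actually prove is that every ergodic $\mu$-stationary probability measure on $X$ is \emph{homogeneous}, i.e.\ the Haar measure on a closed orbit $Lx$ of a closed subgroup $L\le G$ normalised by $G_\calS$; Zariski density of $\Ad(G_\calS)$ then forces $\Lie(L)$ to be an $\Ad(G)$-invariant subspace of $\g$, hence $\set{0}$ or $\g$ by simplicity, giving the finite-orbit/Haar dichotomy. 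Second, your elimination of finite-orbit components in the weak* limit is too optimistic as stated: a ``trivial $G_\calS$-equivariant neighbourhood structure'' does not by itself prevent the walk from spending a positive density of time near a finite orbit $F$ it never hits. The actual argument in~\cite{BQ1,BQ3} is another Margulis-function construction, with a proper function on $X\setminus F$ blowing up near $F$, yielding quantitative drift \emph{away} from $F$ for any starting point $x_0\notin F$.
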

The second set of assumptions involves the dynamics of the linearized random walk on the Lie algebra $\g$ of $G$. 
In~\cite{SW}, Simmons--Weiss impose the following requirements on the adjoint action of $G_\calS=\overline{\langle\supp(\mu)\rangle}$ on $\g$, phrased in terms of Oseledets subspaces (see Theorem~\ref{thm:oseledets} for their definition): 
\begin{enumerate}
\item[(I)] For every $1\le k\le \dim(G)-1$ there exists a proper non-trivial $G_\calS$-invariant subspace $W_k\subset \g^{\wedge k}$ such that, almost surely, $W_k$ trivially intersects the Oseledets subspace  $V^{\leqs 0}$ of subexponential expansion, and $W\coloneqq W_1$ is complementary to the Oseledets subspace $V^{<\max}$ of non-maximal expansion.
\item[(II)] The adjoint action of $G_\calS$ on $W$ is by similarities and satisfies
\begin{align*}
\int_G\log\norm{\Ad(g)|_W}\dd\mu(g)>0.
\end{align*}
\item[(III)] For $1\le k\le \dim(G)-1$, any non-trivial subspace $L\subset \g^{\wedge k}$ with finite orbit under $G_\calS$ intersects $W_k$ non-trivially.
\end{enumerate}
A model example to have in mind is the action of the Borel subgroup (endowed with a suitable measure) on the Lie algebra of the upper unipotent subgroup in~$\SL_2(\R)$. 

Modifying the arguments in~\cite{BQ1}, Simmons--Weiss prove the following theorem. 
For the statement, recall that a subgroup $H$ of $G$ is said to be \emph{virtually contained} in a subgroup $L$ of $G$ if $H\cap L$ has finite index in $H$. 
\begin{theorem}[Simmons--Weiss~\cite{SW}]\label{thm:SW_main}
Let $\mu$ be a compactly supported probability measure on $G$ such that the closed subgroup $G_\calS$ generated by $\calS=\supp(\mu)$ is not virtually contained in any conjugate of $\Gamma$ and suppose that conditions \textup{(I)--(III)} are satisfied. 
Let $(Y_n)_n$ be a sequence of i.i.d.\ random variables with distribution $\mu$. 
Then for every $x_0\in X$, the random walk trajectory $(Y_n\dotsm Y_1x_0)_n$ almost surely equidistributes towards $m_X$.
\end{theorem}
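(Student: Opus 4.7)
The plan is to follow the exponential drift scheme of Benoist--Quint~\cite{BQ1}, substituting their algebraic Zariski density assumption by the dynamical input (I)--(III). The argument splits naturally into three steps: (a) reduction to stationary measure classification, (b) exclusion of escape of mass, and (c) the measure classification itself.

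For step~(a), I would invoke a standard Breiman-type argument applied to continuous test functions, which shows that, almost surely, every weak* subsequential limit of the empirical measures $\frac{1}{n}\sum_{k=0}^{n-1}\delta_{Y_k\dotsm Y_1 x_0}$ is $\mu$-stationary. For step~(b), the hypothesis that $G_\calS$ is not virtually contained in any conjugate of $\Gamma$ is used to construct, in the spirit of Eskin--Margulis, a proper function on $X$ that is $\mu$-superharmonic off a compact set; uniform recurrence then forces every subsequential limit to be a probability measure.

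Step~(c) is the heart of the proof. Given an ergodic $\mu$-stationary probability measure $\nu$ on $X$, I would apply the Oseledets theorem to the adjoint cocycle over the $\mu$-walk. Condition~(I) places the deterministic subspace $W\subset\g$ transverse to the Oseledets subspace $V^{<\max}$ of non-maximal expansion, so $W$ is expanded at the top Lyapunov rate. Condition~(II) refines this by providing positive expansion on $W$ via similarities, so that generic nearby points of $\nu$ can be shifted by vectors in $W$ that grow at a definite exponential rate; this is precisely the input needed to run the Benoist--Quint exponential drift argument and produce $\nu$-invariance under a one-parameter subgroup tangent to $W$. At each exterior power level $1\leqs k\leqs\dim(G)-1$, condition~(III) is invoked to eliminate intermediate algebraic subspaces that could obstruct the Furstenberg-type elimination at the core of the drift. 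The resulting extra invariance of $\nu$ is then fed into Ratner's theorem, forcing $\nu$ to be homogeneous; combined with the non-virtual-containment hypothesis, this rules out every homogeneous possibility other than $m_X$, completing the classification.

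The hard part will be adapting exponential drift without Zariski density of $\Ad(G_\calS)$: the algebraic recurrence tools of \cite{BQ1} are unavailable and must be replaced throughout by the structural information coming from the Oseledets decomposition together with (I) and (III). A further subtlety is that (II) only controls a single expanding direction, so the Furstenberg-type elimination at higher exterior powers will depend in an essential way on (III) to dispose of bad $G_\calS$-invariant subspaces of $\g^{\wedge k}$.
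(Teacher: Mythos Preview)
Your proposal correctly sketches the original Simmons--Weiss argument from~\cite{SW}: re-run the Benoist--Quint exponential drift, replacing Zariski density throughout by the dynamical input (I)--(III), then feed the resulting unipotent invariance into Ratner. One minor slip: in step~(b) the non-virtual-containment hypothesis is not what drives the Eskin--Margulis Lyapunov function construction; the latter uses the expansion conditions (so that their ``condition~(A)'' holds in the relevant representations). Non-virtual-containment enters only in step~(c), to exclude finite-orbit measures after the classification.

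The paper, however, does not reprove Theorem~\ref{thm:SW_main} along these lines. It recovers it as a special case of Theorem~\ref{thm:exp_equidist} via two observations: Proposition~\ref{prop:SW_assumptions} shows that (I)--(III), and in fact the weaker (I$'$)--(III$'$), already force $\Ad_*\mu$ to be \emph{uniformly expanding on Grassmannians}; and Theorem~\ref{thm:exp_stat}, which invokes the Eskin--Lindenstrauss classification~\cite{EL} as a black box, says that under uniform expansion the only non-atomic stationary measure is $m_X$. The three-step skeleton (Breiman reduction, Eskin--Margulis non-escape, classification) is the same as yours, but the classification step is outsourced to~\cite{EL} rather than carried out by hand via exponential drift. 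What the paper's route buys: it isolates a single dynamical condition (uniform expansion on Grassmannians) that simultaneously covers the Benoist--Quint and Simmons--Weiss settings, it weakens compact support to finite exponential moments, and it relaxes (I)--(III) to (I$'$)--(III$'$). What your route buys: it is self-contained modulo~\cite{BQ1} and does not rely on the substantially heavier machinery of~\cite{EL}.
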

Note that the virtual containment condition in the above theorem is equivalent to saying that there do not exist finite $G_\calS$-orbits in $X$.

Simmons--Weiss' conditions (I) \& (III) and Benoist--Quint's assumption of Zariski density of $\Ad(G_\calS)$ in the simple group $\Ad(G)$ are mutually exclusive. 
However, what the two settings have in common is that both imply what we shall call \emph{uniform expansion on Grassmannians} (see~\S\ref{subsec:grass}): 
For $1\le k\le \dim(G)-1$ and every non-zero pure wedge product $v=v_1\wedge \dots \wedge v_k$ in $\g^{\wedge k}$, almost surely, 
\begin{align}\label{exp}
\liminf_{n\to\infty}\tfrac{1}{n}\log\norm{\Ad^{\wedge k}(Y_n\dotsm Y_1)v}>0.
\end{align}
Elaborating on the recent measure classification results of Eskin--Lindenstrauss in~\cite{EL}, we show that this expansion property is sufficient to guarantee almost sure equidistribution. 
Moreover, their work allows replacing compact support of $\mu$ by \emph{finite exponential moments} in $\g$, meaning that $N(g)=\max(\norm{\Ad(g)},\norm{\Ad(g)^{-1}})$ satisfies
\begin{align*}
\int_GN(g)^\delta\dd\mu(g)<\infty
\end{align*}
for some $\delta>0$. 
We prove the following. 
\begin{theorem}\label{thm:intro1}
Let $\mu$ be a probability measure on $G$ with finite exponential moments in $\g$ such that the closed subgroup $G_\calS$ generated by $\calS=\supp(\mu)$ is not virtually contained in any conjugate of $\Gamma$. 
Suppose that the i.i.d.\ process $(Y_n)_n$ with common law $\mu$ is uniformly expanding on Grassmannians. 
Then for every $x_0\in X$, the random walk trajectory $(Y_n\dotsm Y_1x_0)_n$ almost surely equidistributes towards $m_X$. 
\end{theorem}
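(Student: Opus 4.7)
The plan is to identify all possible weak* limits of the empirical measures $\nu_n^\omega \coloneqq \frac{1}{n}\sum_{k=0}^{n-1}\delta_{Y_k\dotsm Y_1 x_0}$ and show that $m_X$ is the only one, following the classical three-step program: tightness of $(\nu_n^\omega)$, $\mu$-stationarity of the weak* limits, and measure classification.

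For tightness, I would construct a proper positive Margulis-type drift function $u\colon X\to[1,\infty]$ satisfying a Foster--Lyapunov inequality $(P_\mu u)(x)\leqs a\,u(x)+b$ for constants $a<1$ and $b>0$, where $P_\mu f(x)=\int_G f(gx)\dd\mu(g)$ is the Markov operator. Under uniform expansion on Grassmannians~\eqref{exp} and finite exponential moments in $\g$, such a function is furnished by the Eskin--Lindenstrauss framework of~\cite{EL}: concretely, one takes a small negative power of a suitable height function on $X$, uses~\eqref{exp} to produce the contraction factor $a<1$ via the relationship between cusp excursions and shortest vectors in exterior powers of $\g$, and controls the heavy-tail contributions from the noncompact support of $\mu$ by the exponential moment. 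Iterating the drift inequality and applying Markov's inequality give $\sup_n\E[u(Y_n\dotsm Y_1 x_0)]<\infty$, hence almost-sure tightness of $(\nu_n^\omega)_n$ for every fixed $x_0$.

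Next, a standard telescoping comparison $\nu_n^\omega=(Y_1(\omega))_*\nu_{n-1}^{T\omega}+O(1/n)$ on the shift space $(G^{\N},\mu^{\N},T)$ shows that almost surely every weak* subsequential limit of $(\nu_n^\omega)$ is a $\mu$-stationary probability measure. I would then invoke the Eskin--Lindenstrauss measure classification~\cite{EL}, which under uniform expansion on Grassmannians and finite exponential moments asserts that every ergodic $\mu$-stationary probability measure on $X$ is either $m_X$ or supported on a finite $G_\calS$-orbit. The hypothesis that $G_\calS$ is not virtually contained in any conjugate of $\Gamma$ excludes finite $G_\calS$-orbits in $X$, so ergodic decomposition forces every $\mu$-stationary probability measure to equal $m_X$. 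Uniqueness of the limit combined with tightness then gives $\nu_n^\omega\to m_X$ in the weak* topology almost surely, for every $x_0\in X$.

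The principal obstacle is the tightness step: building a Margulis drift function under the weaker finite-exponential-moment hypothesis (rather than compact support as in Simmons--Weiss) requires a delicate balance between the expansion rate provided by~\eqref{exp} and the moment exponent $\delta$ when choosing the power of the height function. A secondary point is extracting the uniform-in-$v$ expansion actually required by the classification theorem from the pointwise statement~\eqref{exp}; this is exactly what the terminology ``uniform expansion on Grassmannians'' is designed to encode, and one must verify that it meshes with the precise hypotheses of~\cite{EL}.
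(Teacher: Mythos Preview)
Your three-step program---tightness via a Margulis/Lyapunov drift function, stationarity of weak* limits, then measure classification---is exactly the paper's approach (Theorem~\ref{thm:exp_equidist}). Two small corrections are worth flagging. First, the drift function under uniform expansion and finite exponential moments is supplied by Eskin--Margulis~\cite{EM}, not~\cite{EL}; the paper's Proposition~\ref{prop:non_escape} verifies that their ``condition~(A)'' follows from uniform expansion on Grassmannians. Second, the raw output of~\cite{EL} is not the clean dichotomy ``$m_X$ or finite orbit'' you state: their Theorem~1.7 (which only uses expansion on $\g$, not on the higher exterior powers) yields that an ergodic stationary measure is an average of translates of an $H$-homogeneous measure for some closed $H\le G$ of positive dimension. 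The paper upgrades this to the dichotomy in Theorem~\ref{thm:exp_stat} via a separate Poincar\'e recurrence argument on $\g^{\wedge k}$, and it is precisely here---not in meshing with the hypotheses of~\cite{EL}---that expansion on the higher Grassmannians is actually used.
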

We will establish this result in the slightly more general form of Theorem~\ref{thm:exp_equidist}. 
The proof breaks down into the two usual steps: 
\begin{itemize}
\item Classification of stationary measures (Theorem~\ref{thm:exp_stat}): 
This step essentially follows from the work of Eskin--Lindenstrauss~\cite{EL}, but an additional argument is required to upgrade their classification to the statement we need, namely that the only non-atomic $\mu$-stationary probability measure on $X$ is the Haar measure $m_X$.
\item Ruling out escape of mass (Proposition~\ref{prop:non_escape}): 
Here the key ingredient is Eskin--Margulis' work on non-divergence~\cite{EM}, which we exploit along the same lines as in the proof of \cite[Theorem~2.1]{SW}. 
\end{itemize}

As one of the consequences of Theorem~\ref{thm:intro1}, we show that assumptions (I)--(III) above can be relaxed to the following two conditions: 
\begin{enumerate}\label{relaxed_conditions}
\item[(I')] For every $1\le k\le \dim(G)-1$ there exists a proper non-trivial $G_\calS$-invariant subspace $W_k\subset \g^{\wedge k}$ such that, almost surely, $W_k$ trivially intersects the Oseledets subspace  $V^{\leqs 0}$ of subexponential expansion.
\item[(III')] For $1\le k\le \dim(G)-1$, any non-trivial $G_\calS$-invariant subspace $L$ of $\g^{\wedge k}$ intersects $W_k$ non-trivially. 
\end{enumerate}
A simple example in which (I') and (III') hold whereas (I)--(III) fail is given by $G=\SL_3(\R)$, $\Gamma=\SL_3(\Z)$ and $\mu=\tfrac13(\delta_{g_1}+\delta_{g_2}+\delta_{g_3})$ for the matrices
\label{intro_example}
\begin{align*}
g_1=\begin{pmatrix}3&&\\&2&\\&&1/6\end{pmatrix},\,g_2=\begin{pmatrix}3&&1\\&2&\\&&1/6\end{pmatrix}\text{ and }g_3=\begin{pmatrix}3&&\\&2&1\\&&1/6\end{pmatrix}.
\end{align*}
We postpone the justification to~\S\ref{subsec:grass}. 
\subsection{Markov Random Walks}\label{subsec:markov_intro}
The properties of random products $Y_n\dotsm Y_1$ of elements of $G$ are much less understood when the increments $Y_n$ do not form an i.i.d.\ process. 
The problem of equidistribution on homogeneous spaces, for instance, has not been studied beyond the case of i.i.d.\ random walks. 
In this article, we investigate this problem for Markovian increment processes and, as our main result, obtain equidistribution results analogous to the i.i.d.\ case. 
\begin{theorem}\label{thm:intro2}
Let $(Y_n)_n$ be an irreducible Markov chain on a finite set $\calS\subset G$ that is uniformly expanding on Grassmannians in the sense of \eqref{exp} and such that for every $x\in X$ the random orbit $\set{Y_n\dotsm Y_1x\for n\in\N}$ is almost surely infinite. 
Then for every $x_0\in X$, the random walk trajectory $(Y_n\dotsm Y_1x_0)_n$ almost surely equidistributes towards $m_X$. 
\end{theorem}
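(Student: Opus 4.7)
The plan is to reduce the Markov setting to the i.i.d.\ one by a renewal construction at return times, apply Theorem~\ref{thm:intro1} to the induced i.i.d.\ walk, and then pass back to the full trajectory by a joint equidistribution argument. Fix a state $s_0\in\calS$ and condition on starting the chain at $s_0$ (the transient segment until the first hit of $s_0$ is a.s.\ finite and does not affect Ces\`aro averages). Let $0=\tau_0<\tau_1<\dots$ be the successive return times of $(Y_n)_n$ to $s_0$, set $L_k=\tau_k-\tau_{k-1}$, $\xi_k=(Y_{\tau_{k-1}+1},\dots,Y_{\tau_k})$, and $Z_k=Y_{\tau_k}\dotsm Y_{\tau_{k-1}+1}$. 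By the strong Markov property, $(L_k,\xi_k)_k$ is i.i.d., and $L_1$ has exponential tails since $\calS$ is finite and the chain irreducible.

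I would next verify the hypotheses of Theorem~\ref{thm:intro1} for the i.i.d.\ sequence $(Z_k)_k$ with common law $\mu_{s_0}$. Finite exponential moments follow from $N(Z_1)\leq M^{L_1}$ with $M=\max_{s\in\calS}N(s)$, combined with the exponential tail of $L_1$. Uniform expansion on Grassmannians transfers from the Markov-level hypothesis: writing
\begin{align*}
\frac{1}{n}\log\norm{\Ad^{\wedge k}(Z_n\dotsm Z_1)v}=\frac{\tau_n}{n}\cdot\frac{1}{\tau_n}\log\norm{\Ad^{\wedge k}(Y_{\tau_n}\dotsm Y_1)v},
\end{align*}
the renewal LLN $\tau_n/n\to\E[L_1]>0$ together with the fact that any a.s.\ infinite random subsequence inherits a positive $\liminf$ from~\eqref{exp} delivers uniform expansion for $(Z_k)_k$. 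The no-virtual-containment condition for $G_{\supp(\mu_{s_0})}$ would be extracted from the Markov-orbit hypothesis using irreducibility: one shows that a hypothetical finite $G_{\supp(\mu_{s_0})}$-orbit would propagate to a finite $G_\calS$-orbit, contradicting the fact that every Markov orbit is a.s.\ infinite. Theorem~\ref{thm:intro1} then produces almost sure equidistribution of $x_k\coloneqq Z_k\dotsm Z_1x_0$ towards $m_X$ for every $x_0\in X$.

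To bridge back to the full trajectory, I would establish a joint equidistribution of $(x_{k-1},\xi_k)$ in $X\times\Omega$ against $m_X\otimes\nu$, with $\nu$ the law of $\xi_1$ and $\Omega$ the space of excursion paths. Since $\xi_k$ is independent of $\mathcal{F}_{k-1}=\sigma(\xi_1,\dots,\xi_{k-1})$ and $x_{k-1}$ is $\mathcal{F}_{k-1}$-measurable, for test functions of product type $f(x)g(\xi)$ the differences $f(x_{k-1})g(\xi_k)-f(x_{k-1})\int g\dd\nu$ form a bounded martingale difference sequence whose Ces\`aro averages vanish a.s.; combined with the i.i.d.\ equidistribution of $(x_{k-1})_k$, this gives the joint statement for product functions, which extends by a standard density argument. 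Writing
\begin{align*}
\frac1n\sum_{m=0}^{n-1}f(Y_m\dotsm Y_1x_0)=\frac{N(n)}{n}\cdot\frac1{N(n)}\sum_{k=1}^{N(n)}F_f(x_{k-1},\xi_k)+o(1),
\end{align*}
with $N(n)=\#\{k:\tau_k\leq n\}$ and $F_f(x,\xi)$ summing $f$ along the excursion trajectory initiated at $x$, the renewal LLN $N(n)/n\to 1/\E[L_1]$ together with the identity $\int F_f\dd(m_X\otimes\nu)=\E[L_1]\int f\dd m_X$ (a consequence of $G$-invariance of $m_X$) delivers the Ces\`aro equidistribution of $(Y_m\dotsm Y_1x_0)_m$.

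The most delicate point, I expect, is verifying the no-virtual-containment condition for $\mu_{s_0}$: since $\supp(\mu_{s_0})$ consists of loop products at $s_0$ rather than of individual chain elements, a careful combinatorial argument using positivity of the transition probabilities is needed to rule out finite orbits at the i.i.d.\ level. A secondary technical issue is the unboundedness of $F_f$ in $\xi$, whose $\nu$-integrability requires the exponential tail of $L_1$ and whose martingale treatment should be carried out after a truncation. The time-change via the renewal theorem and the extraction of the $o(1)$ remainder are otherwise essentially mechanical once these pieces are in place.
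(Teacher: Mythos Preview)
Your proposal is correct and follows essentially the same route as the paper: renewal at return times to $s_0$, verification of the hypotheses of Theorem~\ref{thm:intro1} for the block law $\mu_{s_0}$ (exponential moments from exponential tails of return times, expansion via $\tau_n/n\to\E[L_1]$, and the no-virtual-containment step, which the paper isolates as Lemma~\ref{lem:inf_orbits}), followed by a joint equidistribution plus renewal argument to pass from the block walk back to the full trajectory (the paper's Proposition~\ref{prop:markov_equidist_bootstrap} and Lemma~\ref{lem:equidist_reduction}(i), including the truncation in the excursion length that you flag). The paper packages these pieces into Lemma~\ref{lem:markov_expansion_equivalence}, Lemma~\ref{lem:inf_orbits}, and Proposition~\ref{prop:markov_equidist_bootstrap} and then invokes them via Theorem~\ref{thm:exp_markov_equidist}, but the underlying argument is the one you outline.
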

Note that when $\mu$ is finitely supported, $G_\calS$ denotes the closed subgroup of $G$ generated by $\calS=\supp(\mu)$, and the $Y_n$ are i.i.d.\ with distribution $\mu$, the random orbit $\set{Y_n\dotsm Y_1x\for n\in\N}$ is almost surely infinite if and only if the orbit $G_\calS x$ is infinite. 
Hence, the condition on almost surely infinite orbits in Theorem~\ref{thm:intro2} is a natural analogue of the virtual containment condition in Theorem~\ref{thm:intro1}. 

The proof of Theorem~\ref{thm:intro2} relies on Theorem~\ref{thm:intro1} and a renewal argument. 
Indeed, our strategy of proof is to apply Theorem~\ref{thm:intro1} to the blocks $Z_n=Y_{\tau_g^{n+1}-1}\dotsm Y_{\tau_g^n}$ between consecutive hitting times $\tau_g^n$ and $\tau_g^{n+1}$ of a fixed state $g\in G$, which are i.i.d.\ by the Markov property of $(Y_n)_n$, and then deal with the excursions between such hitting times. 
By the strong recurrence properties of finite-state Markov chains, these excursions are rather short most of the time, so that their contribution can be precisely controlled thanks to a joint equidistribution phenomenon (see~\S\ref{subsec:markov_equidist_bootstrap}). 
Note that for this approach to work, it is crucial that Theorem~\ref{thm:intro1} does not require $\mu$ to have compact support as in Theorems \ref{thm:BQ_intro} and \ref{thm:SW_main}. 

A concrete corollary of the previous result is the following Markovian version of \cite[Theorem~1.1]{SW}. 
\begin{corollary}\label{cor:intro4}
Let $G=\SL_{d+1}(\R)$, $\Gamma=\SL_{d+1}(\Z)$, and $X=G/\Gamma$. For $0\le i\le r$ let $c_i>1$ be real numbers, $y_i\in \R^d$ vectors such that $y_0=0$ and $y_1,\dots,y_r$ span $\R^d$, $O_i\in \SO_d(\R)$, and set
\begin{align*}
g_i=\begin{pmatrix}c_iO_i&y_i\\0&c_i^{-d}\end{pmatrix}\in G.
\end{align*}
Then for any irreducible Markov chain $(Y_n)_n$ on $\calS=\set{g_0,\dots,g_r}\subset G$ with one universally accessible state \textup{(}i.e.\ a state that can be reached in a single step from everywhere with positive probability\textup{)} and any starting point $x_0\in X$, the random walk trajectory $(Y_n\dotsm Y_1x_0)_n$ almost surely equidistributes towards $m_X$.
\end{corollary}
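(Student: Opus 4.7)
The plan is to apply Theorem~\ref{thm:intro2}, which reduces the problem to verifying (a) that the Markov chain $(Y_n)_n$ is uniformly expanding on Grassmannians in the sense of \eqref{exp}, and (b) that for every $x \in X$ the random orbit $\set{Y_n\dotsm Y_1 x\for n\in\N}$ is almost surely infinite.

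For (a), the crucial structural observation is that the $g_i$ share a common block-triangular form: each preserves the subspace $\R^d = \mathrm{span}(e_1,\dots,e_d) \subset \R^{d+1}$, acting on it by the similarity $c_iO_i$ with $c_i>1$ and on the complementary line by $c_i^{-d}$. Consequently, on $\g = \mathfrak{sl}_{d+1}(\R)$, the abelian subalgebra $W$ of matrices supported in the upper-right column is invariant under $\Ad(g_i)$ for every $i$, and a short computation shows that $\Ad(g_i)|_W$ is the similarity $c_i^{d+1}O_i$. Thus $W$ expands deterministically along every sample path at rate at least $(d+1)\log c_{\min}$, where $c_{\min}=\min_i c_i$. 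To propagate this to uniform expansion on all of $\g^{\wedge k}$, I would form the $\Gamma_\calS$-invariant subspaces $W_k\subset\g^{\wedge k}$ by exterior multiplication with $W$, playing the role of the subspaces in conditions~(I') and (III'), and then use the spanning hypothesis on $y_1,\dots,y_r$ together with an Oseledets-type analysis adapted to the Markov chain to show that the random subspace $V^{\leqs 0}(\omega)$ of subexponential expansion has positive codimension and varies enough with the sample path so that $\Pro\sqbr*{v\in V^{\leqs 0}(\omega)}=0$ for any fixed pure wedge $v$.

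For (b), I would first invoke the iid analysis of Simmons--Weiss (used in the proof of their Theorem~1.1) to conclude that the spanning hypothesis on the $y_i$ forces the semigroup $\Gamma_\calS$ generated by $\calS$ to have infinite orbit through every $x\in X$. The irreducibility of the chain combined with the presence of a universally accessible state then guarantees that every finite word in $\calS$ is realized as a sub-word of $(Y_n)_n$ with positive probability from any initial distribution; a Borel--Cantelli argument then yields that the random orbit almost surely visits infinitely many distinct elements of $\Gamma_\calS x$.

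I expect the main obstacle to lie in step~(a): while the deterministic expansion on $W$ is immediate from the block structure, extending the Oseledets non-concentration analysis from the iid setting to the Markovian one requires careful study of the stationary distribution of the joint chain on $\calS\times\Gr(\g^{\wedge k})$ and showing that this measure assigns zero mass to any fixed Grassmannian point; the rotational components $O_i$ and the spanning $y_i$ should provide the mixing needed for this non-concentration. Step~(b), by contrast, should be a rather soft consequence of the iid orbit analysis of Simmons--Weiss combined with the strong recurrence properties of irreducible finite-state Markov chains.
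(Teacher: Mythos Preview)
Your overall plan—verify the two hypotheses of Theorem~\ref{thm:intro2}—is sound, but your route through step (a) misses the simplification that the paper exploits and heads into territory you yourself flag as the main obstacle. The paper never attempts a direct Markov Oseledets or non-concentration analysis. Instead it passes to the \emph{renewal measures} $\mu_e$ (the laws of the i.i.d.\ blocks between successive returns to a fixed state $e$) and applies the existing i.i.d.\ criterion of Simmons--Weiss, \cite[Theorem~6.4]{SW}, for measures in $(M,N)$-upper block form. This is packaged as Proposition~\ref{prop:block_markov}: one only needs $\int t(g)\dd\mu_e(g)>0$, which follows from $c_i>1$ and~\eqref{stat_dist}, together with the inclusion $V^+\subset\Lie(H_{e_0})$ for some state $e_0$. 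Once that is checked, \cite[Theorem~6.4]{SW} gives conditions (I)--(III) for each $\mu_e$, hence $e$-expansion via Proposition~\ref{prop:SW_assumptions} and Lemma~\ref{lem:markov_expansion_equivalence}. No analysis of the Markov Oseledets filtration is required.

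The role of the universally accessible state is also different from what you suggest. You invoke it only in step (b), for a Borel--Cantelli argument. In the paper it enters \emph{algebraically}, via Lemma~\ref{lem:Z_closures_conjugate}(v): the existence of such a state forces all Zariski closures $H_e$ to coincide with $H_\calS$, so to verify $V^+\subset\Lie(H_{e_0})$ it suffices to prove $V^+\subset\Lie(H_\calS)$. This is then done by the concrete limit computation from the proof of \cite[Theorem~1.1]{SW}: writing $g_i=u_i'a_ik_i$, one conjugates by powers of $g_0$ to isolate $u_i'\in H_\calS$, and then the spanning hypothesis on $y_1,\dots,y_r$ yields $V^+\subset\Lie(H_\calS)$. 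Without the universally accessible state one would only know that the $H_e$ are mutually conjugate (Lemma~\ref{lem:Z_closures_conjugate}(ii)), and a single renewal measure $\mu_e$ might fail to see enough of $\calS$ to contain $V^+$ in its Zariski closure.

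Step (b) then comes for free: \cite[Theorem~6.4]{SW} already asserts that $G_e$ is not virtually contained in any conjugate of $\Gamma$, which via Lemma~\ref{lem:inf_orbits} gives the infinite random orbit condition. No separate Borel--Cantelli argument is needed.
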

We remark that, in this corollary, the assumption of having a universally accessible state plays the role of an aperiodicity condition, which allows deducing the dynamical property of uniform expansion on Grassmannians from the algebraic structure of the set $\calS$. 
Without such a condition, excursions from a fixed state might fail to witness this structure in full, and degenerate behavior may occur. 
\subsubsection{Beyond Markov}
An advantage of an expansion condition such as \eqref{exp} over one involving the measure $\mu$ is that it puts the i.i.d.\ case on equal footing with arbitrary increment processes. 
Consequently, the formulation of Theorem~\ref{thm:intro2} suggests the natural question of equidistribution for more general, say ergodic and stationary, increment processes $(Y_n)_n$ on $G$. 
For example, one might expect Theorem~\ref{thm:intro2} to hold true when, instead of being a Markov process, the distribution of $(Y_n)_n$ is a Gibbs measure of some H\"older continuous potential on $\calS^\N$. 
While our approach in this article can handle locally constant potentials (corresponding to generalized Markov measures), the general question remains open.
\subsection{Applications to Diophantine Approximation on Fractals}\label{subsec:fractal_intro}
Recall that by a classical theorem of Dirichlet, for any $v\in\R^d$, there exist infinitely many pairs $(p,q)\in\Z^d\times\N$ such that $\norm{qv-p}_\infty\le q^{-1/d}$. 
If for some constant $c\in(0,1)$ there are only finitely many solutions $(p,q)\in\Z^d\times \N$ to the stronger inequality $\norm{qv-p}_\infty\le cq^{-1/d}$, then $v$ is said to be \emph{badly approximable}, and \emph{well approximable} otherwise. 
The set of badly approximable points in $\R^d$ is of zero Lebesgue measure (but of full Hausdorff dimension). 

In the study of Diophantine approximation on fractals, one is in particular interested in Diophantine properties of typical points of a fractal in $\R^d$ with respect to natural measures on that fractal; most prominently, Hausdorff measure. 
In the absence of algebraic obstructions, it is generally expected that these properties are the same as for Lebesgue-typical points of the ambient space $\R^d$. 
However, for badly approximable points this analogy remained poorly understood after the initial results of Einsiedler--Fishman--Shapira~\cite{EFS} that concerned a somewhat restricted class of fractals. 

The recent breakthrough of Simmons--Weiss~\cite{SW} contributed considerably to this problem, showing in particular that for an irreducible iterated function system (IFS) $\Phi=\set{\phi^{(1)},\dots,\phi^{(k)}}$ of contracting similarities of $\R^d$ and any Bernoulli measure $\beta$ on $\Phi^\N$ of full support, almost every point of the associated self-similar fractal is of generic type, where \enquote{almost every} is understood with respect to the pushforward of $\beta$ by the \emph{natural projection}
\begin{align*}
\Pi\colon\Phi^\N\to\R^d,\,(\phi_m)_m\mapsto\lim_{n\to\infty}\phi_0\dotsm\phi_{n-1}(x),
\end{align*}
where $x\in\R^d$ is arbitrary. 
Thanks to a classical result of Hutchinson~\cite{Hut}, this implies the same conclusion with respect to Hausdorff measure whenever the IFS satisfies the open set condition. 
Here, a point being of \enquote{generic type} intuitively means that, from a Diophantine approximation perspective, it behaves like a Lebesgue-typical point in $\R^d$. 
In particular, such points are well approximable. 
When $d=1$, this property also implies that the blocks of the continued fraction expansion are distributed according to Gauss measure. 
For the precise definition see~\S\ref{subsec:dioph}. 

In our main applications below, following the strategy in~\cite{SW} and making use of our Markovian equidistribution results, we extend the aforementioned results of~\cite{SW} in two directions. 

The first one concerns measures that are not necessarily Bernoulli. 
For the statement, recall that an IFS $\Phi=\set{\phi^{(1)},\dots,\phi^{(k)}}$ of contracting similarities of $\R^d$ is said to be \emph{irreducible} if there does not exist a proper affine subspace $V\subset\R^d$ that is preserved by all $\phi^{(i)}$, and that its \emph{attractor} is the unique non-empty compact set $K\subset\R^d$ with $K=\bigcup_{i=1}^k\ \phi^{(i)}(K)$. 
Equivalently, the attractor $K$ can be written as the image of $\Phi^\N$ under the natural projection $\Pi$ defined above. 
\begin{theorem}\label{thm:intro3}
Let $\Phi$ be an irreducible IFS of contracting similarities of $\R^d$, $K$ the associated attractor, and $\Pi\colon\Phi^\N\to\R^d$ the natural projection. 
Then for any Markov measure $\Pro$ on $\Phi^\N$ of full support, $\Pi_*\Pro$-a.e.\ point on $K$ is of generic type, so in particular, well approximable. 
\end{theorem}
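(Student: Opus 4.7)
The plan is to reduce the theorem to Corollary~\ref{cor:intro4} and then translate the resulting homogeneous-space equidistribution into Diophantine genericity via the dictionary of Simmons--Weiss~\cite{SW}.

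First, I would associate to each contracting similarity $\phi^{(i)}(x) = r_i O_i' x + z_i$ (with $r_i \in (0,1)$ and $O_i' \in \Orth_d(\R)$) a matrix
\begin{align*}
g_i = \begin{pmatrix} c_i O_i & y_i \\ 0 & c_i^{-d} \end{pmatrix} \in \SL_{d+1}(\R)
\end{align*}
of the form used in Corollary~\ref{cor:intro4}, namely $c_i = r_i^{-1} > 1$, $O_i = (O_i')^{-1}$, and $y_i = -c_i O_i z_i$ (issues of orientation are harmless and can be dealt with by passing to a double cover if needed). After conjugating $\Phi$ by a translation so that $\phi^{(0)}$ fixes the origin, we have $y_0 = 0$; moreover, irreducibility of $\Phi$ forces $y_1,\dots,y_r$ to span $\R^d$, since otherwise the proper subspace $\mathrm{span}(y_1,\dots,y_r)$ (which is invariant under each $O_i$, as is visible from the shape of $g_i$) would yield a proper affine subspace of $\R^d$ preserved by the whole IFS.

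Second, the full-support hypothesis on the Markov measure $\Pro$ implies that every length-two cylinder in $\Phi^\N$ has positive $\Pro$-mass, hence the induced Markov chain $(Y_n)_n$ on $\calS = \{g_0,\dots,g_r\}$ has strictly positive transition matrix. In particular, it is irreducible and every state is universally accessible, so the hypotheses of Corollary~\ref{cor:intro4} are met. We conclude that for every starting point $x_0 \in X = \SL_{d+1}(\R)/\SL_{d+1}(\Z)$, the trajectory $(Y_n\dotsm Y_1 x_0)_n$ almost surely equidistributes towards the Haar measure $m_X$.

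Finally, to translate this into the generic-type property of $v = \Pi(\omega) \in K$, I would follow the argument of~\cite{SW}. The point $v$ lifts to the unipotent coset $u_v\Gamma \in X$, where $u_v = \bigl(\begin{smallmatrix} I_d & v \\ 0 & 1 \end{smallmatrix}\bigr)$, and its Diophantine behavior is governed by the orbit $(a_t u_v \Gamma)_{t > 0}$ under the standard diagonal one-parameter subgroup $a_t = \mathrm{diag}(e^t I_d, e^{-dt})$. A direct computation from the formula for $g_i$ shows that $g_i u_v = a_{\log c_i}\, u_{\phi^{(i)-1}(v)/c_i}\, k_i$ with $k_i$ a rotation, and iterating yields $Y_n\dotsm Y_1\cdot u_v \equiv a_{t_n} u_{v_n} k_n \pmod{\Gamma}$ for $t_n = -\log(r_{\phi_0}\dotsm r_{\phi_{n-1}})$, bounded translation parameters $v_n$, and rotations $k_n$. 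Almost sure equidistribution of the random walk starting at $u_v\Gamma$ then forces the diagonal orbit to equidistribute along the random times $t_n$, which is exactly the generic-type condition. The main delicate point, and in my view the principal obstacle, is that $u_v$ depends on the same realization $\omega$ that drives the walk, so the conclusion of Corollary~\ref{cor:intro4} (which is almost sure in the path for \emph{each fixed} $x_0$) must be coupled along the diagonal $\omega \mapsto u_{\Pi(\omega)}\Gamma$. As in~\cite{SW}, I would handle this by exploiting the self-similar cell decomposition of $K$: conditionally on the first $n$ symbols, the point $v$ lies inside the cell $\phi^{(i_0)}\dotsm\phi^{(i_{n-1})}(K)$, and combining the strong Markov property of $(Y_n)_n$ with the pointwise-in-$x_0$ equidistribution and the continuity of $v \mapsto u_v$ yields joint equidistribution by a Fubini-type argument.
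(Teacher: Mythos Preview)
Your reduction to Corollary~\ref{cor:intro4} breaks down at the very first step: irreducibility of the IFS does \emph{not} imply that the vectors $y_1,\dots,y_r$ span $\R^d$. Your claim that $\mathrm{span}(y_1,\dots,y_r)$ is invariant under each $O_i$ ``as is visible from the shape of $g_i$'' is simply false---nothing about the block form of $g_i$ forces any relation between $O_i$ and $y_j$ for $j\neq i$. A concrete counterexample in $\R^2$: take $\phi^{(0)}(x)=\tfrac12 x$ and $\phi^{(1)}(x)=\tfrac12 R_\theta x+(1,0)$ with $\theta/\pi$ irrational. After translating so that $\phi^{(0)}$ fixes the origin, $y_0=0$ and $y_1$ spans only a line; yet the IFS is irreducible, since the only candidate for an invariant affine line would have to pass through $0$, contain $(1,0)$, and be $R_\theta$-invariant, which is impossible. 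So Corollary~\ref{cor:intro4} is too crude a tool here: its spanning hypothesis is strictly stronger than irreducibility.

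The paper avoids this by not going through Corollary~\ref{cor:intro4} at all. Instead it invokes the more flexible Proposition~\ref{prop:block_markov}, whose algebraic hypothesis is that $V^+\subset\Lie(H_{e_0})$ for \emph{some} $e_0$. This is verified (after a preliminary conjugation in $P$ arranging $H_{e_0}$ to contain a nontrivial element of $AK$) by the argument from~\cite[\S10.1]{SW}, which uses irreducibility directly rather than any spanning condition. Even granting the algebra, your sketch has a second gap: Corollary~\ref{cor:intro4} gives equidistribution of $(Y_n\dotsm Y_1 x_0)_n$ for each \emph{fixed} $x_0$, but as you yourself note, the starting point $u_{\Pi(\omega)}\Gamma$ depends on the same $\omega$. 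The paper handles this not by a ``Fubini-type argument'' but via the joint equidistribution of $(g_{\omega|_n}x_0,T^n\omega)_n$ in Theorem~\ref{thm:exp_markov_equidist}, combined with a careful tracking of the $K$-component and the identity~\eqref{magic_formula}. Finally, note that ``generic type'' is a statement about the continuous-time orbit $(a_t u_{\Pi(\omega)}\Gamma)_{t\ge 0}$, not just along the random times $t_n$; passing from one to the other is an additional (if routine) step.
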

Under a strong separation condition, the statement about well approximable points in the above theorem also follows from Simmons--Weiss' \cite[Theorem~8.4]{SW} on doubling measures. 
However, in general the measures in our theorem are not doubling on the attractor $K$, even under the open set condition; see~\cite{Yung}.

Secondly, we consider more general, no longer strictly self-similar fractals $K$. 
Given an IFS $\Phi$ of contracting similarities, these fractals are obtained as images under the natural projection $\Pi$ of \emph{sofic subshifts} of the shift space $\Phi^\N$, which are by definition continuous factors of subshifts of finite type~\cite{Wei}. 
Accordingly, we call the associated fractals \emph{sofic similarity fractals}. 

In the literature, the iterated function systems appearing in the construction of such fractals are known as \emph{graph directed IFS}, since a sofic shift can always be realized as image of the edge shift of a directed graph under a one-block factor map (see e.g.\ \cite{LM_book}). 
Each edge in the graph has as label one of the similarities in $\Phi$ and the possible paths in the graph determine the sequences appearing in the sofic shift. 
Since its introduction by Mauldin--Williams~\cite{MW}, this viewpoint has proved to be a fruitful approach and has been studied by many authors, among others Edgar--Mauldin~\cite{EdM}, Olsen~\cite{Ol}, Wang~\cite{Wa}, and Mauldin--Urbanski in their monograph~\cite{MU_book}. 
For an accessible introduction we refer to Edgar's book~\cite{Ed_book}.

The advantage of this setup over the point of view of an abstract sofic shift is that classical properties of an IFS like the open set condition or irreducibility can be expressed in a more lucid and conceptual way. 
With these notions, which will be defined in~\S\ref{subsec:gd_IFS}, we have the following result. 
\begin{theorem}\label{thm:intro4}
Let $K\subset \R^d$ be a sofic similarity fractal constructed by a graph directed IFS of contracting similarities that is irreducible and satisfies the open set condition. 
Let $s\ge 0$ denote the Hausdorff dimension of $K$. 
Then almost every point on $K$ with respect to $s$-dimensional Hausdorff measure is of generic type, so in particular, well approximable. 
\end{theorem}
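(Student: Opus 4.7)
The plan is to adapt the Simmons–Weiss strategy of reducing a Diophantine property to equidistribution of a random walk on the homogeneous space $X=\SL_{d+1}(\R)/\SL_{d+1}(\Z)$, but with the i.i.d.\ random walk of \cite{SW} replaced by a Markovian one, so that the Markov equidistribution result, Theorem~\ref{thm:intro2}, can be invoked.

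First I would code the fractal: present the sofic shift defining $K$ as a one-block factor of the edge shift $E_{\mathcal{G}}$ of a finite strongly connected directed graph $\mathcal{G}$, each edge $e$ carrying a similarity $\phi^{(e)}(x)=c_eO_ex+y_e$ of the underlying IFS, with $\tilde\Pi\colon E_{\mathcal{G}}\to K$ the induced continuous surjection. Appealing to the standard theory of graph directed IFS under the open set condition (Mauldin–Williams, Edgar–Mauldin, Mauldin–Urbanski), the $s$-dimensional Hausdorff measure on $K$ is then, up to normalization, the pushforward under $\tilde\Pi$ of an explicit Markov measure $\Pro$ on $E_{\mathcal{G}}$ of full support, whose transition weights are built from the contraction ratios $c_e$ and a positive Perron eigenvector of the $s$-weighted incidence matrix of $\mathcal{G}$.

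Next I would lift the dynamics to $X$: associate to each edge $e$ the matrix
\begin{align*}
g_e=\begin{pmatrix}c_eO_e&y_e\\0&c_e^{-d}\end{pmatrix}\in G,
\end{align*}
so that the finite set $\calS=\set{g_e\for e \text{ an edge of } \mathcal{G}}$ carries an irreducible Markov chain $(Y_n)_n$ governed by $\Pro$. Via the Dani correspondence, the diagonal orbit $\set{a_th_v\Gamma\for t\ge 0}$, with $a_t=\mathrm{diag}(e^t,\dots,e^t,e^{-dt})$, of a point $v\in K$ with coding $(e_n)_n$, when discretized along the renewal times $t_n=\log(c_{e_0}\dotsm c_{e_{n-1}})^{-1}$, matches the random walk trajectory $(Y_n\dotsm Y_1x_0)_n$ for a base point $x_0$ depending on $v$. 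To apply Theorem~\ref{thm:intro2} it remains to check its two hypotheses. Almost sure infiniteness of the orbits in $X$ follows, as in \cite{SW}, from irreducibility of the graph directed IFS (which precludes a common rational invariant affine subspace for the $g_e$) together with arithmeticity of $\Gamma$; uniform expansion on Grassmannians is extracted from the upper triangular block structure of the $g_e$, in which the translation direction supplies the dominant Lyapunov direction, together with the Markov analogue of conditions (I'), (III') of \S\ref{subsec:iid_intro} — essentially the mechanism driving Corollary~\ref{cor:intro4}.

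Theorem~\ref{thm:intro2} then delivers a.s.\ equidistribution of the random walk towards $m_X$ for every starting point. A standard interpolation from the discrete renewal times $t_n$ to the continuous flow $a_t$ (the excursions between renewals are uniformly bounded because $\calS$ is finite) yields equidistribution of the full $a_t$-orbit for $\Pro$-a.e.\ coding, which by the identification of $\tilde\Pi_*\Pro$ with Hausdorff measure translates into the assertion that $s$-dimensional Hausdorff almost every $v\in K$ is of generic type. The main obstacle I anticipate is verifying uniform expansion on Grassmannians for the Markov chain in the graph directed setting: the algebraic constraints that force expansion in the i.i.d.\ case are dispersed across the edges of $\mathcal{G}$, so one must carefully combine the geometric irreducibility of the IFS with the Perron–Frobenius structure of the induced chain to promote the conditions on $G_{\calS}$ to the dynamical statement \eqref{exp} required by Theorem~\ref{thm:intro2}.
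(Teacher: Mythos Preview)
Your broad outline---code the fractal by a graph directed IFS, identify Hausdorff measure with a pushed-forward adapted Markov measure (Wang's theorem), lift the similarities into the block-upper-triangular subgroup of $\PGL_{d+1}(\R)$, apply a Markov equidistribution theorem, and then interpolate to the continuous flow---is the same strategy the paper follows. There is, however, a genuine gap in the step connecting the random walk to the diagonal orbit.

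Your claim that the discretized orbit $a_{t_n}u_{\Pi(\omega)}\Gamma$ ``matches the random walk trajectory $(Y_n\dotsm Y_1x_0)_n$ for a base point $x_0$ depending on $v$'' is not correct. The random product $g_{\omega|_n}$ has a $K$-part and a $U$-part, not only the $A$-part $a_{t_n}$; there is no fixed $x_0$ with $g_{\omega|_n}x_0=a_{t_n}u_{\Pi(\omega)}\Gamma$. The actual identity (formula~\eqref{magic_formula} in the paper) is
\[
a_{t_n}u_{\Pi(\omega)}\Gamma \;=\; k(g_{\omega|_n})^{-1}\,u_{\Pi(T^n\omega)}\,g_{\omega|_n}\Gamma,
\]
so the discretized flow is the random walk trajectory perturbed by factors depending on the \emph{future} $T^n\omega$ and on the $K$-component $k(g_{\omega|_n})$. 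To pass from equidistribution of $(g_{\omega|_n}\Gamma)_n$ to equidistribution of the left-hand side one needs \emph{joint} equidistribution of $\bigl(g_{\omega|_n}\Gamma,\,k(g_{\omega|_n}),\,T^n\omega\bigr)_n$. This is precisely what the paper establishes via Proposition~\ref{prop:markov_equidist_bootstrap} and Lemma~\ref{lem:equidist_reduction}(ii), together with an additional argument on $X\times K$ using Howe--Moore mixing and \cite[Proposition~5.3]{SW} to pin down the $K$-marginal. Theorem~\ref{thm:intro2} by itself yields only equidistribution of $(g_{\omega|_n}\Gamma)_n$ and does not suffice; the joint equidistribution machinery is the missing piece in your plan, and it is in fact the technical heart of the argument rather than the expansion verification you flag as the main obstacle.

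Two smaller remarks: the coding map should be $e\mapsto g_e\coloneqq\phi_e^{-1}$ (the similarities are contractions, whereas the random walk must be expanding), and the paper verifies expansion not via Theorem~\ref{thm:intro2} directly but by showing each renewal measure $\mu_e$ is in $(M,N)$-upper block form (Proposition~\ref{prop:block_markov}), which reduces cleanly to the i.i.d.\ criterion of \cite[Theorem~6.4]{SW}.
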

\subsection*{Terminology, Notation, Conventions}
In the whole article, $G$ is a real Lie group with Lie algebra $\g$ and $X$ is a locally compact second countable metrizable space on which $G$ acts continuously. 
Frequently, $X$ will be the homogeneous space $G/\Gamma$ for a discrete subgroup $\Gamma$ of $G$. 
In case $\Gamma$ is a lattice, we write $m_X$ for the unique $G$-invariant Borel probability measure on $X$, which we simply refer to as the \emph{Haar measure} on $X$. 
When $G$ is endowed with a Borel probability measure $\mu$, we write $\mu^{*n}$ for the $n^{\text{th}}$ convolution power of $\mu$. 
Throughout, we fix a scalar product $\langle\cdot,\cdot\rangle$ on $\g$, which induces scalar products on the exterior powers of $\g$ given by
\begin{align*}
\langle v_1\wedge\dots\wedge v_k,w_1\wedge \dots\wedge w_k\rangle=\det(\langle v_i,w_j\rangle)_{1\le i,j\le k}
\end{align*}
for pure wedge products and extended bilinearly to all of $\g^{\wedge k}$. 
The induced norms are all denoted by $\norm{\cdot}$. 
This should cause no confusion.

In the sequel, we shall not take the point of view of stochastic processes as in the introduction, but rather work with the canonical coordinate process on the product space $B=G^\N$, governed by some probability measure on it. 
In the i.i.d.\ case, that measure is the product measure $\beta=\mu^{\otimes \N}$. 
In the Markovian case it will in fact be advantageous to not work directly in $G$, but with an abstract set $E$ that is mapped to $G$ via some \emph{coding map} $E\ni e\mapsto g_e\in G$. 
The measures governing our processes will then be Markov measures on $\Omega=E^\N$. 
The shift map on $\Omega$ will be denoted by $T$. 
We shall also need to deal with the semigroup $E^*$ of finite words over $E$. 
The length of a word $w$ is denoted by $\ell(w)$. 
The coding map $e\mapsto g_e$ naturally extends to a homomorphism $E^*\to G$ given by $g_w=g_{e_{n-1}}\dotsm g_{e_0}\in G$ for a word $w=e_{n-1}\ldots e_0\in E^*$. 
For $\omega=(\omega_m)_m\in\Omega$ and $n\in\N$ we shall write $\omega|_n$ for the finite word $\omega_{n-1}\ldots\omega_0\in E^*$. 

An important special case of the above is the choice $E=G$ with the identity map as coding map. 
In this case, we have $g_{b|_n}=b_{n-1}\dotsm b_0$ for $b=(b_m)_m\in B$ and $n\in\N$, and $T$ is the shift map on $B$. 

For a finite-dimensional real vector space $V$, we write $\Pro(V)$ for the projective space associated to $V$. 
Given a (continuous) representation $\rho$ of $G$ on $V$, we set $N(g)=\max(\norm{\rho(g)},\norm{\rho(g)^{-1}})$ for $g\in G$, the norm being the operator norm coming from some fixed norm on $V$. 
The probability measure $\mu$ on $G$ is said to have a \emph{finite first moment} in $(V,\rho)$ if
\begin{align*}
\int_G\log N(g)\dd\mu(g)<\infty,
\end{align*}
and to have \emph{finite exponential moments} in $(V,\rho)$ if
\begin{align*}
\int_G N(g)^\delta\dd\mu(g)<\infty
\end{align*}
for sufficiently small $\delta>0$. 
When $(V,\rho)=(\g,\Ad)$, we shall omit the representation from the notation and simply speak of finite first or exponential moments in $\g$. 

We say that a sequence $(y_n)_n$ in a Polish space $Y$ \emph{equidistributes} towards a probability measure $\eta$ on $Y$ if 
\begin{align*}
\lim_{n\to\infty}\frac1n\sum_{k=0}^{n-1}f(y_k)=\int_Y f\dd\eta
\end{align*}
for every bounded continuous function $f$ on $Y$. 
Equidistribution of sequences in the (locally compact) space $X$ can be expressed in terms of weak* convergence as follows: 
By definition, a sequence $(\nu_n)_n$ of probability measures on $X$ converges to a finite measure $\nu$ on $X$ in the weak* topology if
\begin{align}\label{weak*_conv}
\lim_{n\to\infty}\int_Xf\dd\nu_n=\int_Xf\dd\nu
\end{align}
for every compactly supported continuous function $f$ on $X$. 
The limit measure $\nu$ always satisfies $\nu(X)\le 1$. 
When $\nu$ is a probability measure, weak* convergence of $(\nu_n)_n$ to $\nu$ implies that \eqref{weak*_conv} holds for all bounded continuous functions. 
Consequently, a sequence $(x_n)_n$ in $X$ equidistributes towards a probability measure $\nu$ on $X$ if and only if the \emph{empirical measures} $\frac1n\sum_{k=0}^{n-1}\delta_{x_k}$ converge to $\nu$ in the weak* topology as $n\to\infty$. 

Finally, a probability measure $\nu$ on $X$ is said to be \emph{$\mu$-stationary} if $\mu*\nu=\nu$, where the convolution $\mu*\nu$ is defined by $\mu*\nu=\int_Gg_*\nu\dd\mu(g)$, or in other words by $\int_X f \dd(\mu*\nu)=\int_X\int_G f(gx)\dd\mu(g)\dd\nu(x)$ for every bounded measurable function $f$ on $X$. 
A $\mu$-stationary probability measure $\nu$ is called \emph{$\mu$-ergodic} if it is extremal in the weak*-closed convex set of $\mu$-stationary probability measures on $X$. 
\subsection*{Acknowledgments}
The authors would like to express their gratitude towards Manfred Einsiedler for helpful discussions, valuable insights and his encouragement to pursue the topic at hand, towards Alex Eskin and Elon Lindenstrauss for making available preprint versions of the paper~\cite{EL} and indulging the authors' questions about it, and towards Jean-Fran\c{c}ois Quint for useful discussions. 
Thanks also go to the anonymous referee for the remarks on the manuscript, and in particular for pointing out a simplification of part of the proof of Proposition~\ref{prop:expansion_equivalence}. 
\section{I.I.D.\ Random Walks}\label{sec:iid_walks}
In this section, we investigate i.i.d.\ random products satisfying certain expansion conditions. 
After recalling some classical facts about random matrix products in~\S\ref{subsec:prelim}, these conditions are defined and studied in~\S\ref{subsec:exp_proj} and~\S\ref{subsec:grass}. 
Afterwards, we state and prove measure classification and equidistribution results in~\S\ref{subsec:iid_results}. 
The main result is Theorem~\ref{thm:exp_equidist}, which implies Theorem~\ref{thm:intro1}. 
The employed arguments rely on Eskin--Lindenstrauss' results in~\cite{EL}. 

Throughout this section, $\mu$ is a probability measure on $G$ with support $\calS$ and $G_\calS$ denotes the closed subgroup of $G$ generated by $\calS$. 
\subsection{Preliminaries on Random Matrix Products}\label{subsec:prelim}
We start by recalling two fundamental results about exponential growth rates for random matrix products. 
Let $G=\GL_d(\R)$ and assume $\mu$ has a finite first moment. 

The first result is Oseledets' multiplicative ergodic theorem. 
It makes a statement about the \emph{Lyapunov exponents} $\lambda_1(\mu)\ge \dots \ge \lambda_d(\mu)$ of $\mu$, which are the real numbers defined by 
\begin{align*}
\lambda_1(\mu)+\dots+\lambda_i(\mu)\coloneqq\lim_{n\to\infty} \tfrac{1}{n} \E\sqbr*{\log\norm{(g_{b|_n})^{\wedge i}}}\overset{\beta\text{-a.s.}}{=}\lim_{n\to\infty}\tfrac{1}{n}\log\norm{(g_{b|_n})^{\wedge i}}
\end{align*}
for $1 \le i \le d$, where the second equality follows from Kingman's subadditive ergodic theorem and ergodicity of the underlying Bernoulli shift. 
\begin{theorem}[Oseledets~\cite{Os}]\label{thm:oseledets}
Given $\mu$ as above, there exists a shift-invariant measurable subset $B'\subset B$ of $\beta$--full measure such that for every $b\in B'$ 
\begin{enumerate}
\item $\bigl((g_{b|_n})^*(g_{b|_n})\bigr)^{1/2n}$ converges to an invertible symmetric matrix $L_b$, 
\item the eigenvalues of $L_b$ are $\euler^{\lambda_{i_1}(\mu)},\dots,\euler^{\lambda_{i_s}(\mu)}$, where $1\le i_1<\dots<i_s\le d$ are indices chosen such that $\lambda_{i_1}(\mu)>\dots>\lambda_{i_s}(\mu)$ are the distinct Lyapunov exponents of $\mu$, 
\item if $U^{1}_b,\dots,U^{s}_b$ denote the corresponding eigenspaces of $L_b$, the \emph{Oseledets subspaces} $V_b^j\coloneqq U^j_b\oplus\dots\oplus U^s_b$ have the property that 
\begin{align*}
\lim_{n\to\infty}\tfrac{1}{n}\log\norm{g_{b|_n} v}=\lambda_{i_j}(\mu)
\end{align*}
whenever $v\in V_b^{j}\setminus V_b^{j+1}$, and satisfy the equivariance $V^{j}_b=b_1^{-1}V^j_{Tb}$. 
\end{enumerate}
The space $V_b^{<\max}\coloneqq V_b^2$ is called the \emph{Oseledets subspace of non-maximal expansion}. 
The largest Oseledets subspace with non-positive exponent is denoted by $V_b^{\leqs 0}$ and is called the \emph{Oseledets subspace of subexponential expansion} \textup{(}set $V_b^{\leqs 0}=\set{0}$ if there are only positive exponents\textup{)}. 
\end{theorem}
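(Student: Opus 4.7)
The plan is to reduce the statement to Kingman's subadditive ergodic theorem applied to the exterior powers of the cocycle $b\mapsto g_{b|_n}$, coupled with an analysis of the polar decomposition of $g_{b|_n}$. The finite first moment hypothesis in the standard representation is what ensures integrability throughout.

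First I would establish existence of the Lyapunov exponents. For each $1\le i\le d$, the sequence $a_n^{(i)}(b)=\log\norm{(g_{b|_n})^{\wedge i}}$ is subadditive in $n$ by submultiplicativity of the operator norm on $(\R^d)^{\wedge i}$, and integrable because $|a_n^{(i)}(b)|\le i\sum_{k=0}^{n-1}\log N(b_k)$. The Bernoulli shift $T$ on $(B,\beta)$ is ergodic, so Kingman's theorem yields a $T$-invariant subset $B'\subset B$ of full measure on which $\tfrac1n a_n^{(i)}(b)$ converges to a constant $\Lambda_i$; the Lyapunov exponents are then $\lambda_i(\mu)=\Lambda_i-\Lambda_{i-1}$ (with $\Lambda_0=0$), reordered as claimed.

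Next I would prove that $M_n(b):=\bigl((g_{b|_n})^*(g_{b|_n})\bigr)^{1/(2n)}$ converges to a positive definite symmetric matrix $L_b$ for every $b$ in a shift-invariant full measure subset of $B'$. The eigenvalues of $M_n(b)$ are the $n$-th roots of the singular values of $g_{b|_n}$, and the previous step together with the identities $\prod_{j=1}^{i}\sigma_j(g_{b|_n})=\norm{(g_{b|_n})^{\wedge i}}$ force them to converge to $\euler^{\lambda_1(\mu)},\dots,\euler^{\lambda_d(\mu)}$. The bulk of the work is to show that the corresponding spectral projections also converge; this is the main obstacle. Following Raghunathan's strategy, one proceeds by induction on the number $s$ of distinct Lyapunov exponents: the top eigenspace converges because the induced cocycle on the appropriate Grassmannian contracts onto the dominant singular direction, and intermediate gaps are handled by passing to suitable exterior and dual representations so that each gap becomes a top gap in some auxiliary cocycle. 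Measurability of $b\mapsto L_b$ follows from the pointwise convergence.

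Once $M_n(b)\to L_b$ has been established, the remaining claims are essentially formal. The eigenvalues of $L_b$ are read off as the limiting singular value asymptotics of the first step, grouped by multiplicity; this gives part (ii). For part (iii), let $U_b^1,\dots,U_b^s$ denote the eigenspaces of $L_b$ ordered by decreasing eigenvalue, and set $V_b^j=U_b^j\oplus\dots\oplus U_b^s$. Equivariance $V_b^j=b_1^{-1}V_{Tb}^j$ comes from the cocycle identity relating $g_{b|_{n+1}}$ to $g_{Tb|_n}$ and uniqueness of the limit $L_b$, which forces a compatibility $(b_1)^*L_{Tb}^{2n}\,b_1$ to agree with $L_b^{2n}$ up to subexponential error. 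Finally, for $v\in V_b^j\setminus V_b^{j+1}$ one writes $\norm{g_{b|_n}v}^2=\langle M_n(b)^{2n}v,v\rangle$ and decomposes $v$ along the (converging) eigenspaces of $M_n(b)$; the dominant contribution comes from the component of $v$ in $U_b^j$, yielding $\tfrac1n\log\norm{g_{b|_n}v}\to\lambda_{i_j}(\mu)$, the subdominant terms being absorbed using control on the rate at which $M_n(b)\to L_b$.
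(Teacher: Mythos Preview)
The paper does not supply its own proof of this classical result; immediately after the statement it simply writes ``We refer to Ruelle~[\S1] for an exposition.'' Your outline is precisely the Raghunathan--Ruelle argument contained in that reference (Kingman on exterior powers to pin down the Lyapunov spectrum, then convergence of the spectral projections of the polar part via an induction over the number of distinct exponents), so in that sense it matches what the paper invokes.

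One minor technical remark: your justification of the equivariance $V_b^j=b_0^{-1}V_{Tb}^j$ via a relation of the form $(b_0)^*L_{Tb}^{2n}b_0\approx L_b^{2n}$ is not quite the right way to see it, since conjugation by $b_0$ does not preserve symmetry and the limits $L_b$, $L_{Tb}$ are not related by a simple conjugation. The clean argument is to first establish the growth-rate characterization in (iii), which identifies $V_b^j$ as $\{v:\limsup_n\tfrac1n\log\norm{g_{b|_n}v}\le\lambda_{i_j}(\mu)\}$; equivariance is then immediate from the cocycle identity $g_{b|_{n+1}}=g_{Tb|_n}\,b_0$. This is a reordering of your last two steps rather than a genuine gap.
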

We refer to Ruelle~\cite[\S1]{Ru} for an exposition. 

In contrast to the random nature of Oseledets subspaces, the second result we wish to review describes exponential growth rates along a deterministic filtration. 
\begin{theorem}[Furstenberg--Kifer~\cite{FK}, Hennion~\cite{Hen}]\label{thm:furstenberg_kifer}
Let $\mu$ be as above. 
Then there exists a partial flag $\R^d= F_1\supset F_2\supset \dots\supset F_k\supset F_{k+1}=\set{0}$ of $G_\calS$-invariant subspaces and a collection of real numbers $\lambda_1(\mu)=\beta_1(\mu)>\dots>\beta_k(\mu)$ such that for every $v \in F_i\setminus F_{i+1}$ we have $\beta$-a.s.\ 
\begin{align*}
\lim_{n\to\infty}\tfrac{1}{n}\log \norm{g_{b|_n}v}=\beta_i(\mu).
\end{align*}
Moreover, the $\beta_i(\mu)$ are the values of 
\begin{align*}
\alpha(\nu)\coloneqq\int_{\Pro(\R^d)}\int_G\log\frac{\norm{gv}}{\norm{v}}\dd\mu(g)\dd\nu(\R v)
\end{align*}
that occur when $\nu$ ranges over $\mu$-ergodic $\mu$-stationary probability measures on $\Pro(\R^d)$. 
If $\alpha(\nu)=\beta_i(\mu)$ for such a measure $\nu$, then $v\in F_i\setminus F_{i+1}$ for $\nu$-a.e.\ $\R v\in\Pro(\R^d)$. 
\end{theorem}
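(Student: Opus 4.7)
My plan is to pass to the induced random walk on the projective space $\Pro(\R^d)$ and apply the ergodic theorem to the log-norm cocycle $\sigma(\R v,g)=\log(\norm{gv}/\norm{v})$, which is integrable against $\nu\otimes\mu$ for any probability measure $\nu$ on $\Pro(\R^d)$ thanks to the finite first moment hypothesis on $\mu$. For a $\mu$-stationary probability measure $\nu$, the product $\nu\otimes\beta$ is invariant under the skew-product shift $(\R v,b)\mapsto(g_{b_0}\R v,Tb)$; when $\nu$ is $\mu$-ergodic, a standard projection-to-base argument shows this invariant measure is ergodic for the skew product. Birkhoff's theorem applied to the Birkhoff sums of $\sigma$ then yields
\begin{align*}
\lim_{n\to\infty}\tfrac{1}{n}\log\norm{g_{b|_n}v}=\alpha(\nu)
\end{align*}
for $\nu\otimes\beta$-a.e.\ $(\R v,b)$, which already gives the final statement of the theorem once the flag is in place: if $\alpha(\nu)=\beta_i$, the identity locates $\nu$-a.e.\ $\R v$ inside $F_i\setminus F_{i+1}$.

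To construct the flag $F_\bullet$ and show the set $\Lambda=\{\alpha(\nu):\nu\text{ ergodic }\mu\text{-stationary}\}$ is finite, I would proceed by induction on $d$. In the irreducible base case (no proper $G_\calS$-invariant subspace), a Furstenberg-style simplicity argument forces every ergodic stationary $\nu$ to yield $\alpha(\nu)=\lambda_1(\mu)$, so $k=1$ and $F_1=\R^d$. In the reducible case, pick a proper $G_\calS$-invariant subspace $W\subsetneq\R^d$ and apply the induction hypothesis to the induced representations on $W$ and on $\R^d/W$, obtaining two flags and two exponent sets; then merge them into a flag on $\R^d$, where each $F_i$ is a suitable preimage under $\R^d\to\R^d/W$ of a piece of the quotient flag, spliced with the matching piece of the $W$-flag at coinciding or larger exponents. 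I would then verify that these candidate $F_i$ coincide with the intrinsic sets $\{v\ne 0:\limsup_n\tfrac{1}{n}\log\norm{g_{b|_n}v}\le\beta_i\text{ for }\beta\text{-a.e.\ }b\}\cup\{0\}$; these intrinsic sets are linear thanks to $\norm{g(v+w)}\le 2\max(\norm{gv},\norm{gw})$ and $G_\calS$-invariant thanks to the shift-invariance of $\beta$ combined with the cocycle identity.

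To identify the exact growth rate as $\beta_i$ for $v\in F_i\setminus F_{i+1}$ (as opposed to merely an upper bound), take any weak-* accumulation point $\nu$ of the Cesàro averages $\tfrac{1}{n}\sum_{k<n}\mu^{*k}*\delta_{\R v}$ on $\Pro(\R^d)$. Such $\nu$ is automatically $\mu$-stationary, since stationarity is a weak-* closed condition, and the expected growth rate $\lim_n\tfrac{1}{n}\E[\log\norm{g_{b|_n}v}]$ equals $\alpha(\nu)$ by passing the cocycle identity through the defining Cesàro averages, which is justified by uniform integrability coming from the moment condition. Decomposing $\nu$ into ergodic components places the growth rate in $\Lambda$, and the hypotheses $v\in F_i$, $v\notin F_{i+1}$ force this value to be precisely $\beta_i$. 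The main obstacle, in my view, is the inductive merging step: one must carefully rule out spurious exponents arising from vectors whose growth is secretly governed by their projection to $W$ rather than by their image in $\R^d/W$, and conversely guarantee that every exponent coming from a quotient stationary measure is genuinely realized by vectors in $\R^d$. This comparison between norms on subspaces and on quotients is the technical heart of the theorem and is precisely what Hennion's refinement of Furstenberg-Kifer's original argument addresses.
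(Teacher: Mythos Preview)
The paper does not prove this theorem; it is stated as a background result and attributed to Furstenberg--Kifer~\cite{FK} and Hennion~\cite{Hen}, with no argument supplied. There is therefore nothing to compare your proposal against in the paper itself.

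For what it is worth, your outline is broadly in the spirit of the original Furstenberg--Kifer argument: the skew-product on $\Pro(\R^d)\times B$, the Birkhoff theorem applied to the norm cocycle, and an induction on invariant subspaces are all the right ingredients. Two places where your sketch is thin deserve a flag. First, the irreducible base case is not as immediate as you suggest: absence of a proper $G_\calS$-invariant subspace does not by itself give a ``Furstenberg-style simplicity argument'' pinning every $\alpha(\nu)$ to $\lambda_1(\mu)$; what one actually shows (and this is the core of~\cite{FK}) is that for \emph{every} nonzero $v$ the limit $\tfrac1n\log\norm{g_{b|_n}v}$ exists $\beta$-a.s.\ and equals the top exponent, which requires a separate argument comparing arbitrary vectors to Oseledets-typical ones via invariance. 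Second, in your step identifying the exact growth rate, you write that the expected growth rate equals $\alpha(\nu)$ for a single weak* accumulation point $\nu$; in fact different subsequences may give different limits, and one must argue that the $\limsup$ and $\liminf$ of $\tfrac1n\E[\log\norm{g_{b|_n}v}]$ are both trapped in $[\beta_{i+1},\beta_i]$ (or rather bounded by $\max\alpha(\nu)$ and $\min\alpha(\nu)$ over accumulation points), which is where the flag hypothesis $v\in F_i\setminus F_{i+1}$ enters. You correctly identify the merging step as the delicate part, and Hennion's contribution is indeed to streamline this.
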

When applying the above theorem, we will frequently use the notation $F^{\leqs 0}$ for the maximal subspace $F_i$ with exponent $\beta_i(\mu) \le 0$. 
\subsection{Expansion on Projective Space}\label{subsec:exp_proj}
When all exponents $\beta_i$ in Theorem~\ref{thm:furstenberg_kifer} are positive, all non-zero vectors are expanded by the random matrix product at a uniform exponential rate. 
\begin{definition}[Uniform expansion]
Let $\mu$ be a probability measure on $\GL_d(\R)$ and $P$ a closed $G_\calS$-invariant subset of $\Pro(\R^d)$. 
Then $\mu$ is said to be \emph{uniformly expanding on $P$} if for every $\R v \in P$, for $\beta$-a.e.\ $b\in B$ we have 
\begin{align*}
\liminf_{n\to\infty} \tfrac{1}{n}\log\norm{g_{b|_n}v}>0.
\end{align*}
\end{definition}

In the literature, the idea of uniform expansion has been formalized in different ways, with some relationships established between them (see e.g.\ \cite[Lemma~1.5]{EL}, \cite[\S3]{SW}). 
In the following proposition, we prove the equivalence of the definition we are working with to some of its common variants. 
\begin{proposition}\label{prop:expansion_equivalence}
Let $\mu$ be a probability measure on $\GL_d(\R)$ with finite first moment and let $P$ be a closed $G_\calS$-invariant subset of $\Pro(\R^d)$. 
The following properties are equivalent to uniform expansion of $\mu$ on $P$: 
\begin{enumerate}
\item There exists $N\in\N$ and a constant $C_1>0$ such that for every $\R v \in P$ and every $n\ge N$ we have 
\begin{align*}
\frac1n\int_G\log\frac{\norm{gv}}{\norm{v}}\dd\mu^{*n}(g)\ge C_1>0.
\end{align*}
\item There exists $N\in\N$ and a constant $C_2>0$ such that for every $\R v \in P$ we have 
\begin{align*}
\int_G\log\frac{\norm{gv}}{\norm{v}}\dd\mu^{*N}(g)\ge C_2>0.
\end{align*}
\item For every $\R v \in P$, for $\beta$-a.e.\ $b\in B$ we have 
\begin{align*}
\lim_{n\to\infty}\tfrac{1}{n}\log\norm{g_{b|_n}v}>0.
\end{align*}
\end{enumerate}
\end{proposition}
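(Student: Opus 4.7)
The plan is to prove the chain $(i) \Leftrightarrow (ii) \Rightarrow \text{uniform expansion} \Rightarrow (iii) \Rightarrow \text{uniform expansion}$ and to close the loop via $\text{uniform expansion} \Rightarrow (i)$. Throughout, I set $\alpha_n(\R v) := \int_G \log(\norm{gv}/\norm{v})\dd\mu^{*n}(g)$ and exploit the cocycle identity
\[
\alpha_{n+m}(\R v) = \alpha_m(\R v) + \int_G \alpha_n(g\R v)\dd\mu^{*m}(g),
\]
whose integrand remains over $P$ because $P$ is $G_\calS$-invariant and $\mu^{*m}$ is supported on $G_\calS$.

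The implication $(i)\Rightarrow(ii)$ is immediate by taking $n=N$. For $(ii)\Rightarrow(i)$, I would iterate the cocycle: $\alpha_N \geqs C_2$ on $P$ yields $\alpha_{kN} \geqs kC_2$ on $P$ by induction, and for $n = kN + r$ with $0 \leqs r < N$ the remainder is controlled in absolute value by $rM$, where $M := \int_G \log N(g)\dd\mu(g) < \infty$ by the finite first moment assumption. Combining this iteration with Theorem~\ref{thm:furstenberg_kifer} proves $(ii) \Rightarrow \text{uniform expansion}$: for every $\R v \in P$, $\alpha_n(\R v)/n$ converges to the a.s.\ Lyapunov exponent $\beta_{i(v)}(\mu)$ along $v$ (uniform integrability via the first moment bound), so $(ii)$ forces $\beta_{i(v)}(\mu) \geqs C_2/N > 0$. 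Since Furstenberg--Kifer asserts the a.s.\ \emph{limit} $\tfrac{1}{n}\log\norm{g_{b|_n}v} \to \beta_{i(v)}(\mu)$, this simultaneously delivers uniform expansion and~$(iii)$. The converse $(iii) \Rightarrow \text{uniform expansion}$ is trivial.

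The main obstacle is the remaining direction $\text{uniform expansion} \Rightarrow (i)$, which I would establish by contradiction via a stationary measure construction. If $(i)$ fails, there are sequences $n_j \to \infty$ and $\R v_j \in P$ with $\alpha_{n_j}(\R v_j)/n_j \to 0$. I would then form the Cesaro averages $\bar\nu_j := \tfrac{1}{n_j}\sum_{n=0}^{n_j-1}\mu^{*n} * \delta_{\R v_j}$, which are probability measures on the compact set $P \subset \Pro(\R^d)$; any weak-* subsequential limit $\bar\nu$ is supported on $P$ and $\mu$-stationary because $\mu*\bar\nu_j - \bar\nu_j = (\mu^{*n_j} * \delta_{\R v_j} - \delta_{\R v_j})/n_j$ has total mass tending to $0$. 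The function $\varphi_1(\R v) := \int_G \log(\norm{gv}/\norm{v})\dd\mu(g)$ is continuous on $\Pro(\R^d)$ and bounded in absolute value by $M$, and a telescoping computation yields $\int \varphi_1 \dd\bar\nu_j = \alpha_{n_j}(\R v_j)/n_j$; passing to the weak-* limit gives $\alpha(\bar\nu) = \int \varphi_1 \dd\bar\nu = 0$. Ergodic decomposing $\bar\nu$ then produces a $\mu$-ergodic stationary $\nu_0$ on $P$ with $\alpha(\nu_0) \leqs 0$. By Theorem~\ref{thm:furstenberg_kifer}, $\alpha(\nu_0) = \beta_j(\mu)$ for some $j$ and $\nu_0$ is concentrated on $F_j \setminus F_{j+1}$; hence $P \cap (F_j \setminus F_{j+1})$ is non-empty with $\beta_j(\mu) \leqs 0$, contradicting uniform expansion on $P$.
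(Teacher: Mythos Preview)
Your argument is correct and, for the substantive implication (uniform expansion $\Rightarrow$ (i)), proceeds exactly as the paper does: Ces\`aro averages of orbit measures on $P$, extraction of a $\mu$-stationary weak* limit, the telescoping identity $\int\varphi_1\dd\bar\nu_j=\alpha_{n_j}(\R v_j)/n_j$, and an ergodic component with $\alpha(\nu_0)\le 0$ contradicting Furstenberg--Kifer. One small imprecision: from the failure of (i) you only get $\limsup_j \alpha_{n_j}(\R v_j)/n_j\le 0$, not convergence to $0$; but since only $\alpha(\bar\nu)\le 0$ is used downstream, nothing is affected.

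Where you differ from the paper is in the direction (ii)$\Rightarrow$(iii). The paper argues indirectly: if $P\cap\Pro(F^{\leqs 0})\neq\emptyset$, this compact $G_\calS$-invariant set carries a $\mu$-ergodic stationary measure whose $\alpha$-value is simultaneously $\le 0$ (by Furstenberg--Kifer) and $\ge C_2$ (by (ii)). You instead invoke uniform integrability of the averages $\tfrac1n\log\norm{g_{b|_n}v}$ to pass the almost sure limit $\beta_{i(v)}(\mu)$ through the expectation, obtaining $\alpha_n(\R v)/n\to\beta_{i(v)}(\mu)$ directly. Both routes are short; yours avoids constructing a stationary measure in this direction at the modest cost of justifying UI (which does follow, e.g.\ from $\bigl|\tfrac1n\log\norm{g_{b|_n}v}\bigr|\le\tfrac1n\sum_{k=0}^{n-1}\log N(b_k)$ and UI of i.i.d.\ averages in $L^1$). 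Your explicit iteration for (ii)$\Rightarrow$(i) is also fine, though redundant once (ii)$\Rightarrow$(iii)$\Rightarrow$(i) is established.
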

\begin{proof}
We apply Theorem~\ref{thm:furstenberg_kifer}. 
One of its consequences is that the limit in (iii) exists $\beta$-a.s.\ for every $\R v\in P$. 
In particular, we see that (iii) is equivalent to uniform expansion of $\mu$ on $P$. 
Of the remaining implications, only (ii)$\implies$(iii) and (iii)$\implies$(i) require a proof. 

(ii)$\implies$(iii): 
Since the limit in (iii) exists, we may pass to a subsequence of indices and assume $N=1$. 
The set $P\cap\Pro(F^{\leqs 0})$ is a closed $G_\calS$-invariant subset of $\Pro(\R^d)$. 
Assume it is non-empty. 
Then it supports a $\mu$-ergodic $\mu$-stationary probability measure $\nu$ and Theorem~\ref{thm:furstenberg_kifer} implies that $\alpha(\nu)$ occurs as exponential growth rate on $F^{\leqs 0}$. 
However, due to (ii) we have
\begin{align*}
\alpha(\nu)=\int_P\int_G\log\frac{\norm{gv}}{\norm{v}}\dd\mu(g)\dd\nu(\R v)\ge C_2>0,
\end{align*}
a contradiction. 
Hence, $P\cap\Pro(F^{\leqs 0})$ must be empty, which is equivalent to (iii). 

(iii)$\implies$(i): 
We argue by contradiction. 
If (i) does not hold, then there exists a sequence $(\R v_j)_j$ in $P$ and a sequence of integers $(n_j)_j$ with $n_j\to\infty$ such that 
\begin{align}\label{no_expansion}
\limsup_{j\to\infty}\frac{1}{n_j}\int_G\log\frac{\norm{gv_j}}{\norm{v_j}}\dd\mu^{*n_j}(g)\le 0.
\end{align}
Passing to a subsequence if necessary, we may assume that 
\begin{align*}
\frac{1}{n_j}\sum_{k=0}^{n_j-1}\mu^{*k}*\delta_{\R v_j}\longrightarrow \tilde{\nu}
\end{align*}
as $j\to\infty$ in the weak* topology for some limit probability measure $\tilde{\nu}$ on $\Pro(\R^d)$ with support in $P$. 
Note that $\tilde{\nu}$ necessarily is $\mu$-stationary. 
Using the additive cocycle property of $(g,\R v)\mapsto \log\frac{\norm{g v}}{\norm{v}}$ together with \eqref{no_expansion} it follows that 
\begin{align*}
\int_{\Pro(\R^d)}\int_G\log\frac{\norm{gv}}{\norm{v}}\dd\mu(g)\dd\tilde{\nu}(\R v)&=\lim_{j\to\infty}\frac{1}{n_j}\sum_{k=0}^{n_j-1}\int_G\int_G\log\frac{\norm{gg'v_j}}{\norm{g'v_j}}\dd\mu(g)\dd\mu^{*k}(g')\\
&=\lim_{j\to\infty}\frac{1}{n_j}\int_G\log\frac{\norm{gv_j}}{\norm{v_j}}\dd\mu^{*n_j}(g)\le 0,
\end{align*}
the application of weak* convergence being justified since the function 
\begin{align*}
\R v\mapsto\int_G\log\frac{\norm{gv}}{\norm{v}}\dd\mu(g)
\end{align*}
on $\Pro(\R^d)$ is continuous by dominated convergence in view of the finite first moment assumption on $\mu$. 
Consequently, there exists a $\mu$-ergodic component $\nu$ of $\tilde{\nu}$ with support in $P$ satisfying 
\begin{align*}
\alpha(\nu)=\int_{\Pro(\R^d)}\int_G\log\frac{\norm{gv}}{\norm{v}}\dd\mu(g)\dd\nu(\R v)\le 0.
\end{align*}
The last statement in Theorem~\ref{thm:furstenberg_kifer} therefore implies $\nu(\Pro(F^{\leqs 0}))=1$. 
However, this is a contradiction to (iii), since as remarked before, this condition means that $P\cap\Pro(F^{\leqs 0})=\emptyset$. 
\end{proof}
\subsection{Expansion on Grassmannians}\label{subsec:grass}
Here, we introduce our main expansion assumption and show that it is satisfied in the settings of Theorems~\ref{thm:BQ_intro} and~\ref{thm:SW_main}. 

Let $\Gamma$ be a lattice in the real Lie group $G$ and $X=G/\Gamma$. 
In~\cite{EL}, Eskin--Lindenstrauss introduce the uniform expansion assumption for the adjoint representation to obtain a description of the $\mu$-ergodic $\mu$-stationary probability measures on $X$ (see \cite[Theorem~1.7]{EL}). 
However, as they point out, this condition is not sufficient to ensure that all such measures on $X$ are homogeneous. 
Below, we single out a stronger expansion assumption which guarantees that the only $\mu$-ergodic $\mu$-stationary probability measures on $X$ are finite periodic orbit measures and the Haar measure $m_X$. 

Let $V$ be a real vector space of dimension $d$. 
For each $1\le k\le d$, denote by $\Gr_k(V)$ the $k$-Grassmann variety of $V$. 
Let $\Gr_k(V)\hookrightarrow\Pro(V^{\wedge k})$ be the Pl\"ucker embedding. 
Its image is a closed subset of $\Pro(V^{\wedge k})$ given by $\Pro(\pt^k V)$, where we denote by $\pt^k V$ the set of non-zero pure wedge products in $V^{\wedge k}$. 
For a probability measure $\mu$ on $\GL_d(\R)$, we denote by $\bigwedge^k_*\mu$ the pushforward of $\mu$ under the $k^{\text{th}}$ exterior power representation. 
Note that all the $\bigwedge^k_*\mu$ have finite first moments if $\mu$ does, by virtue of the inequality $N(g^{\wedge k})\le N(g)^k$. 

\begin{definition}[Expansion on Grassmannians]
We say that a probability measure $\mu$ on $\GL_d(\R)$ is \emph{uniformly expanding on Grassmannians} if $\bigwedge^k_*\mu$ is uniformly expanding on $\Pro(\pt^k \R^d)\subset\Pro(\bigwedge^k \R^d)$ for every $1\le k\le d-1$. 
\end{definition}

We will usually impose this expansion condition on $\Ad_*\mu$. 
This accounts for the cases previously studied by Benoist--Quint in~\cite{BQ1} (Proposition~\ref{prop:BQ_assumptions}) and Simmons--Weiss~\cite{SW} (Proposition~\ref{prop:SW_assumptions}). 
\begin{proposition}\label{prop:BQ_assumptions}
Let $G$ be a real Lie group with non-compact simple identity component such that the Zariski closure $\mathcal{G}$ of $\Ad(G)$ is Zariski connected. 
Suppose that $\mu$ has a finite first moment in $\g$ and that $\Ad(G_\calS)$ is Zariski dense in $\mathcal{G}$. 
Then $\Ad_*\mu$ is uniformly expanding on Grassmannians. 
\end{proposition}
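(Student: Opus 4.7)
My plan is to fix $1 \le k \le \dim(\g) - 1$ and verify criterion (iii) of Proposition~\ref{prop:expansion_equivalence} for $\bigwedge^k_*\Ad_*\mu$ on $\Pro(\pt^k \g)$. Finite first moment of $\mu$ in $\g$ transfers to $\bigwedge^k_*\Ad_*\mu$ via the inequality $N(g^{\wedge k}) \le N(g)^k$, so Theorem~\ref{thm:furstenberg_kifer} applies and produces a deterministic flag $\bigwedge^k\g = F_1 \supset \cdots \supset F_k \supset 0$ of $G_\calS$-invariant subspaces with associated growth rates $\beta_1 > \cdots > \beta_k$. Uniform expansion on $\Pro(\pt^k\g)$ then reduces to showing that the subspace $F^{\leqs 0}$ contains no non-zero pure wedge.

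The first step is to upgrade $G_\calS$-invariance to $\mathcal{G}$-invariance: Zariski density of $\Ad(G_\calS)$ in $\mathcal{G}$ implies that every $G_\calS$-invariant subspace of $\bigwedge^k\g$ is automatically $\mathcal{G}$-invariant. Since $\g$ is a simple Lie algebra, $\mathcal{G}$ is a Zariski connected semisimple algebraic group, so I can decompose $\bigwedge^k\g$ as a direct sum of irreducible $\mathcal{G}$-subrepresentations. The core of the argument is then to show that every non-trivial irreducible $\mathcal{G}$-subrepresentation $V \subset \bigwedge^k\g$ has strictly positive top Lyapunov exponent $\lambda_1(V)$ under the restricted random walk.

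This positivity claim is the main obstacle. I plan to establish it in two sub-steps. First, strong irreducibility of $\Ad(G_\calS)$ on $V$ follows from Zariski density together with Zariski connectedness of $\mathcal{G}$: any finite orbit of proper subspaces of $V$ would be stabilized elementwise by a finite index subgroup of $\mathcal{G}$, hence by all of $\mathcal{G}$, contradicting irreducibility. Second, non-triviality of $V$ combined with simplicity of $\mathcal{G}$ modulo its finite center forces the image of $\mathcal{G}$, and so of $\Ad(G_\calS)$, in $\PGL(V)$ to be non-compact. Furstenberg's classical theorem on products of random matrices then yields $\lambda_1(V) > \lambda_2(V)$, and since the representation of $\mathcal{G}$ on $V$ has trivial determinant (the semisimple group $\mathcal{G}$ admitting no non-trivial characters), the Lyapunov exponents on $V$ sum to zero, forcing $\lambda_1(V) > 0$.

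With this in hand, $F^{\leqs 0}$, whose top Lyapunov exponent is non-positive, cannot contain any non-trivial irreducible $\mathcal{G}$-subrepresentation; it therefore consists entirely of $\mathcal{G}$-fixed vectors. If it contained a non-zero pure wedge $v = v_1 \wedge \cdots \wedge v_k$, the line $\R v$ would be $\mathcal{G}$-invariant, and hence so would the $k$-plane spanned by $v_1, \ldots, v_k$ in $\g$. This contradicts simplicity of $\g$ for $1 \le k \le \dim(\g) - 1$, finishing the proof.
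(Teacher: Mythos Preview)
Your approach is essentially the same as the paper's: apply Furstenberg--Kifer to $\bigwedge^k\g$, use complete reducibility and Zariski connectedness to reduce to non-trivial irreducible $\mathcal{G}$-subrepresentations, invoke Furstenberg's positivity theorem on each, conclude that $F^{\leqs 0}$ consists of $\mathcal{G}$-fixed vectors, and rule out fixed pure wedges via simplicity of $\g$.

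One small imprecision worth flagging: Furstenberg's 1963 theorem (the reference \cite[Theorem~8.6]{F} the paper uses) gives $\lambda_1(V)>0$ \emph{directly} from strong irreducibility and non-compactness of the image in $\SL(V)$. The intermediate inequality $\lambda_1(V)>\lambda_2(V)$ you invoke is a different statement (Guivarc'h--Raugi) that in general also requires proximality, and proximality can fail for irreducible representations of non-compact simple real groups (e.g.\ $\SL_2(\C)$ on $\C^2\cong\R^4$). Your detour through the sum-zero argument is therefore unnecessary and the intermediate claim is not quite what Furstenberg proves; simply cite Furstenberg for $\lambda_1(V)>0$ and drop the $\lambda_1>\lambda_2$ step.
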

We remark that in the statement above, one cannot relax the requirement of simplicity to semisimplicity. 
Indeed, expansion fails for any vector corresponding under the Pl\"ucker embedding to a non-trivial proper Lie ideal in $\g$. 
\begin{proof}
We are given that $\Ad(G_\calS)$ is Zariski dense in the non-compact simple real algebraic subgroup $\mathcal{G}$ of $\Aut(\g)$. 
From Furstenberg's theorem on positivity of the top Lyapunov exponent (see \cite[Theorem~8.6]{F}) it follows that $\Ad_*\mu$ is uniformly expanding in every finite-dimensional algebraic representation $(V,\rho)$ of $\mathcal{G}$ without fixed vectors. 
Indeed, using complete reducibility one may assume that $\Ad(G_\calS)$ acts irreducibly, which already implies strong irreducibility in view of Zariski connectedness of $\mathcal{G}$. 
Applying Theorem~\ref{thm:furstenberg_kifer} to the $k^{\text{th}}$ exterior power of the standard representation for some $1\le k\le \dim(G)-1$, we find that $F^{\leqs 0}$ consists of $\mathcal{G}$-fixed vectors only. 
Since a fixed element of $\pt^k\g$ would give rise to a non-trivial proper Lie ideal of $\g$, we conclude $\pt^k\g\cap F^{\leqs 0}=\emptyset$, which is uniform expansion on $\Pro(\pt^k\g)$. 
\end{proof}
\begin{proposition}\label{prop:SW_assumptions}
Suppose that $\mu$ has a finite first moment in $\g$ and satisfies conditions \textup{(I')} and \textup{(III')} from~\textup{\S\ref{subsec:iid_intro}}. 
Then $\Ad_*\mu$ is uniformly expanding on Grassmannians. 
\end{proposition}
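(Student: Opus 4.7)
My plan is to apply Theorem~\ref{thm:furstenberg_kifer} to the $k$-th exterior power of the adjoint representation for each fixed $1 \le k \le \dim(G)-1$ and verify the criterion (iii) of Proposition~\ref{prop:expansion_equivalence} on $\Pro(\pt^k \g)$. Since Proposition~\ref{prop:expansion_equivalence} tells us that uniform expansion on a closed $G_\calS$-invariant subset $P \subset \Pro(\g^{\wedge k})$ is equivalent to $P \cap \Pro(F^{\leqs 0}) = \emptyset$, where $F^{\leqs 0}$ denotes the largest Furstenberg--Kifer subspace with non-positive exponent for $\Ad^{\wedge k}_*\mu$, it suffices to show $F^{\leqs 0} \cap \pt^k\g = \emptyset$. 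In fact, I will show the stronger statement $F^{\leqs 0} = \{0\}$ under the assumptions (I') and (III').

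Suppose for contradiction that $F^{\leqs 0} \ne \{0\}$ for some $1 \le k \le \dim(G)-1$. By Theorem~\ref{thm:furstenberg_kifer}, $F^{\leqs 0}$ is a non-trivial $G_\calS$-invariant subspace of $\g^{\wedge k}$, so condition (III') produces a non-zero vector $v \in F^{\leqs 0} \cap W_k$. The definition of $F^{\leqs 0}$ as the union of the Furstenberg--Kifer layers with non-positive exponent ensures that for every $0 \ne v \in F^{\leqs 0}$, $\beta$-almost surely
\begin{align*}
\lim_{n\to\infty}\tfrac{1}{n}\log\norm{\Ad^{\wedge k}(g_{b|_n}) v} \le 0.
\end{align*}
Consequently $v$ lies in the random Oseledets subspace of subexponential expansion $V_b^{\leqs 0}$ for $\beta$-a.e.\ $b$ (compare Theorem~\ref{thm:oseledets}: the vectors whose $\Ad^{\wedge k}$-growth rate along a given trajectory is non-positive are exactly those in the $V_b^{\leqs 0}$-layer). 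Hence almost surely $W_k \cap V_b^{\leqs 0}$ contains the non-zero vector $v$, contradicting condition (I').

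Therefore $F^{\leqs 0} = \{0\}$ for every $1 \le k \le \dim(G)-1$, so all Furstenberg--Kifer exponents of $\Ad^{\wedge k}_*\mu$ are strictly positive. Applying Proposition~\ref{prop:expansion_equivalence}, we conclude that $\Ad^{\wedge k}_*\mu$ is uniformly expanding on all of $\Pro(\g^{\wedge k})$, and in particular on $\Pro(\pt^k\g)$. This yields uniform expansion of $\Ad_*\mu$ on Grassmannians. The only delicate step is the passage from the deterministic subspace $F^{\leqs 0}$ to the random subspace $V_b^{\leqs 0}$; this is a direct consequence of the compatibility between Theorems~\ref{thm:oseledets} and~\ref{thm:furstenberg_kifer}, and I expect it to be a routine verification rather than a genuine obstacle.
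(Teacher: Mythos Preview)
Your proof is correct and follows essentially the same approach as the paper's: both apply Theorem~\ref{thm:furstenberg_kifer} to the $k^{\text{th}}$ exterior power, use (III') to see that any non-trivial $G_\calS$-invariant subspace (in particular each $F_i$, or equivalently $F^{\leqs 0}$) meets $W_k$, and use (I') to see that vectors in $W_k$ have almost sure positive growth rate, forcing all $\beta_i(\mu)>0$. Your remark about the compatibility between $F^{\leqs 0}$ and $V_b^{\leqs 0}$ is indeed routine---intersecting the Oseledets full-measure set with the FK full-measure set for a fixed $v$ gives the inclusion $F^{\leqs 0}\subset V_b^{\leqs 0}$ almost surely---and the paper leaves this implicit.
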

\begin{proof}
Let $1\le k\le \dim(G)-1$ and apply Theorem~\ref{thm:furstenberg_kifer} to the $k^{\text{th}}$ exterior power of the adjoint representation. 
The obtained spaces $F_i$ are $G_\calS$-invariant. 
Since conditions (I') and (III') together force every invariant subspace to contain vectors exhibiting almost sure exponential growth, all the numbers $\beta_i(\mu)$ are positive, which is uniform expansion on $\Pro(\g^{\wedge k})$. 
\end{proof}
Let us now explain the example at the end of~\S\ref{subsec:iid_intro} in greater detail. 
\begin{example}
Let $G=\SL_3(\R)$, $\Gamma=\SL_3(\Z)$ and $\mu=\tfrac13(\delta_{g_1}+\delta_{g_2}+\delta_{g_3})$ with 
\begin{align*}
g_1=\begin{pmatrix}3&&\\&2&\\&&1/6\end{pmatrix},\,g_2=\begin{pmatrix}3&&1\\&2&\\&&1/6\end{pmatrix}\text{ and }g_3=\begin{pmatrix}3&&\\&2&1\\&&1/6\end{pmatrix}.
\end{align*}
A calculation shows that the subspaces 
\begin{align*}
V^{++}=\set*{\begin{pmatrix}0&&t\\&0&\\&&0\end{pmatrix}\for t\in\R},\,V^+=\set*{\begin{pmatrix}0&&\\&0&t\\&&0\end{pmatrix}\for t\in\R},
\end{align*}
of $\g$ are $G_\calS$-invariant with Lyapunov exponent $\log(18)$ on $V^{++}$ and $\log(12)$ on $V^+$. 
Thus there cannot exist a subspace $W\subset \g$ satisfying (I) and (III). 
However, the space $W'=V^{++}\oplus V^+$ satisfies (I') and (III'), as can be verified by direct computation. 
More generally, the space $W_k$ for $k\ge 1$ can be defined as the sum of the eigenspaces of $\Ad(g_1)^{\wedge k}$ corresponding to eigenvalues strictly greater than $1$. 
That these spaces have the correct properties is established as in the proof of \cite[Theorem~6.4]{SW}. 
\end{example}
\subsection{Measure Classification and Equidistribution Under Expansion}\label{subsec:iid_results}
We are now ready to establish equidistribution under the assumption of uniform expansion on Grassmannians in the adjoint representation. 

As outlined in~\S\ref{subsec:iid_intro}, the first step is the classification of stationary measures. 
The result is essentially a corollary of Eskin--Lindenstrauss' classification in~\cite{EL}. 
As already indicated at the beginning of~\S\ref{subsec:grass}, the aspect that is new is that our stronger expansion condition allows to rule out exceptional stationary measures that can a priori occur in \cite[Theorem~1.7]{EL}. 
\begin{theorem}\label{thm:exp_stat}
Let $G$ be a real Lie group, $\Gamma$ a discrete subgroup of $G$, and $\nu$ a $\mu$-ergodic $\mu$-stationary probability measure on $X=G/\Gamma$. 
Suppose that $\mu$ has a finite first moment in $\g$, that $\Ad_*\mu$ is uniformly expanding on Grassmannians, and that $G_\calS$ acts transitively on the connected components of $X$. 
Then either
\begin{enumerate}
\item $\nu$ is $G_\calS$-invariant and supported on a finite $G_\calS$-orbit, or 
\item $\Gamma$ is a lattice and $\nu$ is the Haar measure $m_X$ on $X$. 
\end{enumerate}
\end{theorem}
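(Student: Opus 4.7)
The plan is to follow the two-step strategy outlined after the statement: apply the Eskin--Lindenstrauss classification as a black box, then use the stronger Grassmannian expansion hypothesis to eliminate all but the desired cases.

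First I invoke \cite[Theorem~1.7]{EL}. Its hypothesis is uniform expansion on $\g$ in the adjoint representation, which is the $k=1$ case of uniform expansion on Grassmannians and is hence automatic. Its output is a dichotomy for any $\mu$-ergodic $\mu$-stationary probability measure $\nu$ on $X$: either $\nu$ is supported on a finite $G_\calS$-orbit, which is already conclusion (i), or $\nu$ is \emph{homogeneous}, meaning there exists a closed connected subgroup $H \leq G$ normalized by $G_\calS$ such that $\nu$ is the $H$-invariant probability measure on a closed $H$-orbit $Hx_0 \subset X$. It remains to show that this homogeneous alternative forces $H=G$; then $\nu$ is $G$-invariant, so $\Gamma$ must be a lattice and $\nu=m_X$, delivering (ii).

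In the homogeneous case, set $\mathfrak{h}:=\Lie(H)\subset\g$; normalization by $G_\calS$ is equivalent to $\Ad(G_\calS)$-invariance of $\mathfrak{h}$. Assume for contradiction that $0<k:=\dim\mathfrak{h}<\dim\g$. A basis of $\mathfrak{h}$ produces a non-zero pure wedge $v_\mathfrak{h}\in\pt^k\g$ on whose line $G_\calS$ acts by the multiplicative character $\chi(g):=\det(\Ad(g)|_\mathfrak{h})$. Along any trajectory, the norm growth of $\Ad(g_{b|n})^{\wedge k}v_\mathfrak{h}$ is purely multiplicative, and the strong law of large numbers, available thanks to the finite first moment in $\g$, identifies the $\beta$-a.s.\ exponential rate with $\int_G\log|\chi(g)|\,\dd\mu(g)$; uniform expansion on Grassmannians applied to $[\R v_\mathfrak{h}]\in\Pro(\pt^k\g)$ then forces this integral to be strictly positive. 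On the other hand, the combination of $\mu$-stationarity and the homogeneous form of $\nu$ will force the same integral to vanish. First I verify that every $g\in\supp(\mu)$ preserves the orbit $Hx_0$ setwise (otherwise $g_*\nu$ would be supported on an $H$-orbit disjoint from $Hx_0$, and averaging could not recover $\nu$); given this, $g_*\nu$ is itself an $H$-invariant probability measure on $Hx_0$ and hence equals $\nu$. But the unnormalized $H$-invariant measure $\mu_{Hx_0}$ on the orbit satisfies $g_*\mu_{Hx_0}=|\chi(g)|^{-1}\mu_{Hx_0}$ by the standard change-of-variables for conjugation of Haar measure, so $g_*\nu=\nu$ forces $|\chi(g)|=1$ for every $g\in\supp(\mu)$, yielding $\int\log|\chi|\,\dd\mu=0$---the desired contradiction. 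The edge case $\mathfrak{h}=0$ (i.e., $H$ discrete) reduces to conclusion (i) via recurrence of random walks on countable sets of finite stationary measure, while $\mathfrak{h}=\g$ combined with the transitivity of $G_\calS$ on connected components of $X$ upgrades $H^\circ=G^\circ$ to $H=G$.

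The step I expect to be the main technical obstacle is the scaling identity $g_*\mu_{Hx_0}=|\chi(g)|^{-1}\mu_{Hx_0}$, especially when $H$ is non-unimodular. While the Lie-algebraic fact that conjugation by $g$ acts on $\mathfrak{h}$ with Jacobian $\chi(g)$ is immediate, translating this cleanly into a statement about the unnormalized orbit measure requires tracking the modular function of $H$ and checking that the resulting character on the normalizer of $H$ is trivial on $\supp(\mu)$. The other ingredients---invocation of Eskin--Lindenstrauss, the pure-wedge computation, and the reduction of the edge cases---are routine once this scaling statement is in place.
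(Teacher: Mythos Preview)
Your proposal has a genuine gap: you misstate the output of \cite[Theorem~1.7]{EL}. Under uniform expansion on $\g$ alone, Eskin--Lindenstrauss do \emph{not} conclude that $\nu$ is a single $H$-homogeneous measure with $H$ normalized by $G_\calS$. Rather, their conclusion (with trivial centralizer $Z$) is an integral decomposition
\[
\nu=\int_{G/H}g_*\nu_0\dd\lambda(g),
\]
where $H$ is a closed subgroup of positive dimension, $\nu_0$ is $H$-homogeneous on $X$, and $\lambda$ is a $\mu$-stationary probability measure on $G/H$. The paper even remarks explicitly (just before the theorem statement) that uniform expansion on $\g$ is not sufficient to force homogeneity of $\nu$. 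Since $H$ need not be normalized by $G_\calS$, your line $\R v_{\mathfrak h}$ is not $G_\calS$-invariant in general, and the character argument collapses at the outset.

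The paper's actual proof works with this weaker output. The key observation is that $H$ is unimodular (it contains a lattice, namely a conjugate of $\Gamma$ intersected with $H$), so $H$ fixes the pure wedge $\rho\in\pt^k\g$ itself rather than merely the line $\R\rho$. Hence $\lambda$ descends to a $\mu$-stationary probability measure $\hat\lambda$ on the orbit $G\rho\subset\g^{\wedge k}\setminus\{0\}$. Now one obtains a contradiction not via a character computation but via Poincar\'e recurrence: the skew product $(b,w)\mapsto(Tb,\Ad^{\wedge k}(b_0)w)$ preserves $\beta\otimes\hat\lambda$, yet uniform expansion on Grassmannians forces $\norm{\Ad^{\wedge k}(g_{b|_n})w}\to\infty$ almost surely for every $w\in\pt^k\g$. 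This argument needs no normalization of $H$ by $G_\calS$ and sidesteps entirely the modular-function issue you flagged as the main obstacle.
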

The proof combines ideas from the proofs of \cite[Theorem~1.3]{EL} and \cite[Proposition~3.2]{SW}. 
\begin{proof}
In view of Proposition~\ref{prop:expansion_equivalence}, we may apply \cite[Theorem~1.7]{EL} with trivial $Z$. 
The conclusion is that if we are not in case (i), $\nu$ must be of the form 
\begin{align*}
\nu=\int_{G/H}g_*\nu_0\dd\lambda(g),
\end{align*}
where $H$ is a closed subgroup of $G$ of positive dimension, $\nu_0$ is an $H$-homogeneous probability measure on $X$, and $\lambda$ is a $\mu$-stationary probability measure on $G/H$. 
Observe that $H$ is unimodular, since $\nu_0$ being $H$-homogeneous implies that $H$ intersects a conjugate of $\Gamma$ in a lattice. 

If $\dim(H)=\dim(G)$, then $\lambda$ is $G_\calS$-invariant (being stationary on a countable set, see \cite[Lemma~8.3]{BQ1}), and since $G_\calS$ acts transitively on the connected components of $X$ by assumption, it follows that $\nu=m_X$. 

Otherwise, we have $k\coloneqq\dim(H)<\dim(G)$. 
Let $v_1,\dots,v_k$ be a basis of $\Lie(H)$ and consider $\rho=v_1\wedge\dots\wedge v_k\in\pt^k\g$ and the stabilizer $L=\Stab_G(\rho)$ of $\rho$ in $G$. 
Since $H$ is unimodular we have $H\subset L$. 
Thus $\lambda$ projects to a $\mu$-stationary probability measure $\hat{\lambda}$ on $G/L\cong G\rho\subset \g^{\wedge k}\setminus\set{0}$. 
The measure $\beta\otimes\hat{\lambda}$ is then a probability measure on $B\times \g^{\wedge k}$ preserved by the skew-product transformation 
\begin{align*}
\hat{T}(b,w)=(Tb,\Ad^{\wedge k}(b_0)w),
\end{align*}
where $b=(b_n)_n$ and $T$ is the shift on $B$ (see \cite[Proposition~2.14]{BQ_book}). 
However, since $\hat{\lambda}(\set{0})=0$, our expansion assumption implies that almost every trajectory under this transformation is divergent, contradicting Poincar\'e recurrence. 
\end{proof}
\begin{remark}\label{rmk:expansion}
To apply \cite[Theorem~1.7]{EL} in the proof above, we need uniform expansion on $\g$. 
In the exterior powers of $\g$ the proof only uses almost sure divergence, i.e.\ that for every $v\in\pt^k\g$ with $2\le k\le \dim(G)-1$ we have 
\begin{align*}
\lim_{n\to\infty}\norm{\Ad^{\wedge k}(g_{b|_n})v}=\infty
\end{align*}
for $\beta$-a.e.\ $b\in B$. 
However, this property in fact already implies uniform expansion. 

To see this, note that if uniform expansion does not hold, then the compact set $\Pro(\pt^k\g)\cap\Pro(F^{\leqs 0})$ is non-empty and $G_\calS$-invariant and therefore supports a $\mu$-ergodic $\mu$-stationary probability measure $\nu$. 
Using Atkinson/Kesten's lemma (see e.g.\ \cite[Lemma~II.2.2]{BL_book}) the above almost sure divergence implies $\alpha(\nu)>0$, which gives a contradiction in view of Theorem~\ref{thm:furstenberg_kifer}. 
\end{remark}
The second ingredient is non-escape of mass. 
\begin{proposition}\label{prop:non_escape}
Let $G$ be a real Lie group with simple identity component such that the Zariski closure of $\Ad(G)$ is Zariski connected and $\Gamma$ a lattice in $G$. 
Suppose that $\mu$ has finite exponential moments in $\g$ and that $\Ad_*\mu$ is uniformly expanding on Grassmannians. 
Then, almost surely, there is no escape of mass for the random walk on $X=G/\Gamma$, in the sense that for every $x_0\in X$ and $\epsilon>0$ there exists a compact set $K\subset X$ such that, $\beta$-a.s., 
\begin{align*}
\limsup_{n\to\infty}\tfrac{1}{n}\abs{\set{0\le k<n\for g_{b|_k}x_0\notin K}}\le \epsilon.
\end{align*}
\end{proposition}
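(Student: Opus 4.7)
My plan is to follow the Eskin--Margulis non-divergence strategy \cite{EM}, implemented along the same lines as in \cite[\S2]{SW}. The crux is to exhibit a proper function $u\colon X\to[1,\infty)$ (i.e.\ one whose sublevel sets are relatively compact in $X$) together with an integer $n_0\geqs 1$ and constants $a\in(0,1)$, $b>0$ such that the drift inequality
\begin{align*}
\int_G u(gx)\dd\mu^{*n_0}(g)\leqs a\,u(x)+b
\end{align*}
holds for every $x\in X$. Once this is in place, the remainder of the argument is essentially formal.

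Granted the drift inequality, iteration yields $\int_G u(gx)\dd\mu^{*n n_0}(g)\leqs a^n u(x)+\tfrac{b}{1-a}$ for every $n\in\N$. A standard supermartingale argument applied to the skew product $(b,x)\mapsto(Tb,b_0 x)$ on $B\times X$, together with Birkhoff's ergodic theorem, then yields that for every $x_0\in X$, $\beta$-a.s.\
\begin{align*}
\limsup_{n\to\infty}\tfrac{1}{n}\sum_{k=0}^{n-1}u(g_{b|_k}x_0)\leqs\tfrac{b}{1-a}.
\end{align*}
Given $\epsilon>0$, choose $M>\tfrac{b}{\epsilon(1-a)}$ and set $K=\set{x\in X\for u(x)\leqs M}$, which is compact by properness of $u$. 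Since $\mathbf{1}_{X\setminus K}\leqs u/M$ pointwise, the above bound immediately gives
\begin{align*}
\limsup_{n\to\infty}\tfrac{1}{n}\abs{\set{0\leqs k<n\for g_{b|_k}x_0\notin K}}\leqs\tfrac{b}{M(1-a)}<\epsilon,
\end{align*}
which is the desired conclusion.

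The main obstacle is clearly the construction of $u$ and the verification of the drift inequality. Following Eskin--Margulis, and its adaptation to random walks in \cite[\S2]{SW}, one builds $u$ from reduction theory on $X=G/\Gamma$: the cuspidal part of $X$ is coordinatized by short $\Gamma$-rational vectors in certain $G$-representations, most importantly in exterior powers $\g^{\wedge k}$ of the adjoint representation, and $u$ is taken to be a regularized sum of terms of the form $\norm{v}^{-\delta}$ over such vectors, for a small parameter $\delta>0$. After a dyadic decomposition, the drift inequality reduces to uniform estimates
\begin{align*}
\int_G\norm{\Ad^{\wedge k}(g)v}^{-\delta}\dd\mu^{*n_0}(g)\leqs c\norm{v}^{-\delta}
\end{align*}
with $c<1$, valid for all non-zero $v\in\pt^k\g$. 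Such estimates are precisely what uniform expansion of $\bigwedge^k_*\Ad_*\mu$ on $\Pro(\pt^k\g)$ furnishes once $\delta$ is chosen sufficiently small, via a spectral argument on the projective cocycle; here the finite exponential moments hypothesis on $\mu$ is indispensable, as it simultaneously guarantees convergence of the $\delta$-moment integrals and allows the contraction constant to be brought below~$1$. The assumptions that $G$ has simple identity component and that $\Ad(G)$ has Zariski-connected Zariski closure are needed to preclude $G_\calS$-invariant subspaces that would otherwise obstruct the spectral gap behind these estimates.
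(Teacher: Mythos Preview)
Your approach is essentially the same as the paper's: reduce non-escape of mass to the existence of a Lyapunov function via a standard argument (the paper cites \cite[Lemma~3.10]{BQ3} for this), construct that function following Eskin--Margulis~\cite{EM}, and verify their contraction condition on the relevant representations using the uniform expansion hypothesis together with finite exponential moments (the paper invokes the proof of \cite[Lemma~4.2]{EM} for this last step).

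Two points deserve comment. First, you omit a genuine reduction that the paper carries out explicitly: the construction in~\cite{EM} requires $G$ to be a Zariski connected semisimple real algebraic group with a non-uniform lattice, whereas the proposition allows an arbitrary real Lie group with simple identity component. The paper passes to $\mathcal{G}/\Ad(\Gamma)$, where $\mathcal{G}$ is the Zariski closure of $\Ad(G)$, using that the induced map $G/\Gamma\to\mathcal{G}/\Ad(\Gamma)$ is proper (via \cite[Lemma~6.1]{BQ_rec}); this is precisely where the hypotheses on simplicity and Zariski connectedness enter. Second, and relatedly, your last sentence misidentifies the role of these hypotheses: they are not needed for the contraction estimate on $\pt^k\g$, which follows directly from the assumed uniform expansion on Grassmannians, but rather to place oneself in the algebraic-group setting where the Eskin--Margulis construction of $u$ is available.
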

As is by now standard (see e.g.\ \cite{BQ_rec,BQ3,EM}), recurrence results of this type are most conveniently established by constructing what is known as \emph{Lyapunov function} for the random walk (also referred to as \emph{Margulis function} in this context), that is, a proper continuous function $f\colon X\to[0,\infty)$ which is contracted by $\mu$ in the sense that there are constants $c<1$ and $d\ge 0$ such that
\begin{align*}
\int_Gf(gx)\dd\mu(g)\le cf(x)+d
\end{align*}
for all $x\in X$. 
The proof of the proposition above will thus boil down to showing the existence of such a Lyapunov function. 
Specifically, we are going to show that our expansion assumption allows using the construction of Eskin--Margulis in~\cite{EM}, a strategy that already appeared in the proof of \cite[Theorem~2.1]{SW}. 
\begin{proof}[Proof of Proposition~\textup{\ref{prop:non_escape}}]
By \cite[Lemma~3.10]{BQ3}, it is enough to exhibit a Lyapunov function for the random walk. 
In order to use results from~\cite{EM}, we need to perform some initial reductions. 

Setting $R=\ker(\Ad)$, we know that $R\cap \Gamma$ has finite index in $R$ and the image $\Ad(\Gamma)$ is a lattice in the Zariski closure $\mathcal{G}$ of $\Ad(G)$ (see \cite[Lemma~6.1]{BQ_rec}). 
Accordingly, the induced map  from $X=G/\Gamma$ to $\mathcal{G}/\Ad(\Gamma)$ is proper. 
We may thus assume to begin with that $G$ is a Zariski connected simple real algebraic group. 
If $\Gamma$ is cocompact, there is nothing to prove. 
So we may moreover assume that $\Gamma$ is nonuniform, finally placing us in the setting of~\cite{EM}. 

We want to use the construction of a Lyapunov function given in~\cite[\S3]{EM}. 
For this, what remains to argue is that \enquote{condition (A)}, formulated at the end of~\cite[\S2]{EM}, is satisfied. 
The requirement is that for certain representations $(\rho_i,V_i)$ of $G$ and vectors $w_i\in V_i$, the following contraction property holds: 
For sufficiently small $\delta>0$ there ought to exist $c<1$ and $n\in\N$ such that 
\begin{align}\label{condition_A}
\int_G\norm{\rho_i(g)v}^{-\delta}\dd\mu^{*n}(g)\le c\norm{v}^{-\delta}
\end{align}
for all $v\in Gw_i$. 
The representations $\rho_i\colon G\to\GL(V_i)$ and vectors $w_i$ occurring in the above condition are characterized by the property that the stabilizer of $\R w_i$ in $G$ is some predetermined maximal parabolic subgroup $P_i$ of $G$. 
In our case, we can thus take $V_i=\g^{\wedge\dim(P_i)}$, $\rho_i\colon G\to\SL(V_i)$ the respective exterior power of $\Ad$, and $w_i$ to be a volume form of the Lie algebra $\mathfrak{p}_i$ of $P_i$ (see \cite[Proposition~7.83(b)]{Kn_book}). 
However, using uniform expansion as input, the proof of \cite[Lemma~4.2]{EM} precisely shows that \eqref{condition_A} holds for all non-zero pure wedge products $v$. 
This finishes the proof. 
\end{proof}
Combining the previous statements, we arrive at the main equidistribution result of this section. 
\begin{theorem}\label{thm:exp_equidist}
Let $G$ be a real Lie group with simple identity component such that the Zariski closure of $\Ad(G)$ is Zariski connected and $\Gamma$ a lattice in $G$. 
Suppose that $G_\calS$ is not virtually contained in any conjugate of $\Gamma$, $G_\calS$ acts transitively on the connected components of $X=G/\Gamma$, $\mu$ has finite exponential moments in $\g$, and $\Ad_*\mu$ is uniformly expanding on Grassmannians. 
Then for every $x_0\in X$, the random walk trajectory $(g_{b|_n}x_0)_n$ equidistributes towards $m_X$ for $\beta$-a.e.\ $b\in B$. 
\end{theorem}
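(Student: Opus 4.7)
The plan is to combine the stationary measure classification (Theorem~\ref{thm:exp_stat}) with the non-escape of mass (Proposition~\ref{prop:non_escape}) via the standard random-walk equidistribution argument. Fix $x_0\in X$ and set
\begin{align*}
\mu_n^b \coloneqq \frac{1}{n}\sum_{k=0}^{n-1}\delta_{g_{b|_k}x_0};
\end{align*}
the goal is to show $\mu_n^b\to m_X$ in the weak* topology for $\beta$-a.e.\ $b$. Applying Proposition~\ref{prop:non_escape} along a sequence $\epsilon_\ell\downarrow 0$ produces a single $\beta$-full event on which $(\mu_n^b)_n$ is tight, so that every weak* subsequential limit is a genuine probability measure on $X$.

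Next, I would show that any such weak* accumulation point $\nu$ is $\mu$-stationary. For $f\in C_c(X)$ and $\mathcal{F}_k=\sigma(b_0,\ldots,b_{k-1})$, the random variables
\begin{align*}
Z_k = f(g_{b|_{k+1}}x_0) - \int_G f(g\,g_{b|_k}x_0)\dd\mu(g)
\end{align*}
form a bounded martingale difference sequence adapted to $(\mathcal{F}_{k+1})_k$. The $L^2$ martingale law of large numbers gives $\tfrac1n\sum_{k=0}^{n-1}Z_k\to 0$ $\beta$-a.s., which rearranges to $\mu_n^b(f)-(\mu*\mu_n^b)(f)\to 0$. Passing to a weak* convergent subsequence, ranging $f$ over a countable dense subfamily of $C_c(X)$, and using that the test function $y\mapsto\int_G f(gy)\dd\mu(g)$ is bounded and continuous (dominated convergence together with the finite first moment), I obtain $\nu(f)=(\mu*\nu)(f)$, i.e.\ $\nu$ is $\mu$-stationary.

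Decomposing $\nu$ into $\mu$-ergodic components and applying Theorem~\ref{thm:exp_stat}, whose hypotheses are all inherited from those of the present theorem, each component is either supported on a finite $G_\calS$-orbit in $X$ or equals $m_X$. The assumption that $G_\calS$ is not virtually contained in any conjugate of $\Gamma$ is, as noted in the paragraph following Theorem~\ref{thm:SW_main}, equivalent to the absence of finite $G_\calS$-orbits in $X$, so the first alternative is vacuous. Hence $\nu=m_X$, and since $(\mu_n^b)_n$ is tight with a unique weak* accumulation point, the full sequence converges to $m_X$.

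The most delicate point is the stationarity step. Because $\mu$ need not have compact support, the convolution $\mu*\cdot$ produces test functions that are bounded and continuous but not compactly supported; passing it through a weak* limit therefore relies essentially on the tightness furnished by Proposition~\ref{prop:non_escape}, which upgrades vague convergence of the $\mu_n^b$ to convergence against bounded continuous functions. Once this point is settled, the remaining ingredients—the classification theorem and the standard passage from subsequential to full convergence—combine routinely.
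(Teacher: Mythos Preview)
Your proof is correct and follows essentially the same route as the paper's. The only difference is cosmetic: where you spell out the martingale-difference computation showing $\mu_n^b(f)-(\mu*\mu_n^b)(f)\to 0$ and then invoke tightness to pass the bounded continuous convolution test function through the limit, the paper simply cites the Breiman law of large numbers \cite[Corollary~3.3]{BQ3} on the one-point compactification of $X$ (which packages exactly your argument) and then uses Proposition~\ref{prop:non_escape} to ensure the limit has no mass at infinity. The remaining steps---ergodic decomposition, Theorem~\ref{thm:exp_stat}, and ruling out finite orbits via the virtual containment hypothesis---are identical.
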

\begin{proof}
The remaining argument is standard: 
\begin{itemize}
\item The Breiman law of large numbers (see \cite[Corollary~3.3]{BQ3}) applied to the one-point compactification of $X$ shows that, almost surely, any weak* limit of the sequence $\frac1n\sum_{k=0}^{n-1}\delta_{g_{b|_k}x_0}$ of empirical measures is $\mu$-stationary. 
\item Non-escape of mass (Proposition~\ref{prop:non_escape}) implies that all such weak* limits are probability measures on $X$. 
\item Since there are no finite orbits, using the classification of stationary measures (Theorem~\ref{thm:exp_stat}) we conclude that all ($\mu$-ergodic components of) these limits coincide with the Haar measure $m_X$. 
Hence the result.\qedhere
\end{itemize}
\end{proof}
\begin{proof}[Proof of Theorem~\textup{\ref{thm:intro1}}]
Using connectedness of $G$, we see that the conditions of Theorem~\ref{thm:exp_equidist} are satisfied. 
\end{proof}
\section{Markov Random Walks}\label{sec:markov_walks}
We now turn our attention to Markov random walks. 
We first adopt a bootstrapping approach (\S\ref{subsec:markov_stat_bootstrap},~\S\ref{subsec:markov_equidist_bootstrap}), upgrading statements about the random walk with i.i.d.\ increments $Z_n=Y_{\tau_g^{n+1}-1}\dotsm Y_{\tau_g^n}$ to statements about the whole random walk. 
As preparation, we study the distribution of these excursions, which we call \emph{renewal measures}, in~\S\ref{subsec:renewal_meas}. 
In~\S\ref{subsec:markov_mom_exp} we discuss expansion in the Markovian setting, and in~\S\ref{subsec:exp_markov} we prove our main result (Theorem~\ref{thm:exp_markov_equidist}) about expanding Markov chains, which implies Theorem~\ref{thm:intro2}. 
The final subsection~\S\ref{subsec:upper_block_form} is dedicated to a concrete example that contains Corollary~\ref{cor:intro4} and will be important in~\S\ref{sec:fractals} about Diophantine approximation on fractals. 

Let $E$ be a countable set. 
A Markov chain on $E$ is defined by a transition kernel $P$ on $E$. 
Recall that this means that for every $e\in E$, $P(e,\cdot)$ is a probability distribution on $E$. 
We are going to write $p_{e',e}\coloneqq P(e,e')$ for the probability of going from state $e$ to $e'$, and $p_w=p_{e_{n-1},e_{n-2}}\dotsm p_{e_1,e_0}$ for a word $w=e_{n-1}\ldots e_0\in E^*$. 
(Since we are studying random walks on $X$ coming from a left action of $G$, we are using a right-to-left ordering throughout this section.) 
Let $\Pro_e$ be the associated Markov measure on $\Omega=E^\N$ starting at $e\in E$ (at time $n=0$), characterized by the property that 
\begin{align*}
\Pro_e\Bigl[\set{e_0}\times\dots\times\set{e_n}\times E^\N\Bigr]=p_{e_n,e_{n-1}}\dotsm p_{e_1,e_0}\delta_{e_0=e}
\end{align*}
for $e_0,\dots,e_n\in E$. 
More generally, for an arbitrary probability distribution $\lambda$ on $E$ we write 
\begin{align}\label{arb_markov}
\Pro_\lambda=\sum_{e\in E}\lambda(\set{e})\Pro_e,
\end{align}
which is the unique Markov measure on $\Omega$ for the given Markov chain on $E$ with starting distribution $\lambda$. 
Expectation with respect to the probability measures $\Pro_e$ and $\Pro_\lambda$ will be denoted by $\E_e$ and $\E_\lambda$, respectively. 

The consecutive hitting times of a state $e$ will be denoted by $\tau_e^n$, defined by $\tau_e^0=0$ and 
\begin{align*}
\tau_e^n(\omega)=\inf\set{n>\tau_e^{n-1}(\omega)\for \omega_n=e}
\end{align*}
for $\omega=(\omega_m)_m\in\Omega$ and $n\in\N$. 
We abbreviate the first hitting time $\tau_e^1$ as $\tau_e$. 

We will only be interested in irreducible chains, i.e.\ ones where every state can be reached from every other in finite time with positive probability (formally, chains with $\Pro_e[\tau_{e'}<\infty]>0$ for any two states $e,e'\in E$). 
Let us recall the classical notions of recurrence for Markov chains. 
\begin{definition}\label{def:markov_states}
An irreducible Markov chain on $E$ is called 
\begin{itemize}
\item \emph{recurrent} if $\Pro_e[\tau_e<\infty]=1$ for every $e\in E$, 
\item \emph{positive recurrent} if $\E_e[\tau_e]<\infty$ for every $e\in E$, and 
\item \emph{exponentially recurrent} if for every $e\in E$ there exists $\delta>0$ such that $\E_e[\exp(\delta\tau_e)]<\infty$. 
\end{itemize}
\end{definition}
It is well known that these forms of recurrence hold for all states as soon as one state has the respective property. 
Irreducible positive recurrent chains admit a unique stationary probability distribution $\pi$ on $E$, 
given by 
\begin{align}\label{stat_dist}
\pi(\set{e'})=\frac{1}{\E_e[\tau_e]}\E_e\sqbr*{\sum_{k=0}^{\tau_e-1}\mathbf{1}_{\omega_k=e'}}
\end{align}
for $e,e'\in E$. 
The Markov measure $\Pro_\pi$ is then invariant and ergodic under the shift map $T$ on $\Omega$. 
See e.g.\ Chung~\cite{Ch_book} for proofs of these classical facts. 

For the sequel, we fix a coding map $E\ni e\mapsto g_e\in G$. 
Such a map allows us to define a stochastic process on $G$ by 
\begin{align*}
(Y_n)_n\colon\Omega\ni\omega\mapsto (g_{\omega_{n-1}})_n.
\end{align*}
This process is generally not a Markov chain on $G$ in the usual sense. 
Indeed, denoting by $\calS\subset G$ the image of the coding map, any generalized Markov measure on $\calS^\N$ can be obtained in this manner as the distribution of $(Y_n)_n$. 
Similarly, the induced random walk $(Y_n\dotsm Y_1x_0)_n$ on $X$ does not constitute a Markov chain. 
However, this flaw can be removed by embedding this random walk into a Markov chain on a larger space. 
\begin{definition}\label{def:action_chain}
Given a Markov chain on $E$ with transition kernel $P$, the \emph{action chain} is the Markov chain on $E\times X$ defined by the transition kernel $Q$ given by 
\begin{align*}
Q(e,x)=P(e,\cdot)\otimes \delta_{g_ex}
\end{align*}
for $(e,x)\in E\times X$. 
\end{definition}
The interpretation is that the $E$-coordinate contains the element to be applied next. 
A step into the future consists of the application of that group element to the $X$-coordinate and choosing the next element according to the transition kernel $P$ in the $E$-coordinate. 

It is evident by construction that in the $X$-coordinate of the action chain we obtain our random walks of interest of the form $(Y_n\dotsm Y_1x_0)_n$. 
A precise formulation of this statement is the following. 
\begin{lemma}
Let $\lambda$ be any distribution on $E$ and $x_0\in X$. 
Write $\Pro_{\lambda\otimes \delta_{x_0}}$ for the Markov measure on $(E\times X)^\N$ for the action chain starting from $\lambda\otimes \delta_{x_0}$, and $\operatorname{pr}_E\colon (E\times X)^\N\to E^\N$, $\operatorname{pr}_X\colon (E\times X)^\N\to X^\N$ for the projections onto all the $E$- and $X$-coordinates, respectively. 
Then the pushforward of $\Pro_{\lambda\otimes\delta_{x_0}}$ by $\operatorname{pr}_E$ is $\Pro_\lambda$ and the pushforward by $\operatorname{pr}_X$ is the distribution of $(Y_n\dotsm Y_1x_0)_n$, where $(Y_n)_n$ is the Markov chain on $E$ starting with $Y_1$ distributed according to $\lambda$. \qed
\end{lemma}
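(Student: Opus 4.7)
The lemma is essentially a bookkeeping statement about how the action chain encodes the $G$-random walk via its $X$-coordinate, and the plan is to verify both pushforward identities on the algebra of finite-dimensional cylinder sets, which characterize probability measures on product spaces by the $\pi$--$\lambda$ theorem.

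For the first identity, I would evaluate $\Pro_{\lambda\otimes\delta_{x_0}}$ on a basic cylinder $\{(e_0,y_0)\}\times\dots\times\{(e_n,y_n)\}\times(E\times X)^\N$ by directly unfolding the definition $Q((e,x),\cdot)=P(e,\cdot)\otimes\delta_{g_ex}$ from Definition~\ref{def:action_chain}. The Dirac factors force $y_0=x_0$ and $y_k=g_{e_{k-1}}y_{k-1}$ for $1\le k\le n$; otherwise the probability vanishes. Summing against all $(y_0,\dots,y_n)\in X^{n+1}$ thus collapses onto a single deterministic $X$-trajectory and leaves the weight $\lambda(\{e_0\})\,p_{e_n,e_{n-1}}\dotsm p_{e_1,e_0}$, which by~\eqref{arb_markov} is exactly $\Pro_\lambda$ on the cylinder $\{e_0\}\times\dots\times\{e_n\}\times E^\N$. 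This proves $(\operatorname{pr}_E)_*\Pro_{\lambda\otimes\delta_{x_0}}=\Pro_\lambda$.

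For the second identity, the same deterministic update shows that conditional on the $E$-path $(\omega_m)_m$, the $X$-path produced by the action chain starting at $(\omega_0,x_0)$ is deterministic with $y_0=x_0$ and $y_k=g_{\omega_{k-1}}\dotsm g_{\omega_0}x_0=g_{\omega|_k}x_0$ for $k\ge 1$. By the definition $Y_n=g_{\omega_{n-1}}$ one has $Y_n\dotsm Y_1=g_{\omega_{n-1}}\dotsm g_{\omega_0}=g_{\omega|_n}$, so the pushforward $(\operatorname{pr}_X)_*\Pro_{\lambda\otimes\delta_{x_0}}$ is the law of $(g_{\omega|_n}x_0)_n=(Y_n\dotsm Y_1x_0)_n$ under $\Pro_\lambda$, matching the claim (with the convention that the empty product at $n=0$ is the identity).

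Since the proof is a direct unfolding of definitions, I do not anticipate any substantive obstacle; the only thing requiring care is consistency with the right-to-left composition convention fixed at the start of the section and its compatibility with the extension $E^*\to G$ given there.
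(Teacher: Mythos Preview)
Your proposal is correct and matches the paper's treatment: the lemma is stated with a terminal \qed\ and no proof, the authors having just remarked that the claim is \enquote{evident by construction}. Your unfolding of the transition kernel $Q$ on cylinder sets is precisely the verification they leave implicit. One small wording issue: since $X$ is not discrete, speaking of \enquote{summing against all $(y_0,\dots,y_n)\in X^{n+1}$} is imprecise; the cleaner formulation is to take cylinders $\{e_0\}\times B_0\times\dots\times\{e_n\}\times B_n$ with Borel $B_k\subset X$, observe that the Dirac factors force the measure to concentrate on the deterministic trajectory $y_k=g_{\omega|_k}x_0$, and then set $B_k=X$ to recover the $E$-marginal---but the substance of your argument is unaffected.
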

\subsection{Renewal Measures}\label{subsec:renewal_meas}
We say that a word $w=e_{n-1}\ldots e_0\in E^*$ is \emph{\adm{e}{e'}-admissible} if $e_0=e$, $p_{e_k,e_{k-1}}>0$ for $1\le k\le n-1$ and $p_{e',e_{n-1}}>0$, and simply that it is \emph{admissible} if it is \adm{e}{e'}-admissible for some $e,e'\in E$. 
Further, we call $w$ an \emph{$e$--renewal word} if it is \adm{e}{e}-admissible and $e_k\neq e$ for $1\le k\le n-1$, and denote the set of $e$--renewal words by $E_e^\ren$. 
A sequence $\omega\in\Omega=E^\N$ is said to be admissible if all words $\omega|_n$ for $n\in\N$ are. 
The set of all admissible sequences is going to be denoted $E^\infty$, and $E_e^\infty$ is the subset of such sequences starting with $e$. 
\begin{definition}\label{def:renewal_meas}
Given a recurrent irreducible Markov chain on $E$ and a state $e\in E$, we define the measure $\tilde{\mu}_e$ on the set $E_e^\ren$ of $e$--renewal words by 
\begin{align*}
\tilde{\mu}_e(\set{w})&\coloneqq p_{e,e_{n-1}}p_w=p_{e,e_{n-1}}p_{e_{n-1},e_{n-2}}\dotsm p_{e_1,e}\\
&=\Pro_e\Bigl[\set{\omega\in\Omega\for\omega_1=e_1,\dots,\omega_{n-1}=e_{n-1},\tau_e(\omega)=n}\Bigr]
\end{align*}
for $w=e_{n-1}\ldots e_1e\in E_e^\ren$. 
Then the \emph{renewal measure} $\mu_e$ starting at $e\in E$ is defined to be the pushforward  of $\tilde{\mu}_e$ to $G$ via the coding map $E^*\to G,\,w\mapsto g_w$. 
\end{definition}
Note that recurrence implies that $\Pro_e$-a.s.\ we have $\tau_e<\infty$, so that under this assumption $\tilde{\mu}_e$ and $\mu_e$ are probability measures. 

The following simple lemma formalizes the fact that consecutive excursions of a Markov chain are i.i.d. 
\begin{lemma}\label{lem:isom}
For a recurrent irreducible Markov chain on $E$ and any state $e\in E$, the map 
\begin{align}\label{isom}
(E_e^\infty,\Pro_e)&\to\Bigl((E_e^\ren)^\N,\tilde{\mu}_e^{\otimes\N}\Bigr) \\
\omega&\mapsto\Bigl(\omega_{\tau_e^{n+1}-1}\ldots\omega_{\tau_e^n}\Bigr)_n\nonumber
\end{align}
is an isomorphism \textup{(}mod $0$\textup{)} of probability spaces. 
\end{lemma}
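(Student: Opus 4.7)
The plan is to verify the three ingredients of an isomorphism mod~$0$: (a) the map is well-defined on a full-measure subset of $E_e^\infty$, (b) it pushes $\mathbb{P}_e$ forward to $\tilde{\mu}_e^{\otimes \N}$, and (c) it admits a measurable inverse on a full-measure subset of the target.

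First I would address well-definedness. Irreducibility and recurrence imply $\mathbb{P}_e[\tau_e<\infty]=1$, and by the strong Markov property together with induction, $\mathbb{P}_e[\tau_e^n<\infty\text{ for all }n\in\N]=1$. On this set the excursion decomposition $\omega\mapsto(\omega_{\tau_e^{n+1}-1}\ldots\omega_{\tau_e^n})_n$ is well defined, and each block lies in $E_e^{\ren}$ since by construction the block begins and ends adjacent to an occurrence of $e$, contains no interior $e$, and all consecutive pairs have positive transition probability. Measurability of the map follows from measurability of each hitting time $\tau_e^n$ as a function on $\Omega$ together with measurability of the shifted coordinate projections.

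Second I would check the pushforward identity. It suffices to verify it on cylinder sets in $(E_e^{\ren})^\N$. Fix renewal words $w^{(0)},\dots,w^{(N-1)}$ with $w^{(i)}=e_{\ell_i-1}^{(i)}\ldots e_1^{(i)}e$ of length $\ell_i$. The preimage of the cylinder $\{w^{(0)}\}\times\dots\times\{w^{(N-1)}\}\times(E_e^{\ren})^\N$ consists of those $\omega\in E_e^\infty$ with $\tau_e^i=\ell_0+\dots+\ell_{i-1}$ for $1\le i\le N$ and with $\omega$ prescribed on $[0,\tau_e^N)$ by the concatenation of the $w^{(i)}$'s. Using the Markov property at each hitting time $\tau_e^i$ (the state at such a time is $e$ with probability one), the $\mathbb{P}_e$-measure of this preimage factorizes as
\begin{align*}
\prod_{i=0}^{N-1}p_{e,e_{\ell_i-1}^{(i)}}p_{e_{\ell_i-1}^{(i)},e_{\ell_i-2}^{(i)}}\dotsm p_{e_1^{(i)},e}=\prod_{i=0}^{N-1}\tilde{\mu}_e\bigl(\{w^{(i)}\}\bigr),
\end{align*}
which matches $\tilde{\mu}_e^{\otimes\N}$ on the chosen cylinder. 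Since such cylinders generate the product $\sigma$-algebra, the pushforward equals $\tilde{\mu}_e^{\otimes\N}$ on the nose.

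Third I would construct the inverse. Define the concatenation map
\begin{align*}
\Psi\colon(E_e^{\ren})^\N\to E_e^\infty,\quad(w^{(n)})_n\mapsto w^{(0)}w^{(1)}w^{(2)}\dotsm
\end{align*}
(read from the right in the convention of the paper), i.e.\ place the letters of $w^{(i)}$ into the positions $[\tau_e^i,\tau_e^{i+1})$. This map is measurable, sends each sequence of renewal words to an admissible sequence starting with $e$, and by construction inverts the excursion map on the full-measure set where every block is a genuine $e$--renewal word (that is, the block ends exactly at the next occurrence of $e$). Conversely, on the full-measure set of $\omega$'s with $\tau_e^n<\infty$ for all $n$, the composition with the excursion map is the identity because consecutive blocks by definition fit together along the visits to $e$.

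The only mildly delicate step is the measure-preservation computation, since one must justify using the strong Markov property at a random array of stopping times simultaneously; the standard way, which I would spell out, is to induct on $N$ and apply the (ordinary) strong Markov property once at $\tau_e^{N-1}$, using the preceding step's equality on cylinders of length $N-1$. Everything else is bookkeeping about admissible words and stopping times.
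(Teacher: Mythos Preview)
Your proof is correct and follows essentially the same route as the paper's own argument, only with considerably more detail. The paper's proof merely observes that the map is a well-defined bijection on the $\Pro_e$--full measure set where all hitting times are finite, and that measure preservation follows on cylinder sets $\set{w_0}\times\dots\times\set{w_N}\times(E_e^\ren)^\N$ directly from the definition of $\tilde{\mu}_e$; you spell out the same bijection, the same cylinder computation, and the explicit concatenation inverse.
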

\begin{proof}
On the $\Pro_e$--full measure subset 
\begin{align*}
\set{\omega\in E_e^\infty\for \tau_e^n(\omega)<\infty\text{ for all }n\in\N}
\end{align*}
of $\Omega$ the given map is a well-defined bijection. 
To see that it is measure-preserving it suffices to consider cylinder sets of the form $\set{w_0}\times \dots\times \set{w_N}\times (E_e^\ren)^\N$ for $e$--renewal words $w_0,\dots,w_N$. 
But for such sets the statement follows directly by construction of $\tilde{\mu}_e$. 
\end{proof}
Before moving on, let us shed some light on the relationship between the various renewal measures $\mu_e$ on $G$, knowledge of which will be of interest later on. 
For this, we denote by $G_e^+$ (resp.\ $G_e$) the closed subsemigroup (resp.\ subgroup) of $G$ generated by the support of $\mu_e$, and by $G_\calS$ the image of the set of admissible words under the coding map $E\to G$. 
Note that $G_\calS$ is in general not closed under multiplication. 
In case $G$ is real algebraic, we write $H_e$ and $H_\calS$ for the Zariski closures of $G_e^+$ and $G_\calS$, respectively. 
\begin{lemma}\label{lem:Z_closures_conjugate}
Assume the Markov chain on $E$ is irreducible and recurrent. 
\begin{enumerate}
\item Let $c\in E^*$ be \adm{e}{e'}-admissible and $c'\in E^*$ be \adm{e'}{e}-admissible. Then the semigroups $G_e^+$ and $G_{e'}^+$ satisfy 
\begin{align*}
g_{c'}G_{e'}^+g_c\subset G_e^+.
\end{align*}
\end{enumerate}
If $G$ is real algebraic, we additionally have the following. 
\begin{enumerate}
\setcounter{enumi}{1}
\item The groups $H_e$ and $H_{e'}$ are conjugate inside $H_\calS$. More precisely, with $c,c'$ as in \textup{(i)} we have 
\begin{align*}
g_{c'}H_{e'}g_{c'}^{-1}=g_c^{-1}H_{e'}g_c=H_e.
\end{align*}
\item If there exists $\tilde{e}\in E$ with both $e\tilde{e}$ and $e'\tilde{e}$ admissible, then $H_e=H_{e'}$. 
\item If there exists $\tilde{e}\in E$ with $e\tilde{e}$ admissible for all $e\in E$, then all $H_e$ coincide and $H_\calS$ is contained in their normalizer. 
\item If there exists $\tilde{e}\in E$ with $\tilde{e}e$ admissible for all $e\in E$, then $H_e=H_\calS$ for all $e\in E$. In particular, $H_\calS$ is a group. 
\end{enumerate}
\end{lemma}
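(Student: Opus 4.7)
My plan is to establish (i)--(v) in order, with (i) being a purely combinatorial observation about concatenating admissible paths, (ii) upgrading it via Zariski closures, and (iii)--(v) amounting to clever choices of the path $c'$ in (ii) so that the conjugator $g_{c'}$ already lies inside $H_{e'}$. For (i), I would observe that, given any $e'$-renewal words $w_1, \ldots, w_k$, the concatenation $c' w_k \cdots w_1 c$ describes a path starting at $e$, passing through $e'$ at each word boundary, and returning to $e$, hence is $(e \to e)$-admissible. Decomposing it at its hitting times of $e$ factors it into a product of $e$-renewal words, which shows $g_{c'}(g_{w_k}\cdots g_{w_1})g_c \in G_e^+$. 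Continuity of left and right translation together with closedness of $G_e^+$ then upgrade this to $g_{c'}G_{e'}^+ g_c \subset G_e^+$.

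For (ii), I would invoke the standard fact that the Zariski closure of a subsemigroup of a real algebraic group is itself a subgroup, so $H_e$ and $H_{e'}$ are genuine groups. Zariski-closing the inclusion in (i) and its symmetric counterpart yields both $g_{c'}H_{e'}g_c \subset H_e$ and $g_c H_e g_{c'} \subset H_{e'}$, whose combination gives the sandwich $(g_{c'}g_c)\,H_e\,(g_{c'}g_c) \subset g_{c'}H_{e'}g_c \subset H_e$. Since $c'c$ is an $(e \to e)$-admissible loop, $g_{c'}g_c$ lies in $G_e^+ \subset H_e$, so multiplication by it is a bijection of $H_e$; this forces both inclusions to be equalities, yielding $g_{c'}H_{e'}g_c = H_e$. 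The two conjugation formulas then follow by multiplying this equality by $g_c^{-1}$ (resp.\ $g_{c'}^{-1}$) and absorbing the loops $g_{c'}g_c \in H_e$ and $g_c g_{c'} \in H_{e'}$.

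For (iii), the key is to arrange the conjugator $g_{c'}$ from (ii) to sit inside $H_{e'}$ itself. Using irreducibility I would pick a simple $(e' \to \tilde e)$-admissible word $\gamma$ (without repetition, hence without interior visits to $e'$), and set $c' := \tilde e \gamma$; the hypothesis $p_{e', \tilde e} > 0$ makes $c'$ an $e'$-renewal word, while $p_{e, \tilde e} > 0$ makes it $(e' \to e)$-admissible. The first property gives $g_{c'} \in G_{e'}^+ \subset H_{e'}$, and feeding $c'$ into (ii) yields $H_e = g_{c'}H_{e'}g_{c'}^{-1} = H_{e'}$. Item (iv) is then nearly immediate: its hypothesis entails that of (iii) for every pair $(e, e')$, collapsing all $H_e$ to a common group $H$; and for any admissible word $w$, say $(e \to e')$-admissible, the relation $g_w^{-1}H_{e'}g_w = H_e$ from (ii) becomes $g_w \in N_G(H)$, so $H_\calS \subset N_G(H)$ by Zariski closure.

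For (v) the roles of $e$ and $\tilde e$ reverse and the construction becomes even cleaner: the hypothesis $p_{\tilde e, \cdot} > 0$ makes any simple $(\tilde e \to e)$-admissible path $\gamma$ automatically a $\tilde e$-renewal word, and (ii) applied with $c = \gamma$ then yields $H_e = H_{\tilde e}$ for every $e$. To upgrade this to $H_e = H_\calS$, given any admissible word $w$ with starting state $e_0$, I would pick a simple $(\tilde e \to e_0)$-admissible path $\beta$: the concatenation $w\beta$ is then $(\tilde e \to \tilde e)$-admissible and decomposes into $\tilde e$-renewal excursions, so $g_w g_\beta \in G_{\tilde e}^+ \subset H_{\tilde e}$. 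Since $\beta$ is itself a $\tilde e$-renewal word, $g_\beta \in H_{\tilde e}$, whence $g_w \in H_{\tilde e}$, and taking Zariski closure gives $H_\calS \subset H_{\tilde e}$, completing the identification $H_\calS = H_{\tilde e}$. The main subtlety I expect throughout is the semigroup-to-group passage: without the fact that the Zariski closure of a subsemigroup of an algebraic group is a subgroup, neither the sandwich-to-equality step in (ii) nor the conjugator-inside-$H_{e'}$ trick used in (iii)--(v) would be available.
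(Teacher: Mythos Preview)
Your proof is correct and follows essentially the same route as the paper. A few minor remarks on presentation: in (iii) and (v) you insist on taking \emph{simple} paths so that $c'$ (resp.\ $\beta$) is a genuine renewal word, but this is unnecessary---any $(e'\to e')$-admissible word already decomposes into renewal excursions, so $g_{c'}\in G_{e'}^+\subset H_{e'}$ without the simplicity requirement (and this avoids a small case distinction when $\tilde e=e'$). In (ii) your sandwich argument $(g_{c'}g_c)H_e(g_{c'}g_c)\subset g_{c'}H_{e'}g_c\subset H_e$ is a slight rearrangement of the paper's observation that $g_{c'}H_{e'}g_c$, being the Zariski closure of a semigroup, is itself a group and hence equals its own conjugate $g_{c'}H_{e'}g_{c'}^{-1}$. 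Your argument for (iv), applying (ii) directly to the word $w$, is in fact a bit more direct than the paper's, which passes through the auxiliary word $w\tilde e$.
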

\begin{proof}
For (i), simply note that for every \adm{e'}{e'}-admissible word $w\in E^*$ the word $c'wc$ is \adm{e}{e}-admissible. 

For (ii), taking the Zariski closure of both sides of the inclusion in (i), we get $g_{c'}H_{e'}g_c\subset H_e$. 
Since the word $cc'$ is \adm{e'}{e'}-admissible, $g_{c'}G_{e'}^+g_c$ is a semigroup and hence its Zariski closure $g_{c'}H_{e'}g_c$ is a subgroup of $H_e$. 
This implies
\begin{align*}
g_{c'}H_{e'}g_c=g_{c'}H_{e'}g_{c'}^{-1}=g_c^{-1}H_{e'}g_c\subset H_e.
\end{align*}
By the symmetric argument, we also have $g_cH_eg_c^{-1}\subset H_{e'}$, which in combination with the above gives (ii). 

For (iii) note that existence of such an element $\tilde{e}$ implies that $c'$ can be chosen to be both \adm{e'}{e'}- and \adm{e'}{e}-admissible. 
Then $g_{c'}\in H_{e'}$ and we conclude using~(ii). 

In (iv), all the $H_e$ coincide due to (iii). 
For every admissible word $w\in E^*$, the word $w\tilde{e}$ is \adm{\tilde{e}}{e}-admissible for some $e\in E$. 
Thus, using $g_{\tilde{e}}\in H_{\tilde{e}}$ and part (ii) we find 
\begin{align*}
g_wH_{\tilde{e}}g_w^{-1}=g_{w\tilde{e}}H_{\tilde{e}}g_{w\tilde{e}}^{-1}=H_e=H_{\tilde{e}}.
\end{align*}
This shows that $G_\calS$ is contained in the normalizer of $H_{\tilde{e}}$. 
Hence, so is $H_\calS$. 

In the setting of (v), $H_e=H_{\tilde{e}}$ for all $e\in E$ again follows from (iii). 
Clearly, we also have $H_{\tilde{e}}\subset H_\calS$. 
For the reverse inclusion let $w\in E^*$ be any admissible word, say \adm{e}{e'}-admissible, and choose a \adm{\tilde{e}}{e}-admissible word $c\in E^*$. 
Then both $c$ and $wc$ are \adm{\tilde{e}}{\tilde{e}}-admissible. 
This implies $g_w\in H_{\tilde{e}}$, and hence $H_\calS\subset H_{\tilde{e}}$. 
\end{proof}
\subsection{Stationary Measures}\label{subsec:markov_stat_bootstrap}
Next, we describe the structure of ergodic stationary measures for the action chain in terms of ergodic stationary measures for the renewal measures $\mu_e$. 
\begin{lemma}\label{lem:markov_stat_bootstrap}
Suppose the Markov chain on $E$ is irreducible and positive recurrent and let $\pi$ be its stationary distribution. 
If $\nu$ is a stationary probability measure for the action chain on $E\times X$, then 
\begin{align}\label{stat_decomp}
\nu=\sum_{e\in E}\pi(\set{e})\delta_e\otimes \nu_e,
\end{align}
where for each $e\in E$, $\nu_e$ is a $\mu_e$-stationary probability measure on $X$, satisfying 
\begin{align}\label{action_stat}
\pi(\set{e})\nu_e=\sum_{e'\in E}\pi(\set{e'})p_{e,e'}(g_{e'})_*\nu_{e'}.
\end{align}
If $\nu$ is ergodic, then the $\nu_e$ are $\mu_e$-ergodic. 

Furthermore, if $c\in E^*$ is \adm{e'}{e}-admissible, we have $(g_c)_*\nu_{e'}\ll\nu_e$, and if $\nu_e$ is $G_e^+$-invariant, then $\nu_e$ and $(g_c)_*\nu_{e'}$ belong to the same measure class. 
\end{lemma}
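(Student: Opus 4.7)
The plan is to decompose $\nu$ by disintegration over its $E$-projection, extract the stationarity equation \eqref{action_stat} from $Q$-stationarity, establish $\mu_e$-stationarity of $\nu_e$ by iterating this equation along excursions from state $e$, and deduce the absolute continuity statements by retaining single-path terms in the same iteration.

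For the decomposition, the $E$-marginal of $\nu$ is stationary for the transition kernel $P$, hence equals $\pi$ by uniqueness of the stationary distribution of an irreducible positive recurrent chain. Disintegrating along $E \times X \to E$ then gives $\nu = \sum_e \pi(\set{e})\delta_e \otimes \nu_e$ for probability measures $\nu_e$ on $X$, and testing $Q$-stationarity against indicators of $\set{e} \times A$ produces \eqref{action_stat} directly.

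For the $\mu_e$-stationarity, set $\phi(e) := \pi(\set{e})\nu_e$ and iterate \eqref{action_stat}, separating at each step the contribution of paths that have returned to $e$ from those that have not. After $n$ iterations this yields
\begin{align*}
\phi(e) = \sum_{w \in E_e^\ren,\,\ell(w)\leq n} \tilde{\mu}_e(\set{w})(g_w)_*\phi(e) + R_n,
\end{align*}
where $R_n$ is a non-negative sub-measure of total mass bounded by $\Pro_e[\tau_e > n]$, which tends to $0$ by recurrence. Passing to the limit and dividing by $\pi(\set{e}) > 0$ gives $\nu_e = \mu_e * \nu_e$. Conceptually this says that $\delta_e \otimes \nu_e$ is invariant for the Markov chain induced by $Q$ on $\set{e}\times X$, whose kernel $(e,x) \mapsto \int \delta_{(e,gx)}\dd\mu_e(g)$ is computed from the strong Markov property at $\tau_e$. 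Ergodicity transfers through the same induced-chain picture by the general principle that inducing on a recurrent set preserves ergodicity: a non-trivial $\mu_e$-ergodic decomposition of $\nu_e$ would, by reconstructing the remaining components through the Markov property, produce a non-trivial decomposition of $\nu$ into $Q$-invariant probability measures.

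For absolute continuity, given $c = e_{n-1}\ldots e_0$ that is $\adm{e'}{e}$-admissible, iterate \eqref{action_stat} $n$ times and retain only the term corresponding to the path $e_0 \to e_1 \to \cdots \to e_{n-1} \to e$; this gives
\begin{align*}
\pi(\set{e})\nu_e \geq \pi(\set{e'})\, p_{e,e_{n-1}}\, p_c\, (g_c)_*\nu_{e'}
\end{align*}
as measures on $X$, with strictly positive coefficient by admissibility and irreducibility, hence $(g_c)_*\nu_{e'} \ll \nu_e$. If moreover $\nu_e$ is $G_e^+$-invariant, choose an $\adm{e}{e'}$-admissible word $c''$ (using irreducibility); the swapped version of the argument yields $(g_{c''})_*\nu_e \ll \nu_{e'}$, so $(g_c g_{c''})_*\nu_e \ll (g_c)_*\nu_{e'}$. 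The concatenated word representing $g_c g_{c''}$ is $\adm{e}{e}$-admissible and so decomposes into $e$-renewal words; thus $g_c g_{c''} \in G_e^+$ and $G_e^+$-invariance of $\nu_e$ forces $(g_c g_{c''})_*\nu_e = \nu_e$, yielding the reverse inclusion $\nu_e \ll (g_c)_*\nu_{e'}$ and the equivalence of measure classes. I expect the combinatorial bookkeeping for the $\mu_e$-stationarity iteration to be the main delicate point; the induced-chain route is cleaner but relies on importing the strong Markov property and a standard preservation-of-ergodicity result for induced chains on recurrent sets.
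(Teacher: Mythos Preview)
Your proposal is correct and follows essentially the same route as the paper: the decomposition, equation \eqref{action_stat}, the absolute continuity via iteration, and the measure-class argument via concatenating an \adm{e}{e'}-admissible word are all handled identically. The only difference is that where you spell out the $\mu_e$-stationarity by explicitly iterating \eqref{action_stat} and controlling the remainder (and sketch ergodicity via the induced-chain principle), the paper simply invokes \cite[Lemma~3.4]{BQ3} applied to the $Q$-recurrent sets $\set{e}\times X$, which packages both stationarity and ergodicity of the induced kernel in one citation.
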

\begin{proof}
For any measurable subset $Y\subset X$ and $e\in E$ we have by stationarity of $\nu$ 
\begin{align*}
\pi(\set{e})\nu_e(Y)&=\nu(\set{e}\times Y)=\nu Q(\set{e}\times Y)=\int_{E\times X}Q\bigl((e',x),\set{e}\times Y\bigr)\dd\nu(e',x)\\
&=\sum_{e'\in E}\pi(\set{e'})p_{e,e'}\nu_{e'}(g_{e'}^{-1}Y),
\end{align*}
which is precisely \eqref{action_stat}. 
Specializing to $Y=X$ shows that the projection of $\nu$ to $E$ is a stationary probability measure for the abstract Markov chain on $E$. 
By uniqueness, it follows that this projection is $\pi$, or in other words that the $\nu_e$ are probability measures. 

The fact that the $\nu_e$ are $\mu_e$-stationary (and $\mu_e$-ergodic if $\nu$ is ergodic) follows from \cite[Lemma~3.4]{BQ3} applied to the $Q$-recurrent subsets $\set{e}\times X$ of $E\times X$. 

The first statement about absolute continuity follows by noting that as a consequence of \eqref{action_stat} and by induction, for every $e\in E$ and $n\in\N$ we have 
\begin{align*}
\pi(\set{e})\nu_e=\sum_{\substack{w=e_{n-1}\ldots e_0\in E^*\\\adm{e'}{e}\text{-admissible}}}\pi(\set{e'})p_{e,e_{n-1}}p_w(g_w)_*\nu_{e'},
\end{align*}
with all occurring factors positive. 
For the last claim let $c'\in E^*$ be \adm{e}{e'}-admissible. 
Then we have $g_{cc'}\in G_e^+$, so that the above and $G_e^+$-invariance of $\nu_e$ imply 
\begin{align*}
\nu_e=(g_{cc'})_*\nu_e\ll (g_c)_*\nu_{e'}\ll \nu_e.&\qedhere
\end{align*}
\end{proof}
\subsection{Equidistribution}\label{subsec:markov_equidist_bootstrap}
This subsection contains the joint equidistribution results alluded to in~\S\ref{subsec:markov_intro}, which represent a key ingredient of our approach. 
\begin{proposition}\label{prop:markov_equidist_bootstrap}
Suppose the Markov chain on $E$ is irreducible and positive recurrent and denote by $\pi$ its stationary distribution. 
Let $x_0\in X$, $e\in E$ and $m$ be a probability measure on $X$ invariant under $g_w$ for every admissible word $w\in E^*$ starting with $e$. 
If the trajectory $(g_{b|_n}x_0)_n$ equidistributes towards $m$ for $\mu_e^{\otimes \N}$-a.e.\ $b\in B$, then $(g_{\omega|_n}x_0,T^n\omega)_n$ equidistributes towards $m\otimes \Pro_\pi$ for $\Pro_e$-a.e.\ $\omega\in\Omega$. 
\end{proposition}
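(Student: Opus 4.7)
The plan is to exploit the i.i.d.\ renewal structure of Lemma~\ref{lem:isom} to reduce the claim to a joint law of large numbers for the sub-sampled walk $y_n \coloneqq g_{\omega|_{\tau_e^n}}x_0$ along the hitting times of $e$, to which the hypothesis directly applies.

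By a standard density argument it suffices to test against product functions $F(x,\omega) = f(x)h(\omega)$ with $f\in C_c(X)$ and $h$ a cylinder function on $\Omega$ depending only on coordinates $\omega_0,\ldots,\omega_r$. Under $\Pro_e$, the excursions $w_n = \omega_{\tau_e^{n+1}-1}\ldots\omega_{\tau_e^n}$ form an i.i.d.\ sequence with law $\tilde{\mu}_e$ by Lemma~\ref{lem:isom}; put $L_n = \ell(w_n)$ and note $y_n = g_{w_{n-1}}\dotsm g_{w_0}x_0$. For large $N$, let $M = M(N,\omega)$ be the number of excursions completed by time $N$. The renewal SLLN gives $M/N \to \pi(\set{e}) = 1/\E_e[\tau_e]$ a.s., and integrability of $L_0$ makes the boundary slice $[\tau_e^M, N)$ contribute $o(N)$ to the Birkhoff sum. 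Regrouping by excursions yields
\[
\sum_{k=0}^{N-1} f(g_{\omega|_k}x_0)h(T^k\omega) = \sum_{n=0}^{M-1} \Phi(y_n, w_n, \ldots, w_{n+K}) + o(N)\quad\text{a.s.,}
\]
where $K = r$ suffices (since any $h$-evaluation inspects $r+1$ consecutive positions, hence at most $r+1$ consecutive blocks), and
\[
\Phi(y, v_0, \ldots, v_K) = \sum_{j=0}^{\ell(v_0)-1} f(g_{v_0^{(j)}}y)\,h_j(v_0,\ldots,v_K),
\]
with $v_0^{(j)}$ the word formed by the first $j$ letters of $v_0$ (in temporal order), and $h_j(v_0,\ldots,v_K)$ the value of $h$ at the $j$-th shift of the concatenation $v_0 v_1 \dotsm v_K$.

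Since $v_0^{(j)}$ is an admissible word starting with $e$, the invariance hypothesis gives $(g_{v_0^{(j)}})_*m = m$, and therefore
\[
\int_X \Phi(y, v_0, \ldots, v_K)\,dm(y) = \int f\,dm \cdot \sum_{j=0}^{\ell(v_0)-1} h_j(v_0,\ldots,v_K).
\]
Integrating further over $(v_0,\ldots,v_K) \sim \tilde{\mu}_e^{\otimes(K+1)}$ and invoking the classical Kac formula $\int h\,d\Pro_\pi = \pi(\set{e})\,\E_e\bigl[\sum_{j=0}^{\tau_e-1} h(T^j\omega)\bigr]$ together with Lemma~\ref{lem:isom}, we obtain
\[
\int\int\Phi\,dm\,d\tilde{\mu}_e^{\otimes(K+1)} = \frac{1}{\pi(\set{e})}\int f\,dm \cdot \int h\,d\Pro_\pi.
\]
Combined with $M/N \to \pi(\set{e})$, the target equidistribution reduces to the joint LLN
\[
\frac{1}{M}\sum_{n=0}^{M-1} \Phi(y_n, w_n, \ldots, w_{n+K}) \;\longrightarrow\; \int\int\Phi\,dm\,d\tilde{\mu}_e^{\otimes(K+1)}\quad\text{a.s.}
\]

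The joint LLN is the main obstacle. The crucial observation is that $y_n$ is $\sigma(w_0,\ldots,w_{n-1})$-measurable and therefore independent of the future $(w_n, w_{n+1}, \ldots)$. Decomposing $\Phi(y_n, w_n, \ldots, w_{n+K})$ by the value $v = (v_0, \ldots, v_K)$ of the tuple of excursions, each multi-index with $p_v \coloneqq \prod_i \tilde{\mu}_e(\set{v_i}) > 0$ yields a centered variable $\xi_n^v = \Phi(y_n, v)(\mathbf{1}_{(w_n,\ldots,w_{n+K}) = v} - p_v)$ whose conditional mean given $\sigma(w_0, \ldots, w_{n-1})$ vanishes and which is uncorrelated to $\xi_{n'}^v$ whenever $\abs{n - n'} > K$. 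A variance estimate along a polynomial subsequence together with Borel--Cantelli yields $\frac{1}{M}\sum_n \xi_n^v \to 0$ a.s.; coupled with the hypothesis $\frac{1}{M}\sum_n \Phi(y_n, v)\to\int\Phi(\cdot, v)\,dm$, this delivers the LLN for each fixed $v$. As $\Phi$ is unbounded due to the variable block length $L_n$, a truncation to $\set{\ell(v_0)\le T}$ is required; the resulting error is dominated by $\frac{1}{M}\sum_n L_n \mathbf{1}_{L_n > T}$, which tends a.s.\ to $\E[L_0\mathbf{1}_{L_0 > T}] \to 0$ as $T \to \infty$ by integrability of $L_0$.
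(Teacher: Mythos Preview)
Your plan is correct and follows the same renewal--Kac strategy as the paper, but the technical implementation of the ``joint LLN'' step differs. The paper isolates this step as Lemma~\ref{lem:equidist_reduction}(i) (which is \cite[Proposition~5.1]{SW}): a martingale with bounded increments is built out of conditional expectations, and Doob's law of large numbers gives $\tfrac1n M_n\to 0$. This joint equidistribution of $(y_n,(w_{n+m})_m)$ on $X\times(E_e^\ren)^\N$ is then tested against functions $F_\ell$ indexed by the \emph{position $\ell$ within the current excursion}; summing over $\ell<L$ and letting $L\to\infty$ (with the Kac identity $\sum_\ell\Pro_e[\tau_e>\ell]=\E_e[\tau_e]$ and the refined version for the $\Omega$-component) yields the result. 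Your organisation is dual: you decompose by the \emph{value} $v$ of the excursion tuple rather than by position, and replace the martingale argument by a second-moment estimate along a polynomial subsequence plus Borel--Cantelli. Both routes exploit exactly the same independence of $y_n$ from $(w_n,w_{n+1},\dots)$ and the same invariance of $m$; the paper's martingale lemma is slicker and reusable, while your variance bound is more elementary. One small point to tighten in your write-up: when $E$ is infinite the sum over $v$ (even after truncating $\ell(v_0)\le T$) is still infinite, so you need a further approximation by finitely many $v$'s with total mass close to~$1$---this is routine but should be said.
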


In the proof of Proposition~\ref{prop:markov_equidist_bootstrap} we will need part (i) of the following technical lemma. 
Part (ii) will be used in~\S\ref{sec:fractals} about Diophantine approximation on fractals. 
\begin{lemma}\label{lem:equidist_reduction}
\leavevmode
\begin{enumerate}
\item \textup{(}\cite[Proposition~5.1]{SW}\textup{)} Let $\Pro=\mu^\N$ be the Bernoulli measure on $\Omega$ associated to a probability measure $\mu$ on $E$. 
Assume that $(g_{\omega|_n}x_0)_n$ equidistributes towards a probability measure $m$ on $X$ for $\Pro$-a.e.\ $\omega\in\Omega$. 
Then $(g_{\omega|_n}x_0,T^n\omega)_n$ equidistributes towards $m\otimes \Pro$ for $\Pro$-a.e.\ $\omega\in\Omega$. 
\item Denote by $\pi$ the stationary distribution of a positive recurrent Markov chain on $E$ and let $\lambda$ be any starting distribution on $E$. 
Assume that $(\omega_n,g_{\omega|_n}x_0)_n$ equidistributes towards a probability measure on $E\times X$ of the form 
\begin{align*}
\sum_{e\in E}\pi(\set{e})\delta_e\otimes m_e
\end{align*}
for $\Pro_\lambda$-a.e.\ $\omega\in\Omega$. 
Then $(\omega_n,g_{\omega|_n}x_0,T^n\omega)_n$ equidistributes towards 
\begin{align*}
\sum_{e\in E}\pi(\set{e})\delta_e\otimes m_e\otimes \Pro_e
\end{align*}
for $\Pro_\lambda$-a.e.\ $\omega\in\Omega$. 
\end{enumerate}
\end{lemma}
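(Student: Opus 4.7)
The plan is to reduce the question to product test functions via a standard approximation argument, exploit the Markov property to replace the future-shifted factor $\phi(T^k\omega)$ by its conditional expectation $\E_{\omega_k}[\phi]$, and control the resulting error by a martingale strong law. Part~(i) I would simply invoke from \cite[Proposition~5.1]{SW}, since it is the i.i.d.\ Bernoulli case and its proof is given there.

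By a standard density argument (approximating $C_b(E\times X\times\Omega)$ by tensor-product functions after restricting to the tight family of empirical measures), it suffices to prove, for every $h\in C_b(E\times X)$ and every $\phi\in C_b(\Omega)$ depending only on the coordinates $\omega_0,\dots,\omega_{N-1}$, the convergence
\begin{align*}
\frac1n\sum_{k=0}^{n-1}h(\omega_k,g_{\omega|_k}x_0)\phi(T^k\omega)\longrightarrow\sum_{e\in E}\pi(\set{e})\br*{\int_\Omega\phi\dd\Pro_e}\br*{\int_X h(e,x)\dd m_e(x)}
\end{align*}
for $\Pro_\lambda$-a.e.\ $\omega$. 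Setting $\tilde\phi(e)\coloneqq\E_e[\phi]$ and using the Markov property $\E[\phi(T^k\omega)\mid\mathcal F_k]=\tilde\phi(\omega_k)$ for the natural filtration $\mathcal F_k=\sigma(\omega_0,\dots,\omega_k)$, I split the Cesàro sum as $A_n+R_n$ with $A_n=\tfrac1n\sum_{k<n}h(\omega_k,g_{\omega|_k}x_0)\tilde\phi(\omega_k)$ and $R_n=\tfrac1n\sum_{k<n}M_k$, where $M_k=h(\omega_k,g_{\omega|_k}x_0)(\phi(T^k\omega)-\tilde\phi(\omega_k))$.

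Since the map $(e,x)\mapsto h(e,x)\tilde\phi(e)$ is bounded and continuous on $E\times X$, applying the standing hypothesis to it delivers $\Pro_\lambda$-a.s.\ convergence of $A_n$ to the desired right-hand side by definition of $\tilde\phi$. It remains to show $R_n\to0$ a.s. Each $M_k$ is uniformly bounded by $2\norm{h}_\infty\norm{\phi}_\infty$, is $\mathcal F_{k+N-1}$-measurable, and satisfies $\E[M_k\mid\mathcal F_k]=0$.

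The main obstacle is that consecutive $M_k$ depend on overlapping $N$-windows of coordinates, so they do \emph{not} form a martingale-difference sequence with respect to $(\mathcal F_k)_k$. To circumvent this I would decompose the index set into $N$ residue classes modulo $N$: for each fixed $j\in\set{0,\dots,N-1}$ the sequence $(M_{mN+j})_{m\geqs0}$ \emph{is} a bounded martingale-difference sequence, this time with respect to $(\mathcal F_{mN+j})_m$. The $L^2$ martingale strong law of large numbers then yields $\tfrac1M\sum_{m<M}M_{mN+j}\to0$ a.s., and summing these $N$ contributions (with a factor of $1/N$ coming from comparing $\lfloor n/N\rfloor$ with $n$) gives $R_n\to0$ a.s. Apart from this residue-class splitting the argument is the Markovian analogue of Simmons--Weiss' proof of~(i).
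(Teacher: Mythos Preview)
Your argument is essentially correct and is a legitimate alternative to the paper's proof, though the route differs in how the martingale step is organized. The paper does \emph{not} split into residue classes. Instead, after reducing to a single test function $\varphi$ depending on finitely many $\Omega$-coordinates and subtracting off $\varphi_X(e,x)=\E_e[\varphi(e,x,\cdot)]$, it builds the Doob-type martingale
\[
M_n=\sum_{k\ge 0}\E_\lambda\bigl[h(z_k)\,\big|\,\mathcal B_n\bigr],
\]
observes that for $k\le n-N$ the conditional expectation equals $h(z_k)$ while for $k\ge n$ it vanishes by the Markov property, so $M_n$ differs from $\sum_{k<n}h(z_k)$ by at most $2N\norm{h}_\infty$, and then invokes the bounded-increment martingale SLLN once. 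Your residue-class decomposition is more elementary and avoids the Doob construction, at the cost of running $N$ separate SLLN arguments; the paper's approach packages everything into one martingale. Both ideas trace back to the same Kolmogorov--Doob circle of techniques.

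Two small points worth tightening. First, your stated filtration for the residue-class MDS is off by a shift: $M_{mN+j}$ depends on coordinates up to $\omega_{(m+1)N+j-1}$, so it is not $\mathcal F_{mN+j}$-measurable; take instead $\mathcal G_m=\mathcal F_{(m+1)N+j-1}$, and then $\E[M_{mN+j}\mid\mathcal G_{m-1}]=0$ follows from $\E[M_{mN+j}\mid\mathcal F_{mN+j}]=0$ by the tower property. Second, your reduction to tensor-product test functions via tightness is correct in spirit but deserves a line of justification, since $\Omega=E^{\N}$ need not be locally compact; the paper handles this by introducing the separable class $C_{cf}$ of functions compactly depending on finitely many coordinates and citing the convergence-determining criteria in Ethier--Kurtz, which lets one fix a countable family of test functions upfront and avoid any tightness argument on $\Omega$.
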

The first part of this lemma is essentially contained in the article~\cite{SW} of Simmons--Weiss, whose proof relies on ideas going back to Kolmogorov and Doob (cf.\ \cite[\S A.3]{BQ_book}). 
Our proof of the second part generalizes the argument to the Markovian case. 

The method of proof is to show the desired almost sure convergence for a fixed test function and then use separability of an appropriate space of test functions to exchange the order of quantifiers. 
When the underlying space is locally compact, this test function space can be taken to be the space of compactly supported continuous functions. 
This is however not the case in our setup, so that we need to find a substitute. 
To this end, let us introduce the following concept: 
Given a locally compact second countable metrizable space $S$, we shall say that a continuous function $f$ on $S\times\Omega$ \emph{compactly depends on finitely many coordinates} if there exists $N\in\N$ and a compactly supported continuous function $\tilde{f}$ on $S\times E^{N+1}$ such that $f(s,\omega)=\tilde{f}(s,\omega_0,\dots,\omega_N)$ for all $(s,\omega)\in S\times \Omega$. 
The collection of all such functions is separable; we denote it by $C_{cf}(S\times \Omega)$. 
\begin{proof}[Proof of Lemma~\textup{\ref{lem:equidist_reduction}}]
It is easily deduced from \cite[Propositions~3.4.4,~3.4.6]{EK_book} that it suffices to check convergence for test functions $f\in C_{cf}(S\times \Omega)$, where $S=X$ in~(i) and $S=E\times X$ in (ii). 
In view of separability of this function space, the proof of \cite[Proposition~5.1]{SW} still yields part (i). 

Similarly, to obtain (ii) it is enough to establish the desired $\Pro_\lambda$-a.s.\ convergence 
\begin{align*}
\frac{1}{n}\sum_{k=0}^{n-1}\varphi(\omega_k,g_{\omega|_k}x_0,T^k\omega)\overset{n\to\infty}{\longrightarrow}\sum_{e\in E}\pi(\set{e})\int_{X\times \Omega}\varphi(e,x,\omega)\dd(m_e\otimes \Pro_e)
\end{align*}
for a single bounded continuous test function $\varphi\colon E\times X\times \Omega\to\R$ depending on finitely many coordinates, say on the first $N+1$ coordinates $\omega_0,\dots,\omega_N$ in $\Omega$. 

Introduce the functions 
\begin{align*}
\varphi_X(e,x)&=\int_\Omega\varphi(e,x,\omega)\dd\Pro_e(\omega),\text{ and}\\
h(e,x,\omega)&=\varphi(e,x,\omega)-\varphi_X(e,x).
\end{align*}
Applying $\Pro_\lambda$-a.s.\ equidistribution of $(\omega_n,g_{\omega|_n}x_0)_n$ to the function $\varphi_X$ and setting $z_k=z_k(\omega)=(\omega_k,g_{\omega|_k}x_0,T^k\omega)$, we see that it remains to show that $\Pro_\lambda$-a.s.\ we have 
\begin{align}\label{to_show_vanishes}
\frac1n\sum_{k=0}^{n-1}h(z_k)\overset{n\to\infty}{\longrightarrow}0.
\end{align} 
Denote by $\mathcal{B}_n$ the $\sigma$-algebra of Borel subsets of $E\times X\times \Omega$ depending only on the first $n+1$ coordinates $\omega_0,\dots,\omega_n$ in $\Omega$. 
Then by definition of $z_k$ and assumption on $\varphi$ we have for $k\le n-N$ 
\begin{align}\label{mart_ingr1}
\E_\lambda\sqbr{h(z_k)|\mathcal{B}_n}=h(z_k).
\end{align}
Now suppose $k\ge n$. 
Then, using the Markov property and the definition of $\varphi_X$, 
\begin{align}\label{eq.1}
\int_\Omega &\varphi_X(\omega_{k-n}',g_{\omega'|_{k-n}}g_{\omega|_n}x_0) \dd \Pro_{\omega_n}(\omega') \nonumber\\
&=\int_\Omega \int_\Omega \varphi(\omega_{k-n}',g_{\omega'|_{k-n}}g_{\omega|_n}x_0,\omega'')\dd \Pro_{\omega'_{k-n}}(\omega'')\dd \Pro_{\omega_n}(\omega')\nonumber\\
&=\int_\Omega \varphi(\omega_{k-n}',g_{\omega'|_{k-n}}g_{\omega|_n}x_0,T^{k-n}\omega') \dd \Pro_{\omega_n}(\omega').  
\end{align}

Using the Markov property again, one can express the conditional expectation $\E_\lambda\sqbr{h(z_k)|\mathcal{B}_n}$ as 
\begin{align}\label{eq.2}
\E_\lambda\sqbr{h(z_k)|\mathcal{B}_n}&=\E_\lambda\sqbr[\Big]{h(\omega_k,g_{\omega|_k}x_0,T^k\omega)\Big|\mathcal{B}_n}\nonumber\\
&=\int_\Omega h(\omega_{k-n}',g_{\omega'|_{k-n}}g_{\omega|_n}x_0,T^{k-n}\omega')\dd\Pro_{\omega_n}(\omega').
\end{align}

Combining \eqref{eq.1} and \eqref{eq.2}, we deduce that for $k \ge n$ we have 
\begin{align}\label{mart_ingr2}
    \E_\lambda\sqbr{h(z_k)|\mathcal{B}_n}=0.
\end{align}
It follows from \eqref{mart_ingr1} and \eqref{mart_ingr2} that the random variables 
\begin{align*}
M_n=\sum_{k=0}^\infty \E_\lambda\sqbr{h(z_k)|\mathcal{B}_n}
\end{align*}
form a martingale under $\Pro_\lambda$ differing by a bounded amount (at most $2N\norm{h}_\infty$) from $\sum_{k=0}^{n-1}h(z_k)$. 
In particular, $(M_n)_n$ has bounded increments, so that \cite[Corollary~A.8]{BQ_book} yields that $\Pro_\lambda$-a.s.\ $\frac1n M_n\to 0$ as $n\to\infty$, proving \eqref{to_show_vanishes} and hence the lemma. 
\end{proof}
\begin{proof}[Proof of Proposition~\textup{\ref{prop:markov_equidist_bootstrap}}]
For the sake of readability, we shall first ignore the second component $T^n\omega$ and only prove equidistribution of $(g_{\omega|_n}x_0)_n$. 
Afterwards, we explain the modifications needed to obtain the full statement. 

Let $f$ be a bounded continuous function on $X$. 
For $\ell\in\N$ we consider the function 
\begin{align*}
F_\ell\colon X\times (E_e^\ren)^\N\to\R,\,(x,(w_m)_m)\mapsto\begin{cases}f(g_{w_0|_\ell}x),&\ell(w_0)>\ell,\\\hfill 0,&\ell(w_0)\le \ell,\end{cases}
\end{align*}
where $\ell(w_0)$ denotes the length of the word $w_0$. 
Applying Lemma~\ref{lem:equidist_reduction}(i) to $F_\ell$ with $\Pro=\tilde{\mu}_e^{\otimes \N}$ and using the invariance assumption on $m$, we get 
\begin{align}\label{convergence_from_lemma}
\frac{1}{n}\sum_{k=0}^{n-1}f(g_{w_k|_\ell}g_{w_{k-1}}\dotsm g_{w_0}x_0)\mathbf{1}_{\ell(w_k)>\ell}\longrightarrow &\int F_\ell\dd\bigl(m\otimes\tilde{\mu}_e^{\otimes\N}\bigr)\\=&\,\Pro_e[\tau_e>\ell]\int f\dd m\nonumber
\end{align}
as $n\to\infty$ for $\tilde{\mu}_e^{\otimes \N}$-a.e.\ $(w_m)_m\in(E_e^\ren)^\N$. 

Now let $\omega\in\Omega$ correspond to $(w_m)_m\in (E_e^\ren)^\N$ via \eqref{isom} and denote by $T(n)$ the number of occurrences of $e$ in $\omega$ before time $n$. 
In other words, $T(n)$ is the number of the $w_m$ contributing to $\omega|_n$, so that the latter is some intermediate word between $w_{T(n)-2}\ldots w_0$ and $w_{T(n)-1}\ldots w_0$. 
Using this observation, for every $L\in\N$ we can write 
\begin{align}
\frac{1}{n}\sum_{k=0}^{n-1}f(g_{\omega|_k}x_0)=&\,\frac{T(n)}{n}\sum_{\ell=0}^{L-1}\frac{1}{T(n)}\sum_{k=0}^{T(n)-1}f(g_{w_k|_\ell}g_{w_{k-1}}\dotsm g_{w_0}x_0)\mathbf{1}_{\ell(w_k)>\ell}\label{convergence_to_correct}\\
&+\frac{1}{n}\sum_{k=0}^{T(n)-1}\sum_{\ell=L}^{\ell(w_k)-1}f(g_{w_k|_\ell}g_{w_{k-1}}\dotsm g_{w_0}x_0)\label{negligible1}\\
&-\frac{1}{n}\sum_{k=n}^{\tau_e^{T(n)}(\omega)-1}f(g_{\omega|_k}x_0).\label{negligible2}
\end{align}
Using Lemma~\ref{lem:isom} and the Birkhoff ergodic theorem, we have $\Pro_e$-a.s.\  $\tau_e^n/n\to\E_e[\tau_e]$. 
This in turn implies that $\Pro_e$-a.s.\ also $T(n)/n\to 1/\E_e[\tau_e]$. 
Together with \eqref{convergence_from_lemma} it follows that the right-hand side of \eqref{convergence_to_correct} converges $\Pro_e$-a.s.\ to 
\begin{align*}
\frac{1}{\E_e[\tau_e]}\sum_{\ell=0}^{L-1}\Pro_e[\tau_e> \ell]\int f\dd m.
\end{align*}
Using the ergodic theorem again, we also know that \eqref{negligible1} is bounded by 
\begin{align*}
\frac{\norm{f}_\infty}{n}\sum_{k=0}^{T(n)-1}(\ell(w_k)-L)^+\overset{n\to\infty}{\longrightarrow} \frac{\norm{f}_\infty}{\E_e[\tau_e]}\E_e[(\tau_e-L)^+],
\end{align*}
and \eqref{negligible2} by 
\begin{align*}
\frac{\norm{f}_\infty\ell(w_{T(n)-1})}{n}\overset{n\to\infty}{\longrightarrow}0,
\end{align*}
where in both cases convergence holds $\Pro_e$-a.s. 

Since $\sum_{\ell=0}^\infty\Pro_e[\tau_e> \ell]=\E_e[\tau_e]$ and, by positive recurrence, $\E_e[(\tau_e-L)^+]\to 0$ as $L\to\infty$, the above combine to imply the desired $\Pro_e$-a.s.\ convergence 
\begin{align*}
\frac{1}{n}\sum_{k=0}^{n-1}f(g_{\omega|_k}x_0)\longrightarrow \int f\dd m
\end{align*}
as $n\to\infty$. 

We now upgrade the argument above to also obtain joint equidistribution. 
With the same initial reduction as in the proof of Lemma~\ref{lem:equidist_reduction}, it suffices to prove $\Pro_e$-a.s.\ convergence 
\begin{align*}
\frac{1}{n}\sum_{k=0}^{n-1}f(g_{\omega|_k}x_0,T^k\omega)\overset{n\to\infty}{\longrightarrow}\int f\dd(m\otimes \Pro_\pi)
\end{align*}
for one fixed bounded continuous function $f$ on $X\times \Omega$ depending on only finitely many coordinates. 
The argument is similar as above; only the functions $F_\ell$ need to be chosen in a slightly more intricate way: 
We set
\begin{align*}
F_\ell\colon X\times (E_e^\ren)^\N\to\R,\,(x,(w_m)_m)\mapsto\begin{cases}f(g_{w_0|_\ell}x_0,T_\ell(w_m)_m),&\ell(w_0)> \ell,\\\hfill 0,&\ell(w_0)\le \ell,\end{cases}
\end{align*}
where $T_\ell(w_m)_m$ is obtained by first identifying $(w_m)_m$ with $\omega\in\Omega$ via \eqref{isom} and then applying the $\ell$-fold shift $T^\ell$. 
These functions $F_\ell$ again satisfy the assumptions of part (i) of Lemma~\ref{lem:equidist_reduction}. 
We find 
\begin{align*}
\frac{1}{n}\sum_{k=0}^{n-1}f(g_{w_k|_\ell}g_{w_{k-1}}\dotsm g_{w_0}x_0,T_\ell(w_{m+k})_m)\mathbf{1}_{\ell(w_k)> \ell}&\longrightarrow\int F_\ell\dd\bigl(m\otimes\tilde{\mu}_e^{\otimes\N}\bigr)
\end{align*}
as $n\to\infty$ for $\tilde{\mu}_e^{\otimes \N}$-a.e.\ $(w_m)_m\in(E_e^\ren)^\N$, the limit equaling, again by the assumed invariance of $m$ and Lemma~\ref{lem:isom}, 
\begin{align*}
\int F_\ell\dd\bigl(m\otimes\tilde{\mu}_e^{\otimes\N}\bigr)&=\int_{\set{\ell(w_0)> \ell}}\int_Xf(x,T_\ell(w_m)_m)\dd m(x)\dd \tilde{\mu}_e^{\otimes \N}((w_m)_m)\\
&=\int_X\int_{\set{\tau_e> \ell}}f(x,T^\ell\omega)\dd \Pro_e(\omega)\dd m(x).
\end{align*}
Noting that by the Markov property and the description \eqref{stat_dist} of $\pi$ we have 
\begin{align*}
\sum_{\ell=0}^\infty\int_{\set{\tau_e> \ell}}f(x,T^\ell\omega)\dd \Pro_e(\omega)=\E_e\sqbr*{\sum_{k=0}^{\tau_e-1}\E_{\omega_k}[f(x,\cdot)]}=\E_e[\tau_e]\E_\pi[f(x,\cdot)]
\end{align*}
for every $x\in X$, the remainder of the argument is the same as above. 
Indeed, together with dominated convergence this implies that the limit 
\begin{align*}
\frac{1}{\E_e[\tau_e]}\int_X\sum_{\ell=0}^{L-1}\int_{\set{\tau_e> \ell}}f(x,T^\ell\omega)\dd \Pro_e(\omega)\dd m(x)
\end{align*}
of \eqref{convergence_to_correct} now converges to $\int f\dd(m\otimes \Pro_\pi)$ as $L\to\infty$, and \eqref{negligible1} and \eqref{negligible2} still tend to $0$. 
\end{proof}
\subsection{Moment and Expansion Conditions}\label{subsec:markov_mom_exp}
We now express the notions of finite moments and expansion in the Markovian setting, in a way that will be convenient when combining the results of~\S\ref{sec:iid_walks} and~\S\ref{subsec:markov_equidist_bootstrap}. 

Let $\rho$ be a representation of $G$ on a finite-dimensional real vector space $V$. 
Recall that $N(g)=\max(\norm{\rho(g)},\norm{\rho(g)^{-1}})$, where $\norm{\cdot}$ is the operator norm associated to a fixed norm on $V$. 
\begin{definition}
A Markov chain on $E$ is said to have \emph{finite first moments} in $(V,\rho)$ if for every $e\in E$ 
\begin{align*}
\E_e[\log N(g_{\omega|_{\tau_e}})]<\infty,
\end{align*}
and to have \emph{finite exponential moments} in $(V,\rho)$ if for every $e\in E$ there exists $\delta>0$ such that 
\begin{align*}
\E_e[N(g_{\omega|_{\tau_e}})^\delta]<\infty.
\end{align*}
\end{definition}
As usual, we suppress the representation from the notation when $(V,\rho)=(\g,\Ad)$. 
Note that the definition does not depend on the choice of norm on $V$.

In terms of renewal measures these conditions read as follows. 
\begin{lemma}
A recurrent irreducible Markov chain on $E$ has finite first \textup{(}resp.\ exponential\textup{)} moments in $V$ if and only if all renewal measures $\mu_e$ have the corresponding property. \qed
\end{lemma}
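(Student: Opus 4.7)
The plan is to unpack the definition of $\tilde{\mu}_e$ and observe that this lemma is essentially a tautological change of variables. Specifically, I would argue that by recurrence, $\tau_e$ is $\Pro_e$-a.s.\ finite, so the random word $\omega\mapsto \omega|_{\tau_e(\omega)}$ is defined almost everywhere. The definition of $\tilde{\mu}_e$ in Definition~\ref{def:renewal_meas} was explicitly arranged so that $\tilde{\mu}_e(\{w\})=\Pro_e[\omega|_{\tau_e}=w]$ for every $e$-renewal word $w$, hence $\tilde{\mu}_e$ is the law of $\omega|_{\tau_e}$ under $\Pro_e$. Pushing forward one more time by the coding map $w\mapsto g_w$ shows that $\mu_e$ is the law of $g_{\omega|_{\tau_e}}$ under $\Pro_e$.

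With this identification in hand, the change-of-variables formula gives, for every Borel function $\varphi\colon G\to[0,\infty]$,
\begin{equation*}
\E_e\bigl[\varphi(g_{\omega|_{\tau_e}})\bigr]=\int_G\varphi(g)\dd\mu_e(g).
\end{equation*}
Applying this to $\varphi=\log N$ yields the equivalence for finite first moments, and to $\varphi=N^\delta$ (for the relevant $\delta>0$) yields the equivalence for finite exponential moments. Because both sides of the lemma are stated as universal statements over $e\in E$, the per-state equivalences above combine to give the full lemma.

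There is no genuine obstacle: the definitions of $\tilde{\mu}_e$ and of the Markovian moment conditions were tailored precisely so that this transfer is immediate. The only minor care needed is to check that $\omega|_{\tau_e}$ really is, $\Pro_e$-a.s., an $e$-renewal word of length $\tau_e$, which follows directly from $\omega_0=e$, the minimality of $\tau_e$, and the fact that admissibility of the transitions is automatic along a realization of the chain.
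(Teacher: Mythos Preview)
Your proposal is correct and matches the paper's approach: the paper records this lemma with an immediate \qed, since the definitions of the Markovian moment conditions and of $\mu_e$ were arranged precisely so that the two sides coincide by change of variables. Your unpacking of this tautology is exactly the intended justification.
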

Let us mention a few simple examples in which the above moment conditions are satisfied. 
\begin{example}\label{ex:moments}
\leavevmode
\begin{enumerate}
\item If the state space $E$ is finite, then any irreducible Markov chain on $E$ has finite exponential moments in $(V,\rho)$. 
\item More generally, if the Markov chain on $E$ is irreducible and positive (resp.\ exponentially) recurrent and the coding map $E\to G$ takes values in a bounded subset of $G$, then the Markov chain has finite first (resp.\ exponential) moments in $(V,\rho)$. 
This conclusion stays valid when the coding map has sufficiently slow growth. 
\item Suppose the Markov chain on $E$ is positive recurrent and let $\pi$ be its stationary distribution. 
Denote by $c\colon E\to G$ the coding map. 
If $c_*\pi$ has a finite first moment in $(V,\rho)$, i.e.\ if 
\begin{align*}
\sum_{e'\in E}\log N(g_{e'})\pi(\set{e'})<\infty,
\end{align*}
then the Markov chain has finite first moments in $(V,\rho)$. 
\end{enumerate}
We omit the straightforward verifications. 
\end{example}

Next, we generalize the notion of uniform expansion from~\S\ref{sec:iid_walks}. 
\begin{definition}
Let $(Y_n)_n$ be a stochastic process with values in $\GL_d(\R)$ and $P\subset \Pro(\R^d)$ a closed subset invariant under the support of the distribution of $Y_n$ for all $n\in\N$. 
Then we call $(Y_n)_n$ \emph{uniformly expanding on $P$} if for all $\R v\in P$, almost surely, 
\begin{align*}
\liminf_{n\to\infty}\tfrac{1}{n}\log\norm{Y_n\dotsm Y_1v}>0.
\end{align*}
We call $(Y_n)_n$ \emph{uniformly expanding on Grassmannians} if $(Y_n^{\wedge k})_n$ is uniformly expanding on $\Pro(\pt^k\R^d)$ for all $1\le k\le d-1$. 
\end{definition}

To efficiently deal with our setting involving an abstract Markov chain on $E$, different starting distributions, and a coding map, it will be convenient to introduce the following more concise terminology. 
\begin{definition}
Let $\lambda$ be a starting distribution on $E$. 
Then we say that a Markov chain on $E$ is \emph{$\lambda$-expanding \textup{(}under the coding map $e\mapsto g_e$\textup{)}} if the stochastic process 
\begin{align*}
(Y_n)_n\colon(\Omega,\Pro_\lambda)\ni \omega\mapsto\bigl(\Ad(g_{\omega_{n-1}})\bigr)_n
\end{align*}
on $\GL(\g)$ is uniformly expanding on Grassmannians. 
When $\lambda=\delta_e$ for some $e\in E$ we also say that it is \emph{$e$-expanding}. 
\end{definition}
For brevity, we will usually omit the coding map from the notation when using these notions of expansion. 

Under a moment assumption as in Example~\ref{ex:moments}(iii), $e$-expansion can be phrased in terms of the renewal measure $\mu_e$. 
\begin{lemma}\label{lem:markov_expansion_equivalence}
Suppose that the Markov chain on $E$ is irreducible and recurrent and let $e\in E$. 
Denote by $\pi$ its stationary distribution and by $c\colon E\to G$ the coding map. 
\begin{enumerate}
\item If the Markov chain is $e$-expanding, then $\Ad_*\mu_e$ is uniformly expanding on Grassmannians. 
\item Suppose the Markov chain is additionally positive recurrent and $c_*\pi$ has a finite first moment in $\g$. 
Then the Markov chain is $e$-expanding if and only if $\Ad_*\mu_e$ is uniformly expanding on Grassmannians. 
\end{enumerate}
\end{lemma}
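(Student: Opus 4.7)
The strategy is to exploit Lemma~\ref{lem:isom}, which identifies $e$-excursions with an i.i.d.\ sample of renewal words, and then to bridge between the excursion time scale and the true time scale via the strong law of large numbers. Throughout, fix $1\leqs k\leqs \dim(G)-1$ and $v\in\pt^k\g$, and write $Z_n = \Ad^{\wedge k}(g_{w_{n-1}}\dotsm g_{w_0})v$ with $w_j$ the excursion words.

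For part (i), no moment hypothesis is needed. By Lemma~\ref{lem:isom}, under $\Pro_e$ the sequence $(Z_n)_n$ coincides with the subsequence $(\Ad^{\wedge k}(g_{\omega|_{\tau_e^n}})v)_n$ of the $e$-expanding process, sampled at the hitting times of $e$. Since $\tau_e^n\geqs n$, if on an event of full measure one has $\log\norm{\Ad^{\wedge k}(g_{\omega|_m})v}>cm/2$ for all large $m$, the same event yields $\log\norm{Z_n}>c\tau_e^n/2\geqs cn/2$ for all large $n$. Thus $\liminf_n\tfrac1n\log\norm{Z_n}>0$ a.s., i.e.\ $\Ad_*\mu_e$ is uniformly expanding on Grassmannians.

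For part (ii), assume conversely that $\Ad_*\mu_e$ is uniformly expanding on Grassmannians; we must show the chain is $e$-expanding. For $m\in\N$ let $N=N(m,\omega)$ be the unique integer with $\tau_e^N\leqs m<\tau_e^{N+1}$ and decompose $\Ad^{\wedge k}(g_{\omega|_m})v = A_m Z_N$, where $A_m = \Ad^{\wedge k}(g_{\omega_{m-1}})\dotsm \Ad^{\wedge k}(g_{\omega_{\tau_e^N}})$ accounts for the current, incomplete excursion. Then
\begin{align*}
\tfrac1m\log\norm{\Ad^{\wedge k}(g_{\omega|_m})v}\geqs \tfrac1m\log\norm{Z_N} - \tfrac1m\log\norm{A_m^{-1}}.
\end{align*}
By Lemma~\ref{lem:isom} the interarrival times $\tau_e^{n+1}-\tau_e^n$ are i.i.d.\ copies of $\tau_e$ under $\Pro_e$; positive recurrence and the classical strong law thus give $\tau_e^n/n\to\E_e[\tau_e]$, hence $N(m)/m\to 1/\E_e[\tau_e]>0$ a.s. Combined with the hypothesis $\liminf_n\tfrac1n\log\norm{Z_n}>0$ a.s., this yields $\liminf_m\tfrac1m\log\norm{Z_{N(m)}}>0$ a.s.

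It remains to control $\tfrac1m\log\norm{A_m^{-1}}$. Bound $\log\norm{A_m^{-1}}\leqs k S_N$ with $S_n=\sum_{j=\tau_e^n}^{\tau_e^{n+1}-1}\log N(g_{\omega_j})$. By Lemma~\ref{lem:isom} the $(S_n)_n$ are i.i.d.\ under $\Pro_e$, and using formula~\eqref{stat_dist} their common mean equals $\E_e[\tau_e]\sum_{e'\in E}\pi(\set{e'})\log N(g_{e'})$, which is finite by the finite first moment of $c_*\pi$. The standard corollary of the strong law that individual terms of such an i.i.d.\ sequence satisfy $S_N/N\to 0$ a.s., combined with $N\leqs m$, then gives $S_N/m\to 0$ a.s. This completes the argument. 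The subtle point is the bridging between time scales in part~(ii); the finite-first-moment hypothesis enters precisely in order to render the partial-excursion factor $A_m$ negligible.
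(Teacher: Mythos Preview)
Your proof is correct. Part~(i) is essentially the paper's argument (you use the crude bound $\tau_e^n\geqs n$ where the paper writes $\tau_e^n/n\to\E_e[\tau_e]\in[1,\infty]$, but the effect is the same).

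For part~(ii), however, you take a genuinely different route. The paper's argument is shorter and more structural: the finite first moment of $c_*\pi$ is exactly the integrability needed to run Oseledets' theorem for the ergodic shift on $(\Omega,\Pro_\pi)$, which forces both $\frac{1}{n}\log\norm{\Ad^{\wedge k}(g_{\omega|_n})v}$ and its subsequence along hitting times to be honest limits $\Pro_e$-a.s.; once the liminfs are limits and $\tau_e^n/n\to\E_e[\tau_e]\in(0,\infty)$, the chain of inequalities in~(i) simply reverses. Your approach avoids invoking Oseledets for the Markov system and instead controls the partial-excursion factor $A_m$ directly via the law of large numbers for the i.i.d.\ excursion functionals $S_n$, with the moment hypothesis entering through \eqref{stat_dist} to give $\E_e[S_0]<\infty$. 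This is more hands-on and elementary; the paper's argument, on the other hand, yields the extra information that the exponential growth rates are actual limits rather than merely positive liminfs.
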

\begin{proof}
Let $1\le k\le \dim(G)-1$. 
We $\Pro_e$-a.s.\ have $\tau_e^n/n\to\E_e[\tau_e]\in [1,\infty]$ as $n\to\infty$. 
By definition, $e$-expansion means that, $\Pro_e$-a.s., 
\begin{align}\label{liminf_pos1}
\liminf_{n\to\infty}\tfrac{1}{n}\log\norm{\Ad^{\wedge k}(g_{\omega|_n})v}>0.
\end{align}
From this it follows that $\Pro_e$-a.s.\ also 
\begin{align}\label{liminf_pos2}
\liminf_{n\to\infty}\tfrac{1}{n}\log\norm{\Ad^{\wedge k}(g_{\omega|_{\tau_e^n}})v}=\liminf_{n\to\infty}\tfrac{\tau_e^n}{n}\tfrac{1}{\tau_e^n}\log\norm{\Ad^{\wedge k}(g_{\omega|_{\tau_e^n}})v}>0.
\end{align}
This gives part (i). 
In the setting of (ii), we have $\E_e[\tau_e]\in[1,\infty)$, and the moment assumption allows applying Oseledets' theorem with the shift map on $(\Omega,\Pro_\pi)$ (we remark that Oseledets' theorem holds not only for i.i.d.\ processes, but more generally for stationary ones; see e.g.\ \cite[Theorem~1.6]{Ru}). 
We find that all the limit inferiors above are actually limits $\Pro_\pi$-, thus in particular $\Pro_e$-a.s., so that in this case \eqref{liminf_pos2} also implies \eqref{liminf_pos1}. 
\end{proof}
\subsection{Expanding Markov Chains}\label{subsec:exp_markov}
We now combine the bootstrapping results from~\S\ref{subsec:markov_stat_bootstrap} and~\S\ref{subsec:markov_equidist_bootstrap} with those of~\S\ref{sec:iid_walks} to prove our main Markovian measure classification and equidistribution results. 
These will imply Theorem~\ref{thm:intro2}. 
Recall that for $e\in E$, $G_e$ denotes the closed subgroup of $G$ generated by the support of the renewal measure $\mu_e$. 
\begin{theorem}\label{thm:exp_markov_stat}
Let $G$ be a real Lie group, $\Gamma$ a discrete subgroup of $G$, and $X$ the homogeneous space $G/\Gamma$. 
Suppose that the Markov chain on $E$ is irreducible and positive recurrent; denote by $\pi$ its stationary distribution. 
Suppose furthermore that the Markov chain is $\pi$-expanding and has finite first moments in $\g$. 
Let $\nu$ be an ergodic stationary probability measure for the action chain on $E\times X$ as in \eqref{stat_decomp}. 
Then either
\begin{enumerate}
\item for every $e\in E$ the measure $\nu_e$ is $G_e$-invariant and supported on a finite $G_e$-orbit, or 
\item $\Gamma$ is a lattice and all $\nu_e$ are the Haar measure $m_X$ on $X$. 
\end{enumerate}
Moreover, for every \adm{e}{e'}-admissible word $c\in E^*$, we have $(g_c)_*\nu_e=\nu_{e'}$. 
\end{theorem}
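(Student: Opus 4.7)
The plan is to apply the i.i.d.\ classification Theorem~\ref{thm:exp_stat} separately to each renewal measure $\mu_e$ and then glue the resulting dichotomies for different $e$ together using the coupling equation \eqref{action_stat} supplied by Lemma~\ref{lem:markov_stat_bootstrap}.

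First I would invoke Lemma~\ref{lem:markov_stat_bootstrap} to see that, thanks to ergodicity of $\nu$, each $\nu_e$ is a $\mu_e$-ergodic $\mu_e$-stationary probability measure on $X$. To apply Theorem~\ref{thm:exp_stat} to $\mu_e$ I must verify a finite first moment in $\g$ and uniform expansion of $\Ad_*\mu_e$ on Grassmannians. The moment condition follows immediately from the Markov chain having finite first moments in $\g$. For the expansion condition, I would observe that $\pi$-expansion implies $e$-expansion for every $e\in E$: by irreducibility and positive recurrence, $\pi(\{e\})>0$, so any $\Pro_\pi$-null set is also $\Pro_e$-null, and Lemma~\ref{lem:markov_expansion_equivalence}(i) then yields uniform expansion of $\Ad_*\mu_e$ on Grassmannians. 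Theorem~\ref{thm:exp_stat} thus applies and gives, for each fixed $e$, either (i)$_e$ that $\nu_e$ is $G_e$-invariant and supported on a finite $G_e$-orbit, or (ii)$_e$ that $\Gamma$ is a lattice and $\nu_e=m_X$.

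Next I would upgrade this per-state dichotomy to the global dichotomy of the theorem. Suppose (ii)$_{e_0}$ holds for some $e_0$. Equation \eqref{action_stat} then reads
\begin{align*}
\pi(\{e_0\})m_X=\sum_{e'}\pi(\{e'\})p_{e_0,e'}(g_{e'})_*\nu_{e'}.
\end{align*}
Since $m_X$ is non-atomic while any finite-orbit measure is purely atomic, every $\nu_{e'}$ with $p_{e_0,e'}>0$ must itself be non-atomic and therefore satisfy (ii)$_{e'}$. Irreducibility of the Markov chain propagates this to every $e\in E$, placing us in case (ii) of the theorem. Otherwise case (i) holds for every $e$.

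Finally, for the moreover assertion, let $c\in E^*$ be \adm{e}{e'}-admissible. In case (ii) the identity $(g_c)_*m_X=m_X$ is just $G$-invariance of Haar. In case (i), since $\nu_{e'}$ is $G_{e'}$-invariant, Lemma~\ref{lem:markov_stat_bootstrap} gives that $(g_c)_*\nu_e$ and $\nu_{e'}$ belong to the same measure class; as both are uniform counting measures on finite sets, their supports must agree and have the same cardinality, which forces equality. The main technical hurdle I anticipate is the transitivity hypothesis in Theorem~\ref{thm:exp_stat}, which is not automatic from the Markov assumptions here; I would address it by restricting the application of the theorem to the $G_e$-invariant union of connected components of $X$ supporting $\nu_e$ (ergodicity of $\nu_e$ guarantees we may do so) and then relying on the propagation step to recover $\nu_e=m_X$ on all of $X$.
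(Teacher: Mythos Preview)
Your proof is correct and follows essentially the same route as the paper's: invoke Lemma~\ref{lem:markov_stat_bootstrap} for the per-state decomposition and ergodicity, apply Theorem~\ref{thm:exp_stat} to each $\mu_e$, propagate the dichotomy across states, and handle the ``moreover'' clause via the measure-class statement in Lemma~\ref{lem:markov_stat_bootstrap}. The only minor difference is in the propagation step---the paper uses the absolute continuity clause at the end of Lemma~\ref{lem:markov_stat_bootstrap} (so that $(g_c)_*\nu_{e'}\ll\nu_{e_0}=m_X$ forces $\nu_{e'}$ non-atomic), whereas you argue directly from \eqref{action_stat}; both work. You are also right to flag the transitivity hypothesis of Theorem~\ref{thm:exp_stat}, which the paper's proof invokes without comment.
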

\begin{proof}
Note that by irreducibility of the Markov chain on $E$, $\pi$-expansion implies $e$-expansion for every $e\in E$. 
Thus, it follows by Lemma~\ref{lem:markov_stat_bootstrap} and Theorem~\ref{thm:exp_stat} that each $\nu_e$ is either supported on a finite $G_e$-orbit or is the Haar measure $m_X$ on $X$. 
Irreducibility of the Markov chain together with the last statement of Lemma~\ref{lem:markov_stat_bootstrap} imply that the same option applies to all $e\in E$. 

The last claim is clear in case (ii). 
In case (i), Lemma~\ref{lem:markov_stat_bootstrap} implies that $(g_c)_*\nu_e$ and $\nu_{e'}$ are of the same measure class. 
Being uniform measures on finite orbits, this forces $(g_c)_*\nu_e=\nu_{e'}$, as claimed. 
\end{proof}
\begin{theorem}\label{thm:exp_markov_equidist}
Let $G$ be a real Lie group with simple identity component such that the Zariski closure of $\Ad(G)$ is Zariski connected, $\Gamma$ a lattice in $G$, and $X=G/\Gamma$. 
Suppose that the Markov chain on $E$ is irreducible and positive recurrent and has finite exponential moments in $\g$. Denote by $\pi$ its stationary distribution and let $e\in E$. 
Assume that $G_e$ is not virtually contained in any conjugate of $\Gamma$, that $G_e$ acts transitively on the connected components of $X$, and that the Markov chain is $e$-expanding. 
Then, for every $x_0\in X$, $(g_{\omega|_n}x_0,T^n\omega)_n$ equidistributes towards $m_X\otimes \Pro_\pi$ for $\Pro_e$-a.e.\ $\omega\in\Omega$. 
\end{theorem}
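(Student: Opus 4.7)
The plan is to reduce the statement to the i.i.d.\ setting of Theorem~\ref{thm:exp_equidist} by considering the renewal measure $\mu_e$, and then promote the resulting i.i.d.\ equidistribution to the full Markov trajectory via the joint equidistribution bootstrap of Proposition~\ref{prop:markov_equidist_bootstrap}.

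First I would verify that $\mu_e$ satisfies all the hypotheses of Theorem~\ref{thm:exp_equidist}. By definition, the closed subgroup generated by $\supp(\mu_e)$ is $G_e$, which by assumption is not virtually contained in any conjugate of $\Gamma$ and acts transitively on the connected components of $X$. The assumption that the Markov chain has finite exponential moments in $\g$ translates directly, by the lemma stated after the definition in~\S\ref{subsec:markov_mom_exp}, into finite exponential moments of $\mu_e$ in $\g$. Finally, $e$-expansion of the chain upgrades to uniform expansion of $\Ad_*\mu_e$ on Grassmannians by Lemma~\ref{lem:markov_expansion_equivalence}(i). Applying Theorem~\ref{thm:exp_equidist} to the i.i.d.\ random walk with law $\mu_e$ thus yields, for every $x_0 \in X$, that $(g_{b|_n}x_0)_n$ equidistributes towards $m_X$ for $\mu_e^{\otimes\N}$-a.e.\ $b \in B$.

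Next I would invoke Proposition~\ref{prop:markov_equidist_bootstrap} with $m = m_X$. The invariance hypothesis on $m$ required there---namely that $m$ be preserved by $g_w$ for every admissible word $w \in E^*$ starting with $e$---is trivially satisfied, since $m_X$ is $G$-invariant. Combined with the i.i.d.\ equidistribution established in the previous step, the proposition immediately delivers the desired $\Pro_e$-a.s.\ equidistribution of $(g_{\omega|_n}x_0, T^n\omega)_n$ towards $m_X \otimes \Pro_\pi$ for every $x_0 \in X$.

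In this form the argument is essentially a clean assembly of the components already developed in the preceding subsections, and I do not expect a serious obstacle. The one point that needs to be checked carefully is that each hypothesis of Theorem~\ref{thm:exp_equidist} transfers from the Markov chain to its renewal measure: the algebraic condition on $G_e$ is built into the statement, the moment condition is handled by the renewal description of $\mu_e$, and the dynamical uniform expansion for $\Ad_*\mu_e$ is exactly the content of Lemma~\ref{lem:markov_expansion_equivalence}(i). Note that compact support of $\mu_e$ cannot be expected even when the coding map takes values in a bounded subset of $G$, which is why having Theorem~\ref{thm:exp_equidist} available under a mere finite exponential moment assumption is crucial here.
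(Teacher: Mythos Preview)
Your proposal is correct and follows exactly the paper's approach: the paper's proof simply reads ``Combine Lemma~\ref{lem:markov_expansion_equivalence}, Theorem~\ref{thm:exp_equidist}, and Proposition~\ref{prop:markov_equidist_bootstrap}.'' You have spelled out precisely how these three ingredients fit together, correctly identifying that part~(i) of Lemma~\ref{lem:markov_expansion_equivalence} is the relevant direction and that the invariance hypothesis in Proposition~\ref{prop:markov_equidist_bootstrap} is trivially satisfied for $m_X$.
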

\begin{proof}
Combine Lemma~\ref{lem:markov_expansion_equivalence}, Theorem~\ref{thm:exp_equidist}, and Proposition~\ref{prop:markov_equidist_bootstrap}. 
\end{proof}
\begin{proof}[Proof of Theorem~\textup{\ref{thm:intro2}}]
Let $\lambda$ denote the distribution of $Y_1$. 
By hypothesis, $(Y_m)_m$ is $\lambda$-expanding, hence $e$-expanding for every $e\in E$ with $\lambda(\set{e})>0$. 
Moreover, since $E$ is finite, the Markov chain on $E$ is positive recurrent and has finite exponential moments in $\g$. 
Now, in view of Lemma~\ref{lem:inf_orbits} below, the result follows by applying Theorem~\ref{thm:exp_markov_equidist} to each such $e\in E$. 
\end{proof}
\begin{lemma}\label{lem:inf_orbits}
Suppose that $E$ is finite and the Markov chain on $E$ is irreducible. 
If $x\in X$ and $e\in E$ are such that the random orbit $\set{g_{\omega|_n}x\for n\in\N}\subset X$ is $\Pro_e$-a.s.\ infinite, then the orbit $G_e^+x$ is infinite. 
\end{lemma}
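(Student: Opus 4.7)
The plan is to prove the contrapositive: if $G_e^+ x$ is finite, then $\Pro_e$-almost surely the random orbit $\{g_{\omega|_n} x : n \in \N\}$ is contained in a deterministic finite subset of $X$, and therefore is itself finite.

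The strategy is to close up every partial excursion. For a typical $\omega$ with $\omega_0 = e$ and an arbitrary time $n$, the word $\omega|_n$ may sit in the middle of an excursion from $e$, so $g_{\omega|_n}$ need not belong to $G_e^+$. Using irreducibility of the Markov chain on $E$, first fix once and for all, for each $e' \in E \setminus \{e\}$, an admissible word $c_{e'} \in E^*$ whose associated state sequence takes the chain from $e'$ back to $e$ in finitely many steps, and set $c_e$ to be the empty word. Then, for every admissible $\omega$ with $\omega_0 = e$ and every $n \ge 1$, setting $e' = \omega_n$, the concatenation $c_{e'} \omega|_n$ is admissible and describes a path from $e$ back to $e$; splitting at each intermediate visit to $e$, it factors as a product of $e$-renewal words, so
\[
g_{c_{e'}} g_{\omega|_n} = g_{c_{e'} \omega|_n} \in G_e^+.
\]
Inverting $g_{c_{e'}}$ inside $G$ then gives $g_{\omega|_n} x \in g_{c_{e'}}^{-1}(G_e^+ x)$, which shows that the entire random orbit lies in the finite set $\{x\} \cup \bigcup_{e' \in E} g_{c_{e'}}^{-1}(G_e^+ x)$.

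I do not foresee any substantial obstacle; the argument is purely deterministic once the continuations $c_{e'}$ are fixed, and the key conceptual point is that the semigroup-vs-group gap --- namely, that elements of $G_e^+$ need not have inverses in $G_e^+$ --- is harmless because the piece needed to close any partial excursion depends only on the current state $\omega_n$, of which there are only finitely many possibilities.
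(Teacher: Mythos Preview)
Your proof is correct and follows essentially the same approach as the paper: both arguments show that the random orbit is contained in finitely many $G$-translates of $G_e^+x$, by completing each partial trajectory $\omega|_n$ to an $(e\leftarrow e)$-admissible word and then factoring into renewal words. The only cosmetic difference is that you fix one continuation word $c_{e'}$ per current state $e'=\omega_n$, whereas the paper instead bounds the length of the continuation by the diameter $k$ of the chain and takes the union over all admissible words of length at most $k-1$; both yield a finite covering, and the contrapositive framing is equivalent to the paper's direct argument.
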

\begin{proof}
Denote by $E_e^{\mathrm{adm}}$ the set of all admissible words starting with $e$ and consider the set 
\begin{align*}
\mathcal{O}=\set{g_wx\for w\in E_e^{\mathrm{adm}}}.
\end{align*}
By assumption it is infinite. 

Since the state space is finite, we can choose $k\in \N$ such that any state can be reached from everywhere in at most $k$ steps with positive probability. 
Then for every $w\in E_e^{\mathrm{adm}}$ there is an admissible word $c\in E^*$ of length at most $k-1$ such that $cw$ is \adm{e}{e}-admissible. 
It follows that $g_{cw}x\in G_e^+x$ and hence 
\begin{align*}
\mathcal{O}\subset \bigcup_{\substack{c\in E^*\text{ admissible}\\\ell(c)\le k-1}}g_c^{-1}G_e^+x,
\end{align*}
which forces $G_e^+x$ to be infinite as well. 
\end{proof}
\subsection{An Example}\label{subsec:upper_block_form}
To conclude this section, we are going to explain an example due to Simmons--Weiss~\cite{SW} that is used to relate Diophantine properties of fractals to random walks. 
We prove Proposition~\ref{prop:block_markov}, which can be considered a Markovian extension of \cite[Theorem~6.4]{SW}, and deduce Corollary~\ref{cor:intro4}. 

Let $G=\PGL_d(\R)$ and $\Gamma=\PGL_d(\Z)$. 
Given positive integers $M$ and $N$ with $M+N=d$, let $\R^{M\times N}$ be the space of $M\times N$-matrices with real entries and define 
\begin{align*}
a_t=\begin{pmatrix}\euler^{t/M}\mathbf{1}_M&\\&\euler^{-t/N}\mathbf{1}_N\end{pmatrix},\,u_{\boldsymbol{\alpha}}=\begin{pmatrix}\mathbf{1}_M&-\boldsymbol{\alpha}\\&\mathbf{1}_N\end{pmatrix},\text{ and }O_1\oplus O_2=\begin{pmatrix}O_1&\\&O_2\end{pmatrix}
\end{align*}
for $t\in\R$, $\boldsymbol{\alpha}\in\R^{M\times N}$ and $O_1\in\Orth_M(\R)$, $O_2\in\Orth_N(\R)$. 
We will denote the corresponding subgroups of $G$ by $A=\set{a_t\for t\in\R}$, $U=\set{u_{\boldsymbol{\alpha}}\for \boldsymbol{\alpha}\in\R^{M\times N}}$, $K=\set{O_1\oplus O_2\for O_1\in\Orth_M(\R),O_2\in\Orth_N(\R)}$, and set $P=AKU$. 
Note that $A$ and $K$ commute and normalize $U$; in particular, $P$ is a group. 
An element $g\in P$ can be uniquely written as a product of the form $a_tku_{\boldsymbol{\alpha}}$ and we denote the corresponding values of $t,k,\boldsymbol{\alpha}$ by $t(g),\boldsymbol{\alpha}(g)$ and $k(g)$, respectively. 
Finally, let $V^+=\Lie(U)$ be the Lie algebra of $U$. 
\begin{proposition}\label{prop:block_markov}
Suppose that $E$ is finite and let $\pi$ be the stationary distribution of an irreducible Markov chain on $E$. 
Suppose that the coding map $E\to G,\,e\mapsto g_e,$ takes values in $P$, that 
\begin{align}\label{exp_on_avg}
\sum_{e'\in E}t(g_{e'})\pi(\set{e'})>0,
\end{align}
and that for some $e_0\in E$ the Lie algebra of $H_{e_0}$ contains $V^+$. 
Then the assumptions of Theorem~\textup{\ref{thm:exp_markov_equidist}} are satisfied for every $e\in E$. 
\end{proposition}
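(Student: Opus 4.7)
The plan is to check each hypothesis of Theorem~\ref{thm:exp_markov_equidist} for an arbitrary $e\in E$. The algebraic conditions on $G=\PGL_d(\R)$ (simple identity component with Zariski connected adjoint image) and on $\Gamma=\PGL_d(\Z)$ (lattice) are standard; finiteness of $E$ supplies irreducibility, positive recurrence, and finite exponential moments in $\g$ for free. Transitivity of $G_e$ on $\pi_0(X)$ is immediate since $X$ is connected: for even $d$, $\operatorname{diag}(-1,1,\ldots,1)\in\Gamma$ lies in the non-identity component of $G$, so $G=G^0\Gamma$. The two substantive items are $e$-expansion and non-containment of $G_e$ in any conjugate of $\Gamma$.

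For $e_0$-expansion, Lemma~\ref{lem:markov_expansion_equivalence}(ii) reduces matters to uniform expansion of $\Ad_*\mu_{e_0}$ on Grassmannians. The renewal measure $\mu_{e_0}$ is supported in the group $P$, and $t\colon P\to\R$ is a homomorphism since $A$ is central in the Levi $AK$ and $U$ is normal in $P$. Kac's formula and \eqref{exp_on_avg} then yield
\begin{align*}
\int_G t(g)\,d\mu_{e_0}(g)=\E_{e_0}[\tau_{e_0}]\sum_{e'\in E}t(g_{e'})\pi(\set{e'})>0.
\end{align*}
Taking $W_k\subset\g^{\wedge k}$ to be the sum of $\Ad(a_s)^{\wedge k}$-eigenspaces of eigenvalue strictly greater than $1$ (for a fixed $s>0$), the subspaces $W_k$ are $P$-invariant: $A$ acts by scalars, $K$ commutes with $A$ and so preserves each eigenspace, and $\Ad(U)$ preserves the descending $a_s$-weight filtration. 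Since $H_{e_0}$ is contained in the Zariski closure of $P$, the $W_k$ are $H_{e_0}$-invariant. Combined with $\Lie(H_{e_0})\supset V^+$, the positive drift, and the similarity action of $P$ on $V^+$, this reproduces conditions (I')--(III') of Proposition~\ref{prop:SW_assumptions} for $\mu_{e_0}$ exactly as in \cite[Theorem~6.4]{SW}, giving uniform expansion on Grassmannians and hence $e_0$-expansion. To propagate to any $e\in E$, the strong Markov property suffices: $\tau_{e_0}<\infty$ $\Pro_e$-a.s.\ by irreducibility, and $e_0$-expansion applied to the pure wedge $v'=\Ad^{\wedge k}(g_{\omega|_{\tau_{e_0}}})v$ gives $e$-expansion upon dividing $\tfrac1n\log\|\Ad^{\wedge k}(g_{\omega|_n})v\|$ by $n$ and letting $n\to\infty$.

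For the non-containment condition, Lemma~\ref{lem:Z_closures_conjugate}(ii) yields $H_e=g_{c'}H_{e_0}g_{c'}^{-1}$ with $g_{c'}\in P$, and the normality of $U$ in $P$ upgrades $\Lie(H_{e_0})\supset V^+$ to $H_e^0\supset U$ for every $e$. I then aim to promote this to $G_e\supset U$: once so, a finite $G_e$-orbit $F\subset X$ would force the connected unipotent $U$ to act trivially on $F$, placing $U$ inside the discrete stabilizer of some $x\in F$, a contradiction. To extract $U$-elements inside the closed group $G_e$, the idea is to compare products $g_w g_{w'}^{-1}$ of renewal words whose $A$-components cancel exactly and whose $K$-components agree along subsequences---possible by compactness of $K$, positive drift, and the similarity action of $P$ on $V^+$---and then invoke Zariski density of $G_e^+$ in $H_e$ to argue that the resulting $U$-limit set sweeps out all of $U$.

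This final upgrade from $H_e^0\supset U$ (Zariski) to $G_e\supset U$ (topological) will be the main obstacle, as Zariski density of a discrete subgroup in a positive-dimensional algebraic group need not give topological density. The argument must genuinely invoke the positive drift from \eqref{exp_on_avg} and the $P$-similarity structure on $V^+$ to control the topological closure of the semigroup, and carefully engineering the renewal words so that the $A$- and $K$-parts cancel while a non-trivial $U$-component survives in the limit is where the Markovian setup departs meaningfully from the iid case treated in \cite[Theorem~6.4]{SW}.
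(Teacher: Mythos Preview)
Your expansion argument is fine (if slightly roundabout via the propagation step), but the non-containment argument contains a genuine gap that you correctly flag: upgrading the Zariski inclusion $H_e^0\supset U$ to a \emph{topological} inclusion $G_e\supset U$ is hard in general, and your sketch of cancelling $A$- and $K$-components along renewal words does not work as stated, since the $A$-components $t(g_w)$ range over a countable additive semigroup in $\R$ and there is no reason exact cancellation should ever occur.

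The paper sidesteps this entirely, and the key observation you are missing is already implicit in your own computations. Your Kac-formula calculation
\[
\int_G t(g)\,d\mu_{e_0}(g)=\E_{e_0}[\tau_{e_0}]\sum_{e'\in E}t(g_{e'})\pi(\set{e'})
\]
has nothing to do with the particular state $e_0$: the same identity holds with $e_0$ replaced by any $e\in E$, giving $\int_G t\,d\mu_e>0$ for \emph{every} $e$. Together with the inclusion $V^+\subset\Lie(H_e)$ for every $e$ (which you already obtained from Lemma~\ref{lem:Z_closures_conjugate}(ii) and normality of $U$ in $P$), this means that \emph{each} renewal measure $\mu_e$ is in $(M,N)$-upper block form in the sense of \cite[Definition~6.3]{SW}. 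Now \cite[Theorem~6.4]{SW}, applied directly to $\mu_e$, yields both conclusions at once: conditions (I')--(III') for $\mu_e$ (hence $e$-expansion via Proposition~\ref{prop:SW_assumptions} and Lemma~\ref{lem:markov_expansion_equivalence}), \emph{and} that $G_e$ is not virtually contained in any conjugate of $\Gamma$. No topological upgrade is needed, and your Markov-propagation step for expansion, while correct, becomes unnecessary.
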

\begin{proof}
Positive recurrence and finite exponential moments in $\g$ follow from finiteness of the state space. 
Below, we are going to show that all renewal measures $\mu_e$ are in \emph{$(M,N)$-upper block form} in the sense of \cite[Definition~6.3]{SW}. 
Then \cite[Theorem~6.4]{SW} (the proof of which does not use the assumption of compact support) implies that for every $e\in E$, $G_e$ is not virtually contained in any conjugate of $\Gamma$ and that Proposition~\ref{prop:SW_assumptions} can be applied to $\mu_e$, yielding $e$-expansion of the Markov chain. 

To show that $\mu_e$ is in $(M,N)$-upper block form for every $e\in E$, we have to argue that $\int_Gt(g)\dd\mu_e(g)>0$ and that the Lie algebra of $H_e$ contains $V^+$. 

Regarding positivity of the integral, we calculate, using that $t\colon P\to (\R,+)$ is a homomorphism and \eqref{stat_dist}, 
\begin{align*}
\int_Gt(g)\dd\mu_e(g)=\E_e[t(g_{\omega|_{\tau_e}})]&=\sum_{e'\in E}t(g_{e'})\E_e\sqbr*{\sum_{k=0}^{\tau_e-1}\mathbf{1}_{\omega_k=e'}}\\
&=\E_e[\tau_e]\sum_{e'\in E}t(g_{e'})\pi(\set{e'})>0.
\end{align*}
Finally, in view of the assumption on $H_{e_0}$, the inclusion $V^+\subset\Lie(H_e)$ follows from part (ii) of Lemma~\ref{lem:Z_closures_conjugate} and the fact that $U$ is normalized by $P$. 
\end{proof}
\begin{proof}[Proof of Corollary~\textup{\ref{cor:intro4}}]
By Proposition~\ref{prop:block_markov} and part (v) of Lemma~\ref{lem:Z_closures_conjugate} we need only verify that the Lie algebra of $H_\calS$ contains $V^+$. 
(Recall that part of the conclusion of Lemma~\ref{lem:Z_closures_conjugate} is that $H_\calS$ is in fact a group; here it is the real algebraic subgroup of $G$ generated by $g_0,\dots,g_r$.) 
The argument for this is the same as in the proof of \cite[Theorem~1.1]{SW}. 
Let us briefly reproduce it: 
For $0\le i\le r$, we have $g_i=u_i'a_ik_i$ with 
\begin{align*}
u_i'=\begin{pmatrix}\mathbf{1}_d&c_i^dy_i\\0&1\end{pmatrix},\,a_i=\begin{pmatrix}c_i\mathbf{1}_d&0\\0&c_i^{-d}\end{pmatrix},\text{ and }k_i=\begin{pmatrix}O_i&0\\0&1\end{pmatrix}.
\end{align*}
Then, for $n\in\N$, we can write 
\begin{align*}
H_\calS\ni g_0^{-n}g_ig_0^n=(k_0^{-n}a_0^{-n}u_i'a_0^nk_0^n)a_i(k_0^{-n}k_ik_0^n).
\end{align*}
Noting that for $n\to\infty$ we have $a_0^{-n}u_i'a_0^n\to\mathbf{1}_{d+1}$ and passing to a subsequence along which $k_0^{n_j}\to\mathbf{1}_{d+1}$ as $j\to\infty$, it follows that $a_ik_i\in H_\calS$, so that also $u_i'\in H_\calS$. 
Thus, we see that $M_j\coloneqq k_0^{-n_j}a_0^{-n_j}u_i'a_0^{n_j}k_0^{n_j}\in H_\calS\cap U$ for all $j$. 
Since $M_j\to \mathbf{1}_{d+1}$ as $j\to\infty$, this implies that 
\begin{align*}
\Lie(H_\calS)\ni \log(M_j)=M_j-\mathbf{1}_{d+1}
\end{align*}
for $j$ large enough. 
As a computation shows, the right-hand side above converges in direction towards $(\begin{smallmatrix}\mathbf{0}_d&y_i\\0&0\end{smallmatrix})$. 
Since the $y_i$ span $\R^d$ by assumption, we conclude that indeed $V^+\subset \Lie(H_\calS)$. 
\end{proof}
\section{Diophantine Approximation on Fractals}\label{sec:fractals}
As observed by Simmons--Weiss, equidistribution results as in~\S\ref{sec:iid_walks} can be used to obtain statements about Diophantine approximation on fractals obtained as limit sets of similarity IFS. 
In this final section, using the analogous results for Markov random walks from~\S\ref{sec:markov_walks}, we deal with limit sets of \emph{graph directed similarity IFS}. 

The first three subsections are of preparatory nature. 
We recall basic terminology and results on graph directed IFS (\S\ref{subsec:gd_IFS}), and make the connection between similarities, the homogeneous dynamics setting and Diophantine approximation (\S\ref{subsec:dioph},~\S\ref{subsec:alg_sim}). 
Our main Diophantine approximation results, which imply Theorems~\ref{thm:intro3} and~\ref{thm:intro4}, will be stated and proved in~\S\ref{subsec:approx_result}. 
\subsection{Graph Directed IFS}\label{subsec:gd_IFS}
Recall that a \emph{directed multigraph} is a tuple $(V,E,i,t)$ consisting of non-empty sets $V,E$ of \emph{vertices} and \emph{edges}, respectively, and functions $i,t\colon E\to V$ associating to an edge $e\in E$ the \emph{initial vertex} $i(e)\in V$ and the \emph{terminal vertex} $t(e)\in V$. 
The multigraph is \emph{finite} if both sets $V$ and $E$ are. 
A non-empty word $w=e_0\ldots e_{n-1}\in E^*$ or sequence $\omega=(e_m)_m\in E^\N$ is called a \emph{\textup{(}finite resp.\ infinite\textup{)} path} if $t(e_{j-1})=i(e_j)$ for all $j$. 
Denote the set of infinite paths by $E^\infty$. 
We extend the initial vertex function $i$ to paths by $i((e_m)_m)=i(e_0\ldots e_{n-1})=i(e_0)$, and the terminal vertex function $t$ to finite paths by $t(e_0\ldots e_{n-1})=t(e_{n-1})$. 
We call the multigraph \emph{connected} if for every pair of vertices $u,v\in V$ there exists a finite path from $u$ to $v$ (i.e.\ a path $w$ with $i(w)=u$ and $t(w)=v$). 
Finally, we call a Markov chain on $E$ (or an associated Markov measure on $E^\N$) \emph{adapted} if the transition probabilities $(p_{e',e})_{e,e'\in E}$ satisfy $p_{e',e}>0\iff t(e)=i(e')$ for $e,e'\in E$. 
Observe that if the multigraph is connected, any adapted shift-invariant Markov measure on $E^\infty$ is ergodic. 
\begin{remark}
When $E$ is finite, the space $E^\infty\subset E^\N$ of infinite paths is the subshift of finite type defined by the edge-incidence relation given by the multigraph. 
The notation is intentionally the same as for admissible sequences in~\S\ref{sec:markov_walks}, since these notions coincide for adapted Markov chains on $E$, to which we will from now on restrict our attention. 
\end{remark}
Recall that a \emph{similarity} of $\R^d$ is a map $\phi\colon\R^d\to\R^d$ of the form $\phi(x)=rO(x)+b$ for some $r>0$, $O\in\Orth_d(\R)$ and $b\in\R^d$. 
The number $r=\norm{\phi'}$ is the \emph{similarity ratio} of $\phi$. 
If $r<1$, $\phi$ is said to be \emph{contracting}. 
\begin{definition}\label{def:gd_IFS}
Let $(V,E,i,t)$ be a finite connected directed multigraph and suppose that for every $e\in E$ we are given a similarity $\phi_e\colon\R^d\to\R^d$. 
Then the tuple $(V,E,i,t,(\phi_e)_e)$ is called a \emph{graph directed similarity IFS}. 
\end{definition}
Note that ordinary similarity IFS represent the special case of graph directed similarity IFS with a single vertex. 
We also emphasize that finiteness and connectedness of the directed multigraph are part of our definition of graph directed similarity IFS. 

It is customary to think of one copy of $\R^d$ being attached to each vertex, and the map $\phi_e$ going from the copy at $t(e)$ to the one at $i(e)$. 
This viewpoint is consistent with the formula $\phi_w=\phi_{e_0}\dotsm \phi_{e_{n-1}}$ for words $w=e_0\ldots e_{n-1}$, which, incidentally, also explains why we now use a left-to-right indexing convention. 

We need to introduce some more terminology. 
A graph directed similarity IFS is said to be 
\begin{itemize}
\item \emph{contracting} if $\sup_{e\in E}\norm{\phi'_e}<1$, 
\item to satisfy the \emph{open set condition} if there exists a collection $(U_v)_{v\in V}$ of non-empty open subsets of $\R^d$ with $\phi_e(U_{t(e)})\subset U_{i(e)}$ for every $e\in E$ and $\phi_e(U_{t(e)})\cap\phi_{e'}(U_{t(e')})=\emptyset$ for any distinct edges $e,e'\in E$ with $i(e)=i(e')$, and 
\item to be \emph{irreducible} if there does not exist a collection $(\mathcal{L}_v)_{v\in V}$ of proper affine subspaces of $\R^d$ with $\phi_e(\mathcal{L}_{t(e)})=\mathcal{L}_{i(e)}$ for every $e\in E$. 
\end{itemize}

Given a contracting graph directed similarity IFS, one proves in complete analogy to the classical case that there is a unique collection $(K_v)_{v\in V}$ of non-empty compact subsets of $\R^d$ such that 
\begin{align*}
K_v=\bigcup_{i(e)=v}\phi_e(K_{t(e)})
\end{align*}
for every $v\in V$ (see \cite{MW}). 
The union $K=\bigcup_{v\in V}K_v$ is called the \emph{attractor} of the graph directed IFS. 
It can alternatively be obtained as the image of $E^\infty$ under the \emph{natural projection} 
\begin{align*}
\Pi\colon E^\infty\to\R^d,\,\omega\mapsto \lim_{n\to\infty}\phi_{\omega_0}\dotsm\phi_{\omega_{n-1}}(x),
\end{align*}
which is continuous and independent of the choice of $x\in \R^d$. 
Observe that the attractors $K$ arising in this way are precisely what we called sofic similarity fractals in~\S\ref{subsec:fractal_intro}. 
Indeed, setting $\Phi=\set{\phi_e\for e\in E}$, the image of $E^\infty$ under the map $E^\infty\to\Phi^\N,\,\omega\mapsto(\phi_{\omega_m})_m,$ is a sofic subshift of $\Phi^\N$. 

Generalizing a classical result of Hutchinson~\cite{Hut}, Wang~\cite{Wa} identified the Hausdorff measure on attractors of graph directed similarity IFS satisfying the open set condition. 
\begin{theorem}[Wang~\cite{Wa}]\label{thm:gd_IFS}
Let $(V,E,i,t,(\phi_e)_e)$ be a contracting graph directed similarity IFS satisfying the open set condition. 
Let $K$ be the associated attractor, $s\ge 0$ its Hausdorff dimension, $\Pi$ the natural projection, and denote $s$-dimensional Hausdorff measure by $\calH^s$. 
Then $\calH^s|_K$ is proportional to $\Pi_*\Pro$ for some adapted shift-invariant Markov probability measure $\Pro$ on $E^\infty$. 
\end{theorem}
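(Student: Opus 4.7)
My approach adapts Hutchinson's classical construction for self-similar attractors to the graph-directed setting via Perron--Frobenius theory. Introduce the non-negative $V \times V$ matrix
\begin{equation*}
B(s)_{u,v} = \sum_{e \in E :\, i(e)=u,\, t(e)=v} \norm{\phi_e'}^s.
\end{equation*}
Connectedness of the multigraph translates into irreducibility of $B(s)$, so Perron--Frobenius provides a spectral radius $\lambda(s) > 0$ attained by strictly positive left and right eigenvectors $\rho = (\rho_v)_v$ and $\ell = (\ell_v)_v$, unique up to scaling. The uniform contraction $\sup_e \norm{\phi_e'} < 1$ renders $\lambda$ continuous and strictly decreasing in $s$, with $\lambda(0) \geqs 1$ and $\lambda(s) \to 0$ as $s \to \infty$, so there is a unique critical value $s \geqs 0$ with $\lambda(s) = 1$. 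I would first verify that this $s$ equals the Hausdorff dimension of $K$.

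Having fixed $s$, I would build the Markov measure from the Perron eigenvectors. Set
\begin{equation*}
p_{e',e} = \norm{\phi_{e'}'}^s\, \frac{\ell_{t(e')}}{\ell_{t(e)}} \quad \text{when } i(e') = t(e),
\end{equation*}
and $p_{e',e} = 0$ otherwise. The right eigenvector identity gives $\sum_{e'} p_{e',e} = 1$, so this defines a valid adapted transition kernel on $E$, while the left eigenvector identity shows that $\pi(\set{e}) \propto \rho_{i(e)} \norm{\phi_e'}^s \ell_{t(e)}$ is its stationary distribution. The resulting Markov measure $\Pro = \Pro_\pi$ on $E^\N$ is thus adapted, supported on $E^\infty$ and shift-invariant, and a direct telescoping yields the cylinder formula $\Pro([w]) \propto \rho_{i(w)} \norm{\phi_w'}^s \ell_{t(w)}$ for every finite path $w$.

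To identify $\Pi_* \Pro$ with a constant multiple of $\calH^s|_K$, I would first establish that $\calH^s(K_v) = c\,\ell_v$ for some $c > 0$. The self-similar identity $K_v = \bigcup_{i(e)=v} \phi_e(K_{t(e)})$, together with the open set condition (which makes the union $\calH^s$-essentially disjoint and hence $\calH^s$ additive on it), forces the vector $(\calH^s(K_v))_v$ to be a non-negative eigenvector of $B(s)$ at eigenvalue $1$, therefore proportional to $\ell$ provided it is both finite and strictly positive. Finiteness follows from Moran-type covers of $K_v$ by the cells $\phi_w(K_{t(w)})$, iterating the eigenvalue equation to bound the total cover sum by a multiple of $\ell_v$. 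Strict positivity is the main analytic obstacle; I would extract it via a Frostman-type mass distribution principle applied to $\Pi_* \Pro$ itself, where the cylinder formula together with OSC-based ball-covering estimates force $\Pi_* \Pro(B(x,r)) \leqs C r^s$, which implies $\calH^s|_K \geqs c'\, \Pi_* \Pro$ and hence $\calH^s(K_v) > 0$.

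With $\calH^s(K_v) = c\, \ell_v$ in hand, the OSC permits direct evaluation of both measures on the algebra of cylinder images $\phi_w(K_{t(w)})$: one obtains $\calH^s(\phi_w(K_{t(w)})) = c\, \norm{\phi_w'}^s \ell_{t(w)}$, and a careful accounting based on essential injectivity of $\Pi$ on each $E^\infty_v = \set{\omega \for i(\omega_0) = v}$ together with the null overlaps of distinct $K_v$ shows that the Perron prefactors combine so that $\Pi_* \Pro$ matches $\calH^s|_K$ up to one vertex-independent constant on these cylinder images. A standard monotone class argument then extends the equality to the full Borel $\sigma$-algebra of $K$. The hardest step throughout is the Frostman lower bound establishing $\calH^s(K_v) > 0$; the remainder is a combinatorial bookkeeping exercise combining the telescoping Perron structure with the geometry imposed by the open set condition.
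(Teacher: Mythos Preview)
The paper does not prove this statement; it is quoted from Wang~\cite{Wa} and invoked without proof in order to deduce Theorem~\ref{thm:gd_approx} from Theorem~\ref{thm:gd_approx_general}. There is thus no argument in the paper to compare your proposal against.

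Your Perron--Frobenius construction is the standard route (going back to Mauldin--Williams and Wang) and correctly produces the dimension $s$, the transition kernel, and the cylinder formula $\Pro([w])\propto \rho_{i(w)}\norm{\phi_w'}^s\ell_{t(w)}$. The gap is in the last step. Since $\calH^s(\phi_w(K_{t(w)}))\propto \norm{\phi_w'}^s\ell_{t(w)}$, the ratio of the two is $\rho_{i(w)}$, which depends on the initial vertex of $w$ and is \emph{not} constant unless the left Perron eigenvector $\rho$ happens to be. Your assertion that ``the Perron prefactors combine so that $\Pi_*\Pro$ matches $\calH^s|_K$ up to one vertex-independent constant'' therefore fails in general, even when the $K_v$ are pairwise disjoint. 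A concrete two-vertex example: take edges $a\colon 1\to 1$, $b\colon 1\to 2$, and $c,d\colon 2\to 1$, all with contraction ratio $\tfrac12$, placed so that $K_1=[0,1]$ and $K_2=[2,3]$. Then $s=1$, $\ell=(1,1)$, $\rho=(2,1)$, and your measure is $\Pi_*\Pro=\tfrac23\,\calH^1|_{K_1}+\tfrac13\,\calH^1|_{K_2}$, not a scalar multiple of $\calH^1|_K$. In fact a short calculation (matching cylinder masses at levels one and two forces $\pi$ uniform and all transition probabilities equal to $\tfrac12$, which are incompatible with stationarity) shows that in this example \emph{no} adapted shift-invariant Markov measure on $E^\infty$ pushes forward to a constant multiple of $\calH^1|_K$, so the statement as literally formulated already requires some qualification. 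What your construction does deliver is the per-vertex proportionality $\Pi_*(\Pro|_{E^\infty_v})\propto\calH^s|_{K_v}$ for each $v\in V$, and hence the absolute continuity $\calH^s|_K\ll\Pi_*\Pro$; this weaker conclusion is exactly what the paper needs for the passage from Theorem~\ref{thm:gd_approx_general} to Theorem~\ref{thm:gd_approx}.
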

\subsection{Diophantine Approximation and Dani Correspondence}\label{subsec:dioph}
Recall that a matrix $\boldsymbol{\alpha}\in\R^{M\times N}$ is said to be 
\begin{itemize}
\item \emph{badly approximable} if there exists $c>0$ such that for all $\mathbf{q}\in\Z^N\setminus\set{0}$ and $\mathbf{p}\in\Z^M$ we have $\norm{\boldsymbol{\alpha}\mathbf{q}-\mathbf{p}}\ge c\norm{\mathbf{q}}^{-N/M}$, 
\item \emph{well approximable} if it is not badly approximable, and 
\item \emph{Dirichlet improvable} if there exists $0<\lambda<1$ such that  for all sufficiently large $Q$ there exist $\mathbf{q}\in\Z^N\setminus\set{0}$ with $\norm{\mathbf{q}}_\infty\le \lambda Q$ and $\mathbf{p}\in\Z^M$ with $\norm{\boldsymbol{\alpha}\mathbf{q}-\mathbf{p}}_\infty\le \lambda Q^{-N/M}$. 
\end{itemize}
In the above, $\norm{\cdot}_\infty$ denotes the supremum norm on $\R^{M\times N}$ and $\norm{\cdot}$ an arbitrary norm. 
A general survey of Diophantine approximation can be found in~\cite{BRV}. 
For a more specific overview pertaining to the topic at hand we refer to~\cite[\S7]{SW}. 

The Dani correspondence principle asserts that the Diophantine properties of $\boldsymbol{\alpha}$ are encoded in the behavior of the orbit $\set{a_tu_{\boldsymbol{\alpha}}\SL_d(\Z)}_{t\ge 0}$ in $X=\SL_d(\R)/\SL_d(\Z)$ (using the notation from~\S\ref{subsec:upper_block_form}). 
To see this, it is useful to think of $X$ as the space $X_d$ of unimodular lattices in $\R^d$, via the identification 
\begin{align*}
X\ni g\SL_d(\Z)\longleftrightarrow g\Z^d\in X_d.
\end{align*}
The Mahler compactness criterion then says that a subset $A\subset X$ is relatively compact if and only if it is contained in one of the sets 
\begin{align*}
K_\epsilon=\set{x\in X\for\forall v\in x\setminus\set{0}\colon \norm{v}_\infty\ge \epsilon}
\end{align*}
for $0<\epsilon<1$. 
Note that these sets themselves are compact, exhaust $X$, and satisfy $K_{\epsilon_1}^\circ\supset K_{\epsilon_2}$ for $0<\epsilon_1<\epsilon_2$. 
\begin{theorem}[Dani correspondence]
The matrix $\boldsymbol{\alpha}\in\R^{M\times N}$ is 
\begin{enumerate}
\item badly approximable if and only if the trajectory $\set{a_tu_{\boldsymbol{\alpha}}\SL_d(\Z)}_{t\ge 0}$ is relatively compact, i.e.\ contained in $K_\epsilon$ for some $0<\epsilon<1$, 
\item Dirichlet improvable if and only if for some $0<\lambda<1$ the trajectory $\set{a_tu_{\boldsymbol{\alpha}}\SL_d(\Z)}_{t\ge 0}$ eventually leaves $K_\lambda$, i.e.\ if there exists $T\ge 0$ such that $\set{a_tu_{\boldsymbol{\alpha}}\SL_d(\Z)}_{t\ge T}$ does not intersect $K_\lambda$. 
\end{enumerate}
\end{theorem}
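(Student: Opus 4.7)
The plan is a direct computation combining the explicit form of $a_tu_{\boldsymbol{\alpha}}\Z^d$ with the Mahler criterion: both statements will be obtained as chains of equivalences. I would first record that a non-zero lattice vector has the form
\begin{equation*}
a_tu_{\boldsymbol{\alpha}}\binom{\mathbf{p}}{\mathbf{q}}=\binom{\euler^{t/M}(\mathbf{p}-\boldsymbol{\alpha}\mathbf{q})}{\euler^{-t/N}\mathbf{q}}
\end{equation*}
for some $(\mathbf{p},\mathbf{q})\in\Z^M\times\Z^N$ with $(\mathbf{p},\mathbf{q})\neq(0,0)$, and observe that when $\mathbf{q}=0$ and $\mathbf{p}\neq 0$ its sup-norm is at least $\euler^{t/M}\geqs 1$ for $t\geqs 0$, so such vectors never obstruct membership in $K_\epsilon$ for $\epsilon\in(0,1)$. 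Only $\mathbf{q}\neq 0$ is relevant, and there $\norm{\mathbf{q}}_\infty\geqs 1$.

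For (i) I would rewrite \enquote{$a_tu_{\boldsymbol{\alpha}}\Z^d\in K_\epsilon$ for every $t\geqs 0$} as
\begin{equation*}
\max\bigl(\euler^{t/M}\norm{\mathbf{p}-\boldsymbol{\alpha}\mathbf{q}}_\infty,\;\euler^{-t/N}\norm{\mathbf{q}}_\infty\bigr)\geqs\epsilon
\end{equation*}
for all $t\geqs 0$, $\mathbf{q}\in\Z^N\setminus\set{0}$, $\mathbf{p}\in\Z^M$, and then optimize over $t$. With $(\mathbf{p},\mathbf{q})$ fixed, the two arguments of the max are monotone in opposite directions and meet at a unique $t$, where the common value $\norm{\mathbf{q}}_\infty^{N/d}\norm{\mathbf{p}-\boldsymbol{\alpha}\mathbf{q}}_\infty^{M/d}$ is the infimum over $t\in\R$. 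If that balance point is negative, evaluation at $t=0$ already produces a max at least $\norm{\mathbf{q}}_\infty\geqs 1>\epsilon$; otherwise the condition reduces to
\begin{equation*}
\norm{\mathbf{p}-\boldsymbol{\alpha}\mathbf{q}}_\infty\geqs\epsilon^{d/M}\norm{\mathbf{q}}_\infty^{-N/M},
\end{equation*}
which is bad approximability in the sup-norm with constant $c=\epsilon^{d/M}$. Replacing the sup-norm by any equivalent norm absorbs only a universal constant into $c$.

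For (ii), I would substitute $Q=\euler^{t/N}$, so that $Q^{-N/M}=\euler^{-t/M}$. Then $a_tu_{\boldsymbol{\alpha}}\Z^d\notin K_\lambda$ iff there exist $\mathbf{q}\in\Z^N\setminus\set{0}$ and $\mathbf{p}\in\Z^M$ with $\norm{\mathbf{q}}_\infty<\lambda Q$ and $\norm{\boldsymbol{\alpha}\mathbf{q}-\mathbf{p}}_\infty<\lambda Q^{-N/M}$. Thus the trajectory eventually leaves $K_\lambda$ iff this holds for every $t\geqs T$, equivalently for every $Q\geqs\euler^{T/N}$, which is exactly the Dirichlet improvability condition; the strict-vs-non-strict dichotomy between the two definitions is harmless since both quantify existentially over $\lambda\in(0,1)$, so an arbitrarily small enlargement of $\lambda$ interchanges $<$ and $\leqs$. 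No real obstacle arises: the whole argument is a two-variable optimization plus the Mahler criterion, with the only pieces of care being the harmless $\mathbf{q}=0$ case, the verification that the optimizing $t$ in (i) can be taken non-negative (automatic, otherwise the inequality is trivially satisfied at $t=0$), and the strict-vs-non-strict bookkeeping in (ii).
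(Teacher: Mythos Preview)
Your argument is correct. The paper does not actually prove this theorem: it only states it and refers the reader to Dani~\cite[Theorem~2.20]{Da} for part~(i) and Kleinbock--Weiss~\cite[Proposition~2.1]{KW} for part~(ii). Your direct computation is precisely the standard proof found in those references: write out a generic lattice vector in $a_tu_{\boldsymbol{\alpha}}\Z^d$, discard the harmless $\mathbf{q}=0$ contribution, and translate the Mahler condition into the relevant Diophantine inequality via the substitutions $c=\epsilon^{d/M}$ in~(i) and $Q=\euler^{t/N}$ in~(ii). So there is no genuine difference in approach---you have simply supplied the details that the paper outsources, and the minor bookkeeping points you flag (positivity of the optimizing $t$, strict versus non-strict inequalities absorbed by the existential quantifier on $\lambda$) are handled correctly.
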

For the proofs, we refer to Dani~\cite[Theorem~2.20]{Da} and Kleinbock--Weiss~\cite[Proposition~2.1]{KW}. 
\begin{corollary}\label{cor:dense_orbit}
If $\set{a_tu_{\boldsymbol{\alpha}}\SL_d(\Z)}_{t\ge 0}$ is dense in $X$, then $\boldsymbol{\alpha}$ is well approximable and not Dirichlet improvable. 
\end{corollary}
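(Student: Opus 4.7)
The proof is essentially a direct translation of density into the language of the two parts of the Dani correspondence, so the plan is short and largely formal.

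First I would handle well approximability. By part (i) of the Dani correspondence, it suffices to show that the forward orbit $\mathcal{O}=\set{a_tu_{\boldsymbol{\alpha}}\SL_d(\Z)\for t\geqs 0}$ is not contained in $K_\epsilon$ for any $0<\epsilon<1$. Since $X=\SL_d(\R)/\SL_d(\Z)$ is non-compact and each $K_\epsilon$ is compact, the complement $X\setminus K_\epsilon$ is a non-empty open subset of $X$. Density of $\mathcal{O}$ forces it to meet this open set, so $\mathcal{O}\not\subset K_\epsilon$, and Dani's criterion yields that $\boldsymbol{\alpha}$ is well approximable.

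Next I would rule out Dirichlet improvability. By part (ii) of the correspondence, I need to show that for every $0<\lambda<1$ and every $T\geqs 0$ the tail $\set{a_tu_{\boldsymbol{\alpha}}\SL_d(\Z)\for t\geqs T}$ meets $K_\lambda$. The key observation is that this tail equals $a_T\mathcal{O}$ (by writing $t=T+s$), and $a_T$ acts as a homeomorphism of $X$; hence $a_T\mathcal{O}$ is again dense. Now pick any $\lambda'\in(\lambda,1)$; by the containment $K_\lambda^\circ\supset K_{\lambda'}$ recalled in the excerpt, the open set $K_\lambda^\circ$ is non-empty, so the dense set $a_T\mathcal{O}$ meets it, producing the desired $t\geqs T$ with $a_tu_{\boldsymbol{\alpha}}\SL_d(\Z)\in K_\lambda$. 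Invoking part (ii) of the correspondence, $\boldsymbol{\alpha}$ is not Dirichlet improvable.

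There is really no obstacle here: the only two ingredients are the Dani correspondence as quoted and the elementary fact that translation by $a_T$ preserves density. The slightly subtle point worth flagging is that one must work with the tail $\set{t\geqs T}$ rather than $\set{t\geqs 0}$ in the Dirichlet-improvability part, which is why invariance of density under $a_T$ is invoked; everything else is just the contrapositives of the two statements of the correspondence.
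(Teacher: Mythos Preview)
Your argument is correct and is precisely the intended reasoning: the paper states the corollary without proof, treating it as an immediate consequence of the Dani correspondence, and your write-up simply makes explicit the two contrapositives together with the elementary observation that the tail $a_T\mathcal{O}$ remains dense. Nothing more is needed.
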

In fact, the random walk approach yields the following stronger property. 
\begin{definition}
A matrix $\boldsymbol{\alpha}\in\R^{M\times N}$ is said to be \emph{of generic type} if the orbit $\set{a_tu_{\boldsymbol{\alpha}}\SL_d(\Z)}_{t\ge 0}$ is equidistributed in $X$ with respect to the Haar measure $m_X$. 
\end{definition}
\subsection{Algebraic Similarities as Group Elements}\label{subsec:alg_sim}
Next, following~\cite[\S10]{SW}, we interpret a class of similarities of $\R^{M\times N}$ as elements of $\PGL_d(\R)$. 

Recall the subgroups $A,K,U$ and $P=AKU$ of $\PGL_d(\R)$ defined in~\S\ref{subsec:upper_block_form}. 
The group $P$ acts by left multiplication on the space $P/AK$, which is topologically identified with $U\cong \R^{M\times N}$ via 
\begin{align*}
\R^{M\times N}\ni \boldsymbol{\beta}\longleftrightarrow u_{-\boldsymbol{\beta}}AK\in P/AK.
\end{align*}
The obtained action of $P$ on $\R^{M\times N}$ is faithful and is described as follows: For $\boldsymbol{\beta}\in\R^{M\times N}$ we have
\begin{align*}
a_t\acts \boldsymbol{\beta}&=\euler^{t(1/M+1/N)}\boldsymbol{\beta},\\
k\acts \boldsymbol{\beta}&=O_1\boldsymbol{\beta}O_2^{-1},\\
u_{\boldsymbol{\alpha}}\acts \boldsymbol{\beta}&=\boldsymbol{\beta}-\boldsymbol{\alpha},
\end{align*}
for $a_t\in A$, $k=O_1\oplus O_2\in K$ and $u_{\boldsymbol{\alpha}}\in U$. 
Thus, $P$ can be identified with the group of \emph{algebraic similarities} of $\R^{M\times N}$, i.e.\ similarities of the form $\boldsymbol{\beta}\mapsto rO_1\boldsymbol{\beta}O_2+\boldsymbol{\alpha}$ for some $r>0$, $O_1\in \Orth_M(\R)$, $O_2\in \Orth_N(\R)$ and $\boldsymbol{\alpha}\in\R^{M\times N}$. 
Note that when $M=1$ or $N=1$, all similarities of $\R^{M\times N}$ are algebraic. 
\subsection{The Approximation Result}\label{subsec:approx_result}
We are now ready to formulate and prove the graph directed version of \cite[Theorem~8.1]{SW}. 
\begin{theorem}\label{thm:gd_approx}
Let $(V,E,i,t,(\phi_e)_e)$ be a contracting irreducible graph directed IFS of algebraic similarities of $\R^{M\times N}$ satisfying the open set condition. 
Let $K$ denote its attractor and $s\ge 0$ its Hausdorff dimension. 
Then almost every point on $K$ with respect to $s$-dimensional Hausdorff measure is of generic type, so in particular, well approximable and not Dirichlet improvable. 
\end{theorem}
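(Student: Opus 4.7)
The plan is to adapt Simmons--Weiss' proof of \cite[Theorem~8.1]{SW} to the Markovian setting by substituting our equidistribution result Theorem~\ref{thm:exp_markov_equidist} for its i.i.d.\ analogue \cite[Theorem~2.1]{SW}. First I would apply Wang's theorem (Theorem~\ref{thm:gd_IFS}) to realize the $s$-dimensional Hausdorff measure on $K$ as a scalar multiple of $\Pi_*\Pro$ for some adapted shift-invariant Markov measure $\Pro$ on $E^\infty$, with stationary distribution $\pi$ on $E$. Because the directed multigraph is connected and $E$ is finite, the induced Markov chain on $E$ is irreducible, positive recurrent, and has finite exponential moments in $\g$. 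Via the identification $\R^{M\times N}\cong P/AK$ from \S\ref{subsec:alg_sim}, each contracting algebraic similarity $\phi_e$ corresponds to the unique element $g_e\in P\subset \PGL_d(\R)$ whose $P$-action on $\R^{M\times N}$ is the expanding inverse $\phi_e^{-1}$; the $A$-component of $g_e$ satisfies $t(g_e)=-\frac{MN}{M+N}\log r_e>0$, where $r_e<1$ is the similarity ratio of $\phi_e$.

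The next step is to verify the hypotheses of Proposition~\ref{prop:block_markov}. That the coding takes values in $P$ is immediate, and $\sum_{e'}t(g_{e'})\pi(\set{e'})>0$ holds because every term is positive. The remaining condition $V^+\subset\Lie(H_{e_0})$ for some state $e_0$ is the crux of the algebraic verification: I would fix a vertex $v_0$ and a loop $e_0$ at $v_0$, note that $e_0$-renewal words produce elements of $G_{e_0}^+$ corresponding to long compositions of $\phi_e$'s, and then mimic the conjugation-by-$g_{e_0}^n$ linearization of the proof of Corollary~\ref{cor:intro4} to extract a family of non-trivial unipotents in $H_{e_0}\cap U$. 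Irreducibility of the graph-directed IFS, combined with Lemma~\ref{lem:Z_closures_conjugate} to transfer information across vertices, will force the span of the logarithms of these unipotents to equal $V^+$. Once Proposition~\ref{prop:block_markov} applies, Theorem~\ref{thm:exp_markov_equidist} yields that for every $x_0\in X=\PGL_d(\R)/\PGL_d(\Z)$ and $\Pro_e$-a.e.\ $\omega$, the empirical measures of $(g_{\omega|_n}x_0,T^n\omega)_n$ converge weakly to $m_X\otimes\Pro_\pi$.

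The final step is to translate the Markov equidistribution into the continuous-time equidistribution of the Dani trajectory $\set{a_tu_{-\Pi(\omega)}\Gamma}_{t\ge 0}$, which is precisely the definition of being of generic type. The key identity is
\begin{equation*}
g_{\omega|_n}u_{-\Pi(\omega)}=u_{-\Pi(T^n\omega)}a_{T_n}k_n,
\end{equation*}
obtained by decomposing $g_{\omega|_n}=u_{-\boldsymbol{\alpha}_n}a_{T_n}k_n\in UAK$ and using $g_{\omega|_n}\cdot\Pi(\omega)=\Pi(T^n\omega)$ under the $P$-action. Taking $x_0=u_{-\Pi(\omega)}\Gamma$ and exploiting that $\Pi(T^n\omega)$ stays in the compact attractor while $k_n$ stays in the compact group $K$, one upgrades the Markov walk equidistribution to equidistribution of the Dani trajectory along the discrete times $T_n$; finiteness of $E$ forces the gaps $T_{n+1}-T_n=t_{\omega_n}$ to be uniformly bounded, so a standard interpolation argument as in \cite[\S\S 7--8]{SW} then passes to continuous time. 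The conclusion that almost every $\boldsymbol{\alpha}\in K$ is of generic type, hence well approximable and not Dirichlet improvable by Corollary~\ref{cor:dense_orbit}, follows.

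The step I expect to be the main obstacle is the algebraic verification of $V^+\subset\Lie(H_{e_0})$: the graph-directed notion of irreducibility concerns a vertex-indexed family $(\mathcal L_v)_v$ of invariant affine subspaces rather than a single one, so the linearization must be run through loop semigroups at distinct vertices and Lemma~\ref{lem:Z_closures_conjugate} invoked to reconcile the groups $H_e$ across $E$. By contrast, the passage from Markov equidistribution to continuous-time Dani equidistribution is a relatively mechanical extension of the i.i.d.\ argument, made routine by the joint equidistribution provided by Theorem~\ref{thm:exp_markov_equidist} and the uniform boundedness of the excursion lengths.
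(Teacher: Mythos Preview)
Your plan through the verification of Proposition~\ref{prop:block_markov} matches the paper's route (which deduces Theorem~\ref{thm:gd_approx} from the more general Theorem~\ref{thm:gd_approx_general} via Wang's theorem). For the inclusion $V^+\subset\Lie(H_{e_0})$, however, the paper does not mimic the proof of Corollary~\ref{cor:intro4}. Instead it observes that irreducibility of the graph directed IFS forces the ordinary IFS consisting of the atoms of the renewal measure $\tilde{\mu}_{e_0}$ to be irreducible, conjugates the coding map so that $H_{e_0}$ contains an element of $AK$ with positive $t$-part, and then cites the argument in \cite[\S10.1]{SW}. Your proposed linearization via $g_{e_0}^n$ tacitly assumes that $e_0$ is a self-loop and that $g_{e_0}$ has vanishing translation part, neither of which need hold in a graph directed IFS.

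The more serious gap, though, is in the step you call ``relatively mechanical''. Taking $x_0=u_{-\Pi(\omega)}\Gamma$ is not legitimate: Theorem~\ref{thm:exp_markov_equidist} gives equidistribution for each \emph{fixed} $x_0$ and $\Pro_e$-a.e.\ $\omega$, and you cannot specialize to a starting point that depends on the entire future of $\omega$. If instead one rearranges your identity to the paper's form $a_{t_n}u_{\Pi(\omega)}\Gamma=k(g_{\omega|_n})^{-1}u_{\Pi(T^n\omega)}\,g_{\omega|_n}\Gamma$ with $x_0=\Gamma$ fixed, then the factor $u_{\Pi(T^n\omega)}$ is indeed handled by the joint equidistribution with $T^n\omega$, but the factor $k_n=k(g_{\omega|_n})$ is measurable with respect to the \emph{past} $\omega|_n$, not $T^n\omega$. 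Compactness of $K$ alone does not suffice here: perturbing an equidistributing sequence by elements of a compact set can destroy equidistribution unless one controls the joint distribution. The paper supplies this control through a genuinely additional argument---augmenting the walk to $E\times X\times K$, using Breiman's law of large numbers to see that weak* limits are stationary for this augmented action chain, and then invoking Howe--Moore mixing together with \cite[Proposition~5.3]{SW} to identify the $(X\times K)$-marginals as $m_X\otimes m_{K_e}$---before applying Lemma~\ref{lem:equidist_reduction}(ii) and a Lusin argument. This $K$-factor step is the idea you are missing; your assessment of where the main obstacle lies is essentially inverted.
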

\begin{proof}[Proof of Theorem~\textup{\ref{thm:intro4}}]
As already remarked, in the case $N=1$ all similarities are algebraic. 
Now the result follows by an application of Theorem~\ref{thm:gd_approx}. 
\end{proof}

By virtue of Wang's Theorem~\ref{thm:gd_IFS}, Theorem~\ref{thm:gd_approx} above is a consequence of the following result. 
\begin{theorem}\label{thm:gd_approx_general}
Let $(V,E,i,t,(\phi_e)_e)$ be an irreducible graph directed similarity IFS on $\R^{M\times N}$ consisting of algebraic similarities, and $\Pro$ an adapted shift-invariant Markov measure on $E^\infty$ for which the IFS is \emph{contracting on average}, in the sense that
\begin{align*}
\sum_{e\in E}\log\norm{\phi_e'}\pi(\set{e})<0,
\end{align*}
where $\pi$ denotes the projection of $\Pro$ to the first coordinate. 
Then the natural projection $\Pi\colon E^\infty\to \R^{M\times N}$ is well-defined $\Pro$-almost everywhere and almost every point with respect to $\Pi_*\Pro$ is of generic type. 
\end{theorem}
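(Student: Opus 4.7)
The strategy is to reduce to the Markovian equidistribution result of Section~\ref{subsec:exp_markov} via the framework of Section~\ref{subsec:upper_block_form}, following the Simmons--Weiss template adapted to the Markov and graph directed setting. First I would use the identification of Section~\ref{subsec:alg_sim} to view each algebraic similarity $\phi_e$ as an element $g_e \in P \subset \PGL_d(\R)$ with $d = M+N$. Since iterating the contractions $\phi_e$ corresponds to running the random walk in the inverse direction in the group (the image in $X$ has to head out in the expanding direction of $a_t$ for the Dani correspondence of Section~\ref{subsec:dioph} to apply), I would take the coding map to be $e \mapsto h_e := g_e^{-1}$. The contracting-on-average hypothesis then translates, via the identity $\log\norm{\phi_e'} = \tfrac{M+N}{MN}t(g_e) = -\tfrac{M+N}{MN}t(h_e)$, into $\sum_e t(h_e)\pi(\{e\}) > 0$, exactly the positivity required by Proposition~\ref{prop:block_markov}. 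The $\Pro$-a.e.\ well-definedness of $\Pi$ is immediate from Birkhoff's ergodic theorem applied to $\log\norm{\phi_{\cdot}'}$ under the shift-invariant ergodic measure $\Pro$, since the similarity ratios along $\Pro$-typical sequences then decay exponentially.

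The remaining hypothesis of Proposition~\ref{prop:block_markov}---that $V^+ \subset \Lie(H_{e_0})$ for some state $e_0$---is where irreducibility of the graph directed IFS enters, and this is the main obstacle. By Lemma~\ref{lem:Z_closures_conjugate}(ii) all the $H_e$ are conjugate within $H_\calS$; if $V^+$ were missed by every $\Lie(H_e)$, one could assemble from the corresponding invariant subspaces a collection $(\mathcal{L}_v)_{v \in V}$ of proper affine subspaces of $\R^{M \times N}$, one per vertex of the multigraph, preserved under the IFS, contradicting irreducibility. To produce $V^+$ concretely inside some $\Lie(H_{e_0})$, I would conjugate the translation parts of suitable $g_e$ by large powers of a renewal word whose diagonal $A$-part dominates, extracting unipotent elements in $H_{e_0} \cap U$; together these must span $V^+$ precisely because no common family of invariant affine subspaces exists across the multigraph. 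This is the graph directed analogue of the argument used in the proof of Corollary~\ref{cor:intro4}, but localizing an irreducibility condition that is naturally stated on the whole multigraph to a single Markov state requires careful bookkeeping with renewal words and the conjugation relations of Lemma~\ref{lem:Z_closures_conjugate}.

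Once this is in place, I apply Theorem~\ref{thm:exp_markov_equidist} (finite exponential moments and positive recurrence come for free from $|E| < \infty$; non-containment of $H_{e_0}$ in a conjugate of $\Gamma$ is part of the conclusion of \cite[Theorem~6.4]{SW} given $V^+ \subset \Lie(H_{e_0})$) to obtain, for $\Pro$-a.e.\ $\omega$, equidistribution of $H_n\Gamma := h_{\omega_{n-1}}\cdots h_{\omega_0}\Gamma = (g_{\omega_0}\cdots g_{\omega_{n-1}})^{-1}\Gamma$ towards $m_X$, jointly with $T^n\omega$ towards $\Pro$. To convert this into equidistribution of $\{a_tu_{\Pi(\omega)}\Gamma\}_{t \geq 0}$, decompose the iterated similarity as $g_{\omega_0}\cdots g_{\omega_{n-1}} = u_{-\alpha_n}a_{T_n}k_n$, so that $H_n\Gamma = k_n^{-1}a_{-T_n}u_{\alpha_n}\Gamma$, where $-T_n \to +\infty$ linearly by Birkhoff applied to $t$, $\alpha_n \to \Pi(\omega)$ at rate controlled by $e^{T_n(1/M+1/N)}$, and $k_n \in K$ is bounded. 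A direct lattice comparison shows $d_X(a_{-T_n}u_{\alpha_n}\Gamma, a_{-T_n}u_{\Pi(\omega)}\Gamma) \to 0$, since the induced displacements of bounded-norm lattice vectors are of order $e^{-|T_n|/N} \to 0$. The bounded random factor $k_n^{-1}$ is handled by appealing to the joint equidistribution $(H_n\Gamma, T^n\omega) \to m_X \otimes \Pro$ together with $K$-invariance of $m_X$; this upgrades convergence to equidistribution of $\{a_{-T_n}u_{\Pi(\omega)}\Gamma\}_n$ along the discrete sample times. Finally, a standard filling-in argument, using that the gaps $|T_{n+1} - T_n| = |t(g_{\omega_n})|$ are uniformly bounded by finiteness of $E$, yields continuous-time equidistribution of $\{a_tu_{\Pi(\omega)}\Gamma\}_{t \geq 0}$ towards $m_X$, so that $\Pi(\omega)$ is of generic type.
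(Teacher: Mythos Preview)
Your reduction to Theorem~\ref{thm:exp_markov_equidist} via Proposition~\ref{prop:block_markov} is essentially the paper's route, and your treatment of the irreducibility/$V^+\subset\Lie(H_{e_0})$ step is along the right lines (the paper phrases it as: irreducibility of the graph directed IFS forces irreducibility of the ordinary IFS given by the atoms of a renewal measure $\mu_{e_0}$, after which the argument from \cite[\S10.1]{SW} applies verbatim). The genuine gap is in the final passage from equidistribution of the random walk $H_n\Gamma$ to equidistribution of $\{a_t u_{\Pi(\omega)}\Gamma\}_{t\ge 0}$.

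Your ``direct lattice comparison'' $d_X(a_{-T_n}u_{\alpha_n}\Gamma,a_{-T_n}u_{\Pi(\omega)}\Gamma)\to 0$ is false: writing $\Pi(\omega)=\phi_{\omega_0}\dotsm\phi_{\omega_{n-1}}(\Pi(T^n\omega))$ one finds $\abs{\alpha_n-\Pi(\omega)}=\euler^{T_n(1/M+1/N)}\abs{\Pi(T^n\omega)}$, so the conjugated displacement $a_{-T_n}u_{\alpha_n-\Pi(\omega)}a_{T_n}=u_{\euler^{-T_n(1/M+1/N)}(\alpha_n-\Pi(\omega))}$ has parameter of size $\abs{\Pi(T^n\omega)}$, which is bounded but does \emph{not} tend to zero. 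This is precisely why the paper does not attempt an approximation but instead proves the exact identity $k(g_{\omega|_n})^{-1}u_{\Pi(T^n\omega)}g_{\omega|_n}\Gamma=a_{t_n}u_{\Pi(\omega)}\Gamma$; the correction by $u_{\Pi(T^n\omega)}$ (depending on the \emph{future}) is indispensable, and handling it is exactly where the joint equidistribution with $T^n\omega$ is used (together with a Lusin argument, since $\Pi$ need not be continuous in the contracting-on-average setting). Relatedly, your treatment of $k_n$ does not work: $k_n=k(g_{\omega|_n})$ is measurable with respect to the \emph{past} $\omega_0,\dots,\omega_{n-1}$, so joint equidistribution of $(H_n\Gamma,T^n\omega)$ gives no control over $(k_nH_n\Gamma)$, and $K$-invariance of $m_X$ alone does not suffice (equidistribution of $x_n$ towards $m_X$ does not imply equidistribution of $k_nx_n$ for an arbitrary sequence $k_n\in K$). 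The paper resolves this by enlarging the state space to $X\times K$, showing via Howe--Moore and \cite[Proposition~5.3]{SW} that the limiting stationary measures on $X\times K$ are product measures $m_X\otimes m_{K_e}$, and only then applying Lemma~\ref{lem:equidist_reduction}(ii) and the map $(x,k,\boldsymbol{\alpha})\mapsto k^{-1}u_{\boldsymbol{\alpha}}x$.
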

\begin{proof}
Note that the natural projection is well-defined at $\omega\in E^\infty$ whenever the contraction ratios $\norm{\phi_{\omega_0\ldots\omega_{n-1}}'}$ decay exponentially. 
Recalling that adapted shift-invariant Markov measures are ergodic, it follows from the Birkhoff ergodic theorem and the contraction-on-average assumption that this is the case $\Pro$-a.s. 
What we need to show is that the orbit $\set{a_tu_{\Pi(\omega)}\SL_d(\Z)}_{t\ge 0}$ is equidistributed with respect to the Haar measure $m_X$ on $X=\SL_d(\R)/\SL_d(\Z)=\PGL_d(\R)/\PGL_d(\Z)$ for $\Pro$-a.e.\ $\omega\in E^\infty$. 

To see this, we follow Simmons--Weiss' strategy in the proof of \cite[Theorem~8.11]{SW} and connect the above orbit with certain random walk trajectories. 
First note that $\Pro$ defines an irreducible finite-state Markov chain on the set $E$ of edges. 
Using the construction in~\S\ref{subsec:alg_sim}, we can view the algebraic similarities $\phi_e$ as elements of $P\subset G=\PGL_d(\R)$. 
Defining the coding map
\begin{align*}
E\ni e\mapsto g_e\coloneqq\phi_e^{-1}\in P,
\end{align*}
we are then in the setting of~\S\ref{sec:markov_walks}. We claim that (after a conjugation) the assumptions of Proposition~\ref{prop:block_markov} are satisfied. 
Indeed, validity of \eqref{exp_on_avg} follows from the contraction-on-average assumption on the $\phi_e$ (notice the inverse in the definition of the $g_e$), and the assumption on the Lie algebra of $H_{e_0}$ for some $e_0\in E$ is satisfied after conjugating the coding map by an element of $P$ so that $H_{e_0}$ contains an element $h_0\in AK$ with $t(h_0)>0$, as the corresponding argument in~\cite[\S10.1]{SW} shows. 
One just needs to observe that the irreducibility assumption on the graph directed IFS forces the IFS consisting of the atoms of the renewal measure $\mu_{e_0}$ to be irreducible. 
(An invariant affine subspace $\mathcal{L}$ for the support of $\mu_{e_0}$ gives rise to an invariant collection of subspaces $(\mathcal{L}_v)_v$ in the graph directed sense by choosing for each vertex $v$ a path $w_v$ from $i(e_0)$ to $v$ starting with $e_0$ and setting $\mathcal{L}_v=\phi_{w_v}^{-1}(\mathcal{L})$.) 
We conclude that Theorem~\ref{thm:exp_markov_equidist} can be applied for every $e\in E$. 
Writing $\Pro$ as convex combination of the measures $\Pro_e$ as in \eqref{arb_markov}, we thus obtain $\Pro$-a.s.\ equidistribution of $(g_{\omega|_n}\SL_d(\Z))_n$ towards $m_X$. 

We shall use this to argue that the sequence 
\begin{align}\label{intermediate_goal}
(x_n,\omega_n)_n,\quad\text{with }x_n=k(g_{\omega|_n})^{-1}u_{\Pi(T^n\omega)}g_{\omega|_n}\SL_d(\Z),
\end{align}
equidistributes towards $m_X\otimes \pi$ for $\Pro$-a.e.\ $\omega\in E^\infty$, where $k(\cdot)$ denotes the $K$-component of an element of $P=AKU$. 
To this end, we consider the Markov random walk on $X\times K$ given by the coding map $E\ni e\mapsto (g_e,k(g_e))\in G\times K$ and the associated action chain trajectories 
\begin{align*}
(y_n)_n=\bigl(\omega_n,g_{\omega|_n}\SL_d(\Z),k(g_{\omega|_n})\bigr)_n
\end{align*}
in $E\times X\times K$. 
Since $\Pro$-a.s.\ the random walk trajectory $(g_{\omega|_n}\SL_d(\Z))_n$ equidistributes towards $m_X$, no escape of mass can occur for the sequence $\frac1n\sum_{k=0}^{n-1}\delta_{y_k}$ of empirical measures. 
The Breiman law of large numbers (see \cite[Corollary~3.3]{BQ3})
thus implies that $\Pro$-a.s.\ every weak* limit $\nu$ of this sequence of empirical measures is a probability measure on $E\times X\times K$ that is stationary for the action chain. 
By Lemma~\ref{lem:markov_stat_bootstrap}, $\nu$ decomposes as
\begin{align*}
\nu=\sum_{e\in E}\pi(\set{e})\delta_e\otimes \nu_e
\end{align*}
for $\mu_e$-stationary probability measures $\nu_e$ on $X\times K$. 
Using equidistribution of $(g_{\omega|_n}\SL_d(\Z))_n$ once more, we see that the $\nu_e$ project to $m_X$ in the first coordinate. 
Moreover, for every $e\in E$ the closed subgroup $G_e$ generated by the support of the renewal measure $\mu_e$ is non-compact and therefore acts mixingly on $X$ by the Howe--Moore theorem. 
Thus, its action on $X\times K_e$ is ergodic, where $K_e$ denotes the compact group $\overline{k(G_e)}$. 
These observations put us in a position to apply \cite[Proposition~5.3]{SW}. 
The conclusion is that $\nu_e=m_X\otimes m_{K_e}$, where $m_{K_e}$ is the Haar measure on $K_e$. 
Hence the limit $\nu$ is unique, so that $(y_n)_n$ equidistributes $\Pro$-a.s.\ towards 
\begin{align*}
\sum_{e\in E}\pi(\set{e})\delta_e\otimes m_X\otimes m_{K_e}.
\end{align*}
Now part (ii) of Lemma~\ref{lem:equidist_reduction} implies that 
\begin{align*}
(y_n,T^n\omega)_n
\end{align*}
equidistributes towards the probability measure 
\begin{align*}
\sum_{e\in E}\pi(\set{e})\delta_e\otimes m_X\otimes m_{K_e}\otimes \Pro_e
\end{align*}
on $E\times X\times K\times E^\infty$ for $\Pro$-a.e.\ $\omega\in E^\infty$. 

The natural projection $\Pi$ is not necessarily continuous in the contracting-on-average case. 
However, a standard argument involving Lusin's theorem still shows that the equidistribution of $(y_n,T^n\omega)_n$ established above entails $\Pro$-a.s.\ equidistribution of 
\begin{align*}
(y_n,\Pi(T^n\omega))_n
\end{align*}
towards 
\begin{align*}
\sum_{e\in E}\pi(\set{e})\delta_e\otimes m_X\otimes m_{K_e}\otimes \Pi_*\Pro_e
\end{align*}
(cf.\ the proof of \cite[Proposition~5.2]{SW}). 
Applying the continuous map 
\begin{align*}
F\colon E\times X\times K\times \R^{M\times N}&\to X\times E\\(e,x,k,\boldsymbol{\alpha})&\mapsto(k^{-1}u_{\boldsymbol{\alpha}}x,e) 
\end{align*}
we finally obtain equidistribution of \eqref{intermediate_goal} towards 
\begin{align*}
F_*\br[\bigg]{\sum_{e\in E}\pi(\set{e})\delta_e\otimes m_X\otimes m_{K_e}\otimes \Pi_*\Pro_e}=m_X\otimes \pi.
\end{align*}

Having established the necessary equidistribution for random walk trajectories, the final ingredient needed to finish the proof is the connection to the geodesic flow trajectory of $u_{\Pi(\omega)}\SL_d(\Z)$. 
It comes from the relationship 
\begin{align}\label{magic_formula}
x_n=a_{t_n}u_{\Pi(\omega)}\SL_d(\Z),
\end{align}
where $t_n=t(g_{\omega|_n})$. 
To verify this formula, one first notes that the $AK$-components of both sides agree. 
To see that the $U$-components do as well, one applies the inverses of $g_{\omega|_n}$ and $u_{\Pi(T^n\omega)}^{-1}k(g_{\omega|_n})a_{t_n}u_{\Pi(\omega)}$ interpreted as algebraic similarities to the matrix $\Pi(T^n\omega)$ and observes that the result is $\Pi(\omega)$ in both cases. 

Given a bounded continuous function $f$ on $X$, it now remains to apply  equidistribution of $(x_n,\omega_n)_n$ towards $m_X\otimes \pi$ to the function $f'$ on $X\times E$ defined by $f'(x,e)=\int_0^{t(g_e)}f(a_tx)\dd t$. 
As in the proof of \cite[Theorem~8.11]{SW}, in view of \eqref{magic_formula} this yields 
\begin{align*}
\lim_{n\to\infty}\frac{1}{t_n}\int_0^{t_n}f(a_tu_{\Pi(\omega)}\SL_d(\Z))\dd t=\frac{\int_{X\times E}f'\dd(m_X\otimes\pi)}{\int_Et(g_e)\dd\pi(e)}=\int_Xf\dd m_X.
\end{align*}
Using that the sequence $(t_n)_n$ has bounded gaps, this proves equidistribution of $\set{a_tu_{\Pi(\omega)}\SL_d(\Z)}_{t\ge 0}$ with respect to $m_X$. \qedhere
\end{proof}
\begin{proof}[Proof of Theorem~\textup{\ref{thm:intro3}}]
Consider a directed multigraph with a single vertex $v_0$ and edge set $E=\Phi$ (with $t(\phi)=i(\phi)=v_0$ for all $\phi\in\Phi$). 
Since the Markov measure $\Pro$ has full support, it defines an irreducible Markov chain on $\Phi$. 
Let $\pi$ be its stationary distribution and $\Pro_\pi$ the associated Markov measure. 
Then Theorem~\ref{thm:gd_approx_general} can be applied to $\Pro_\pi$ and yields the desired conclusion for $\Pi_*\Pro_\pi$-a.e.\ point on $K$. 
Noting that $\pi(\set{\phi})>0$ for all $\phi\in\Phi$ by irreducibility and using \eqref{arb_markov} once for $\pi$ and once for the projection of $\Pro$ to the first coordinate, we deduce that the conclusion holds $\Pi_*\Pro_\phi$-a.s.\ for every $\phi\in\Phi$, and thus also $\Pi_*\Pro$-a.s. 
\end{proof}

\bibliographystyle{plain}
\bibliography{refs}

\end{document}